\newtheorem{theo}{Theorem}[section]
\newtheorem{lemm}[theo]{Lemma}
\newtheorem{prop}[theo]{Proposition}
\newtheorem*{theo*}{Theorem}
\theoremstyle{definition}
\newtheorem{defi}[theo]{Definition}
\newtheorem{cons}[theo]{Construction}
\newtheorem{nota}[theo]{Notation}
\newtheorem{exam}[theo]{Example}
\newtheorem{rem}[theo]{Remark}
\numberwithin{equation}{section}
\newcommand{\op}{^{\mathrm{op}}}
\newcommand{\cat}{\mathsf}
\newcommand{\on}{\operatorname}
\newcommand{\id}{\mathrm{id}}
\newcommand{\Hom}{\mathrm{Hom}}
\newcommand{\End}{\mathsf{End}}
\newcommand{\Aut}{\mathsf{Aut}}
\newcommand{\Fun}{\on{Fun}}
\newcommand{\map}{\on{map}}
\newcommand{\ehom}{\underline{\on{Hom}}}
\newcommand{\bring}{\cat{BRing}}
\newcommand{\cbring}{\cat{cBRing}}
\renewcommand{\ring}{\cat{Ring}}
\newcommand{\cring}{\cat{cRing}}
\newcommand{\bin}{\mathrm{Bin}}
\newcommand{\Z}{\mathbf{Z}}
\newcommand{\Q}{\mathbf{Q}}
\newcommand{\F}{\mathbf{F}}
\newcommand{\R}{\mathbb{R}}
\newcommand{\tG}{\tilde{\mathbb{G}}_a}
\newcommand{\sym}{\on{Sym}}
\title{Binomial rings and homotopy theory}
\author{Geoffroy Horel}
\begin{document}

\address{Université Sorbonne Paris Nord, Laboratoire Analyse, Géométrie et Applications, CNRS (UMR 7539), 93430, Villetaneuse, France.}
\email{horel@math.univ-paris13.fr}

\begin{abstract}
We produce a fully faithful functor from finite type nilpotent spaces to cosimplicial binomial rings, thus giving an algebraic model of integral homotopy types. As an application, we construct an integral version of the Grothendieck-Teichmüller group.
\end{abstract}

\keywords{}

\subjclass[2010]{55N10, 55P60, 55P20}

\maketitle

The goal of this paper is to develop an integral version of Sullivan's rational homotopy theory. Recall that Sullivan proved the following theorem.

\begin{theo*}[\cite{sullivaninfinitesimal}]
There is a functor $\Omega^*_{PL}$ from the homotopy category of simplicial sets to the opposite of the homotopy category of commutative differential graded algebras which is a left adjoint and whose right adjoint is denoted $A\mapsto \langle A\rangle$. When restricted to simplicial sets $X$ that are nilpotent and of finite type, the unit of this adjunction
\[X\to \langle \Omega^*_{PL}(X)\rangle\]
is a model for the localization with respect to rational homology isomorphisms.
\end{theo*}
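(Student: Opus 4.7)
The plan is to construct $\Omega^*_{PL}$ via Sullivan's polynomial de Rham functor and then identify the unit map through a Postnikov tower argument. First I would define the simplicial CDGA $A^{\bullet}_{PL}$, where $A^n_{PL} = \Q[t_0,\dots,t_n,dt_0,\dots,dt_n]/(\sum t_i-1,\sum dt_i)$ is the CDGA of polynomial differential forms on the standard $n$-simplex, with face and degeneracy maps induced from $\Delta^{\bullet}$. Then set $\Omega^*_{PL}(X) := \Hom_{\cat{sSet}}(X,A^{\bullet}_{PL})$, regarded as a CDGA via the pointwise product, and define the right adjoint by $\langle A\rangle_n := \Hom_{\on{CDGA}}(A,A^n_{PL})$. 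The adjunction is formal from the bi-representing role of $A^{\bullet}_{PL}$.

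Second, I would establish the polynomial de Rham theorem: the natural map $H^*(\Omega^*_{PL}(X)) \to H^*(X;\Q)$ obtained by comparison with PL-cochains is an isomorphism. The cleanest proof is acyclic models: both functors are cohomology theories on simplicial sets that satisfy Mayer-Vietoris and agree on simplices via a polynomial Poincaré lemma (an explicit contracting homotopy for $d$ on $A^n_{PL}$). This already shows that the unit $X \to \langle \Omega^*_{PL}(X)\rangle$ is a $\Q$-cohomology isomorphism for all $X$.

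Third, for the homotopical content of the theorem, I would run a Postnikov tower induction. The base case is $X = K(\Q,n)$: one computes $\Omega^*_{PL}(K(\Q,n))$ to be quasi-isomorphic to the free CDGA on a single generator in degree $n$, and then checks directly that $\langle \sym(\Q[-n])\rangle \simeq K(\Q,n)$ using that $A^{\bullet}_{PL}$ resolves $\Q$ in each degree. For the induction step, a principal fibration $K(\Q,n) \to E \to B$ in the Postnikov tower of the $\Q$-localization $X_\Q$ maps under $\Omega^*_{PL}$ to a Hirsch (relative Sullivan) extension, and $\langle-\rangle$ sends this back to the corresponding principal fibration, giving the inductive step. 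Combining this with the finite type and nilpotence hypotheses (which ensure the Postnikov tower of $X_\Q$ has finitely generated $\Q$-vector space fibers at each stage, so one can iterate $K(\Q,n)$ a finite number of times), one concludes that $\langle\Omega^*_{PL}(X)\rangle \simeq X_\Q$ compatibly with the unit.

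The main obstacle is the inductive step, which requires careful control of how $\Omega^*_{PL}$ and $\langle-\rangle$ interact with principal fibrations and with minimal Sullivan models. One needs to know that the unit map is compatible with the Postnikov tower, that $\Omega^*_{PL}$ converts finite-type nilpotent Postnikov towers to convergent Sullivan models, and that $\langle-\rangle$ preserves the relevant homotopy fiber sequences. The nilpotence and finite-type assumptions are essential here: without them, either the minimal model may fail to exist with finite-dimensional generating spaces in each degree, or the Postnikov convergence needed to pass from the stagewise statement to the global one breaks down.
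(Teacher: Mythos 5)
This statement is quoted in the paper purely as background, attributed to \cite{sullivaninfinitesimal}, and the paper gives no proof of it; so your argument can only be measured against the standard literature (Sullivan, Bousfield--Gugenheim), and your outline is indeed the standard route: the simplicial CDGA $A^{\bullet}_{PL}$, the polynomial de Rham theorem, and a Postnikov induction starting from Eilenberg--MacLane spaces. Two points need attention. A minor one: the adjunction you write down is a point-set adjunction between simplicial sets and CDGAs, whereas the statement is about homotopy categories; to descend it you must derive $\langle-\rangle$ by first taking a cofibrant (Sullivan) replacement, which requires setting up the Bousfield--Gugenheim model structure rather than treating minimal models as an afterthought in step three.

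The serious gap is the sentence closing your second step: the de Rham theorem does \emph{not} ``already show'' that the unit $X\to\langle\Omega^*_{PL}(X)\rangle$ is a $\Q$-cohomology isomorphism for all $X$. The de Rham theorem computes $H^*(\Omega^*_{PL}(X))$; it says nothing about $H^*(\langle\Omega^*_{PL}(X)\rangle;\Q)$, and relating the latter to the former is precisely the content of the theorem and precisely where finite type is used. Concretely, for $X=K(V,n)$ with $V$ an infinite-dimensional $\Q$-vector space, $\Omega^*_{PL}(X)$ has a Sullivan model generated by the full dual $V^{\vee}$ in degree $n$, its realization is a $K(V^{\vee\vee},n)$, and the unit is controlled by the non-surjective double-dual map $V\to V^{\vee\vee}$; it is not a $\Q$-cohomology equivalence. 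So the cohomological statement cannot be front-loaded; it has to be proved stage by stage inside the Postnikov induction, under the nilpotence and finite type hypotheses. Relatedly, what you label ``the main obstacle'' --- that $\Omega^*_{PL}$ takes a principal fibration to a relative Sullivan extension, that $\langle-\rangle$ takes it back to the corresponding principal fibration, and that the resulting towers converge --- is not an obstacle to the proof but \emph{is} the proof; as written, the proposal identifies where the work lies without carrying it out.
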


This theorem implies in particular the following formula for the rational homotopy groups of a pointed space $X$ which is nilpotent and of finite type
\[\pi_i(X)\otimes\Q\cong \pi_i \langle \Omega^*_{PL}(X)\rangle.\]
In this formula, the $-\otimes\Q$ functor is the left adjoint to the inclusion of nilpotent groups in nilpotent uniquely divisible groups (in particular it is indeed given by simply tensoring with $\Q$ when restricted to abelian groups). The functor $\Omega^*_{PL}$ is a piecewise linear version of algebraic differential forms, it is quasi-isomorphic to the standard singular cochains functors.

It is very natural to try to extend Sullivan's theorem from rational homotopy theory to integral homotopy theory. There is no integral version of Sullivan's functor and in fact it can be shown that no strictly commutative model for integral cochains can exist as this would imply the triviality of Steenrod operations. Nevertheless, the singular cochain functor lands in $E_\infty$-algebras. There is in fact a more primitive invariant taking values in cosimplicial commutative rings given by taking degreewise the Hom from a simplicial set to the integers. 

Viewing the singular cochain functor as a functor to $E_\infty$-algebras in cochain complexes or cosimplicial commutative ring, there are precise analogues of Sullivan's theorem due to Mandell and To\"en respectively.

\begin{theo*}[Mandell \cite{mandellcochains}, To\"en \cite{toenprobleme}]
Let $X\mapsto C^*(X)$ (resp. $X\mapsto \Z^X$) be the singular cochain functor from the homotopy category of simplicial sets to the opposite of the homotopy category of $E_{\infty}$-differential graded algebras (resp. cosimplicial commutative rings). This functor is a left adjoint. The right adjoint is denoted $A\mapsto \langle A\rangle$. When restricted to simplicial sets $X$ that are nilpotent and of finite type, this functor is faithful. Moreover, two simplicial sets that are nilpotent and of finite type $X$ and $Y$ are weakly equivalent if and only if $C^*(X)$ and $C^*(Y)$ (resp. $\Z^X$ and $\Z^Y$) are weakly equivalent as $E_{\infty}$-differential graded algebras (resp. cosimplicial commutative rings).
\end{theo*}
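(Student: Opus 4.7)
The plan is to reduce the entire theorem to the single claim that, for every nilpotent finite-type simplicial set $X$, the unit map $X \to \langle \Z^X\rangle$ (resp.\ $X \to \langle C^*(X)\rangle$) is a weak equivalence. Granting this, faithfulness follows because the map $X \to \langle C^*(Y)\rangle$ corresponding to $f\colon X \to Y$ under the adjunction is $\eta_Y \circ f$, and post-composing with the equivalence $\eta_Y$ is injective on homotopy classes; similarly the detection of weak equivalences follows from $X \simeq \langle \Z^X\rangle \simeq \langle \Z^Y\rangle \simeq Y$.

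To set up the adjunction I would construct the right adjoint explicitly by the formula $\langle A\rangle_n := \Hom_{\cbring}(A, \Z^{\Delta^n})$; the natural bijection $\Hom_{s\cat{Set}}(X,\langle A\rangle) \cong \Hom_{\cbring}(A,\Z^X)$ then follows by writing $X$ as a colimit of standard simplices and applying Yoneda. The analogous construction in the $E_\infty$-DGA setting is Mandell's. Upgrading to a Quillen adjunction requires installing a Reedy-type (or projective) model structure on $\cbring$ in which cofibrant objects have a manageable description, after which checking the Quillen condition is routine.

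The main content is the unit equivalence for nilpotent finite-type $X$, which I would attack by a double reduction. First, induction along the Postnikov tower: a principal $K(A,n)$-fibration $X \to X_{n-1}$ with $A$ finitely generated abelian is sent by $\Z^{(-)}$ to a homotopy cofiber sequence of cosimplicial rings, and $\langle-\rangle$ sends this back to a fiber sequence of spaces; comparing long exact sequences of homotopy groups via the five-lemma reduces the claim to the base case $X = K(A,n)$. Second, an arithmetic fracture argument: using the homotopy pullback $X \simeq X_\Q \times_{(\prod_p X_p^{\wedge})_\Q} \prod_p X_p^{\wedge}$ valid for nilpotent finite-type $X$, together with the fact that both $\Z^{(-)}$ and $\langle-\rangle$ respect this decomposition, reduces the base case to separate rational and $p$-adic computations on $K(A,n)$.

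The rational side is Sullivan's theorem, transported from CDGAs to cosimplicial $\Q$-algebras via the normalization/Dold--Kan comparison. The $p$-adic side --- the assertion that the cosimplicial $\F_p$-algebra $\F_p^{K(A,n)}$ (equivalently, the $E_\infty$-algebra of mod-$p$ cochains together with its Steenrod action) determines $K(A,n)^{\wedge}_p$ --- is the main obstacle and is the technical heart of Mandell's paper. It proceeds via a detailed analysis of $H^*(K(A,n);\F_p)$ as an algebra over the Steenrod algebra, leveraging Lannes's theory of unstable modules to recover the space from its cochain algebra; To\"en's contribution is to translate this analysis into the cosimplicial language, which yields the parenthetical form of the statement.
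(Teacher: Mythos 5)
There is a genuine gap, and it sits at the very first step: you reduce the whole theorem to the claim that the unit $X\to\langle \Z^X\rangle$ (resp. $X\to\langle C^*(X)\rangle$) is a weak equivalence for nilpotent finite-type $X$. That claim is false in both the $E_\infty$-algebra and the cosimplicial commutative ring settings, and its failure is precisely the point of the theorem being stated in this weaker form. If the unit were an equivalence, then the adjunction isomorphism $[C^*(Y),C^*(X)]\cong[X,\langle C^*(Y)\rangle]$ together with post-composition by $\eta_Y$ would make the cochain functor \emph{fully} faithful; but Mandell proves it is faithful and \emph{not} full, and the surrounding text of this paper emphasizes exactly this ("the sticking point shared by these two theorems is the lack of fullness"). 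Concretely, the unit already fails to be an equivalence on Eilenberg--MacLane spaces: by To\"en's Corollaire 5.1 (cited in Section \ref{section : binomial homotopy type}), the point is the only simply connected finite-type space for which the affinization agrees with the space, and the Remark after Theorem \ref{theo : monadic} records that mapping spaces in $\cring$ between objects $\Z^X$ do not recover mapping spaces of spaces. Making the unit an equivalence is what the binomial structure of the present paper is designed to achieve (via the cobar computation for $\on{Num}[x]$ rather than $\Z[x]$); it is not available for plain $E_\infty$ or cosimplicial ring cochains. Consequently your Postnikov induction also breaks at its base case, since $\langle C^*(K(A,n))\rangle\not\simeq K(A,n)$.

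The actual arguments of Mandell and To\"en must therefore establish faithfulness and conservativity \emph{without} fullness. Roughly: base change of $C^*(X;\Z)$ along $\Z\to\Q$ recovers the Sullivan rationalization, and base change along $\Z\to\overline{\F}_p$ (not $\F_p$ --- with $\F_p$-coefficients the unit again fails to be an equivalence because of the Frobenius, which is why Mandell needs the algebraically closed field) recovers the $p$-completion; a quasi-isomorphism or a map of integral cochain algebras then induces compatible equivalences/maps on all these localizations, and one reassembles $X$ and the map via the arithmetic fracture square, taking care of the compatibility over the "adelic" corner. This is where the real technical work lies, and it cannot be shortcut by the unit equivalence you posit. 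Your last two paragraphs gesture at the right ingredients (fracture square, Sullivan rationally, Lannes-theoretic input $p$-adically), but as organized the proof rests on a false intermediate statement.
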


The sticking point shared by these two theorems is the lack of fullness of the cochain functor. In the present paper, we suggest a way to fix this problem by attaching more structure to the functor $X\mapsto \Z^X$. We make the observation that the cosimplicial commutative ring $\Z^X$ is degreewise a binomial ring. Binomial rings are rings in which generalized binomial coefficients are well-defined. Alternatively, they are torsion-free lambda rings in which the identity map is a Frobenius lift at all prime. Our main result is then the following.

\begin{theo*}
There exists a model structure on cosimplicial binomial rings in which the weak equivalences are the quasi-isomorphisms. Let us denote by $c\mathcal{BR}ing$ the resulting $\infty$-category. The functor
\[X\mapsto \Z^X\]
from $\mathcal{S}$ to $c\mathcal{BR}ing\op$ is a left adjoint. The right adjoint is given by the formula
\[A\mapsto \map_{c\mathcal{BR}ing}(A,\Z).\]
The unit of this adjunction is equivalent to the map $X\to \Z_{\infty}(X)$ from $X$ to its Bousfield-Kan $\Z$-completion for $X$ connected of finite type. In particular, this map is a weak equivalence for $X$ nilpotent of finite type.
\end{theo*}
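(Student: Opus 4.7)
My plan is to proceed in three stages: construct the model structure, establish the Quillen adjunction, and identify the derived unit with the Bousfield--Kan $\Z$-completion. For the model structure, I would transfer the standard model structure on cosimplicial abelian groups (with quasi-isomorphisms as weak equivalences) along the forgetful functor $U\colon c\cat{BRing}\to c\cat{Ab}$; its left adjoint is the levelwise free binomial ring functor, which exists because $\cat{BRing}$ is monadic over $\mathsf{Set}$. A transfer theorem of Quillen--Crans type then reduces the problem to checking that free extensions of generating trivial cofibrations remain quasi-isomorphisms. Torsion-freeness of free binomial rings is useful here: quasi-isomorphisms of torsion-free cosimplicial abelian groups are detected rationally, where binomial rings become ordinary $\Q$-algebras and the verification is classical.

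For the adjunction, at the $1$-categorical level, for a set $X$ and a binomial ring $A$ one has the identification
\[\Hom_{\cat{BRing}}(A,\Z^X)=\Hom_{\cat{BRing}}(A,\Z)^X=\Hom_{\mathsf{Set}}\bigl(X,\Hom_{\cat{BRing}}(A,\Z)\bigr),\]
which exhibits $\Z^{(-)}\colon\mathsf{Set}\to\cat{BRing}\op$ as left adjoint to $\Hom_{\cat{BRing}}(-,\Z)$. Extending this levelwise yields the required adjunction between $\mathcal{S}$ and $c\mathcal{BR}ing\op$. The Quillen property follows because $\Z^{(-)}$ sends monomorphisms of simplicial sets to levelwise surjections of cosimplicial binomial rings, hence to fibrations in the transferred model structure.

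The central task is identifying the derived unit $X\to\map_{c\mathcal{BR}ing}(\Z^X,\Z)$ with $X\to\Z_\infty(X)$. The plan is to compute the derived mapping space by resolving $\Z^X$ using the bar construction associated to the comonad $F\circ U$ on $c\cat{BRing}$, where $F$ denotes the free binomial ring functor; this produces a simplicial resolution $(FU)^{\bullet+1}(\Z^X)\to\Z^X$ by levelwise free and hence cofibrant objects. Applying $\map_{c\cat{BRing}}(-,\Z)$ and using the free/forgetful adjunction expresses the derived unit as the totalization of a cosimplicial simplicial set, which I would then compare with $\Z_\bullet X$, the classical cosimplicial space whose totalization is $\Z_\infty(X)$. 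The main obstacle is this comparison: the two cosimplicial objects come from different monads and are not termwise identical, so one must either exhibit a natural zigzag of cosimplicial maps inducing an equivalence on totalizations, or invoke a universal property characterizing $\Z$-completion as the terminal $\Z$-homology isomorphism into a $\Z$-complete target. Granting this identification, the final statement follows immediately from the classical Bousfield--Kan theorem that $X\to\Z_\infty(X)$ is a weak equivalence for $X$ nilpotent of finite type.
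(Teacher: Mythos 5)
There are two genuine gaps. First, in the construction of the model structure, your justification of the transfer hypothesis rests on the claim that quasi-isomorphisms of torsion-free cosimplicial abelian groups are detected rationally. This is false: the cochain complex $\Z\xrightarrow{\;2\;}\Z$ is torsion-free and rationally acyclic but has $H^1=\Z/2$. So checking that pushouts of free images of generating trivial cofibrations are quasi-isomorphisms cannot be reduced to a rational computation; one would have to control mod~$p$ homology of free binomial rings on disks as well. The paper avoids this entirely by transferring via the path-object argument (Schwede--Shipley): all objects of $\cat{cAb}$ are fibrant and $A\mapsto A^{I}$ is a functorial path object in $\cbring$, which is all the transfer theorem needs.

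Second, and more seriously, the identification of the derived unit with $X\to\Z_\infty(X)$ --- which is the entire content of the theorem --- is left as an acknowledged ``obstacle'' with no concrete plan. The comonadic resolution $(FU)^{\bullet+1}(\Z^X)$ yields a cosimplicial space with no evident termwise or zigzag comparison to $\Z_\bullet X$, and the ``universal property'' fallback would itself require proving that $\map_{c\mathcal{BR}ing}(\Z^X,\Z)$ is $\Z$-complete and that the unit is a $\Z$-homology equivalence, neither of which you address. Note that any argument of this shape that does not use binomiality in an essential way must fail: the verbatim statement for cosimplicial \emph{commutative} rings is false, since To\"en's affinization $(X\otimes\Z)^{uni}$ differs from $\Z_\infty(X)$ for every simply connected finite type space other than the point. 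The paper's proof locates the role of binomiality precisely in the computation of the cobar construction of $\on{Num}[x]$ (whose graded dual is the polynomial algebra $\Z[x]$, thanks to the basis $\binom{x}{n}$), which gives the case $K(\Z,1)$; it then inducts on $n$ using bar constructions and a conservativity theorem for the bar construction, propagates to nilpotent finite type spaces through principal Postnikov towers and the Eilenberg--Moore spectral sequence, and finally reaches general connected finite type spaces via Isaksen's cohomological model structure on pro-spaces. None of these ingredients appears in your outline.
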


Following To\"en \cite{toenchamps}, we also make a version of a binomial affine homotopy type attached to a homotopy type. Given a homotopy type $X$, we may construct the functor $X^{bin}$ from the category of binomial rings to the $\infty$-category of spaces given by the formula
\[X^{bin}(R)=\map_{c\mathcal{BR}ing}(\Z^{X},R).\]
We show that the functor $X\mapsto X^{bin}$ is fully faithful when restricted to nilpotent and finite type spaces. Moreover this object recovers the various localizations of $X$. For any subring $R$ of $\Q$, the space $X^{bin}(R)$ is the $R$-localization of $X$ while the space $X^{bin}(\Z_p)$ is the $p$-completion of $X$. This object can be used to define a ``binomial algebraic group'' of homotopy automorphisms of a space. As an application of this theory we construct an integral version of the Grothendieck-Teichmüller group. This is a functor from binomial rings to groups that includes both the pro-algebraic Grothendieck-Teichmüller group and the pro-$p$ Grothendieck-Teichmüller group. 

\subsection*{Related work}

To the best of the author's knowledge, the first occurence of binomial rings in homotopy theory is in \cite{wilkersonapplications}. In this paper, Wilkerson constructs functor from binomial rings to homotopy types which is very likely to be equivalent to our functor $X\mapsto X^{bin}$ constructed in Section \ref{section : binomial homotopy type}. This paper was also very much influenced by the paper \cite{ekedahlminimal} by Ekedahl, in which a model for homotopy types is constructed in the category of simplicial free abelian groups and numerical maps. Although we do not know exactly how to compare the present work to Ekedahl's work we believe that there is a strong relationship. In particular, our Proposition \ref{prop : case n=1} seems to be very similar to the proof of \cite[Lemma 2.7]{ekedahlminimal}. This paper also owes a lot to the papers \cite{toenchamps} and \cite{toenprobleme} that are themselves influenced by Grothendieck's \emph{Pursuing stacks}. In particular, our Proposition \ref{prop : main theorem for EM spaces} is essentially equivalent to \cite[Corollaire 3.4]{toenprobleme}.

In a different direction, the paper \cite{yuanintegral} by Allen Yuan gives a different model of integral homotopy types using cochains valued in the sphere spectrum. Even though there does not seem to be a direct comparison functor between Yuan's model and ours, both rely on the idea of homotopy trivializing the Frobenius map. Finally let us mention the work \cite{blomquistintegral} in which the authors identify simply connected spaces with a full subcategory of coalgebras over a certain comonad on chain complexes. In the current work we identify finite type simply connected spaces with a full subcategory of algebras over a certain monad on cochain complexes (Theorem \ref{theo : monadic}). The additional finite type assumption comes from the fact that we are working cohomologically instead of homologically.

\subsection*{Acknowledgment}

I wish to thank Alexander Berglund and Dan Petersen for helpful conversations.

This work was partially written at Institut Mittag-Leffler in Sweden during the semester \emph{Higher algebraic structures in algebra, topology and geometry}.

\subsection*{Conventions}

We denote by $\mathbb{N}$ the set of non-negative integers. 

We denote ordinary categories in sans-serif characters and the underlying $\infty$-categories of a model category in calligraphic characters. For instance, we denote by $\cbring$ the model category of cosimplicial commutative rings and by $c\mathcal{BR}ing$ the underlying $\infty$-category. We denote by $\mathcal{S}$ the $\infty$-category of spaces.

If $\cat{C}$ is a simplicial category, we denote the mapping simplicial set by $\map_{\cat{C}}$. If $\mathcal{C}$ is an $\infty$-category, we denote by $\map_{\mathcal{C}}$ the infinity-categorical mapping space. 

\section{Binomial rings}

We use the word ring for commutative unital rings. We denote by $\ring$ the category of rings. 

\begin{defi}
A \emph{binomial ring} is a ring $R$ whose underlying abelian group is torsion-free and such that, for all $a\in R$ and $n\in\mathbb{N}$, the product
\[\prod_{i=0}^{n-1}(a-i)\]
is divisible by $n!$.
\end{defi}

\begin{nota}
In the following, for $a$ an element of a binomial ring and $n\in\mathbb{N}$, we write
\[\binom{a}{n}=\frac{\prod_{i=0}^{n-1}(a-i)}{n!}.\]
\end{nota}

\begin{rem}
Binomial rings appear under the name numerical ring and with a different axiomatic in \cite{ekedahlminimal}. The equivalence between the two notions is proved in \cite{xantchabinomial}. We can also alternatively define a binomial ring as torsion free commutative ring in which the identity map is a Frobenius lift for any prime \cite[Theorem 4.1]{elliottbinomial}. As such binomial rings are particular lambda rings. 
\end{rem}

We shall recall in this section some important facts about the category of binomial rings. All of the results can be found in \cite{elliottbinomial}. We denote by $\bring$ the category of binomial rings. Morphisms of binomial rings are, by definition, morphisms of rings. So there is a forgetful functor
\[U:\bring\to\ring\]
which is fully faithful. This functor has a left adjoint denoted $\bin^U$. Explicitly, for a ring $A$, the ring $\bin^U(A)$ is the intersection of all binomial subrings of $A\otimes_{\Z}{\Q}$ containing the image of $A\to A\otimes_{\Z}\Q$. 

The forgetful functor $U$ also has a right adjoint denoted $\bin_U$. Given a ring $A$ the ring $\bin_U(A)$ is the subring of the ring of big Witt vectors $W(A)$ consisting of elements that are fixed by all the Frobeniuses.

The existence of these adjoints implies that the forgetful functor $U$ preserves limits and colimits. In particular, the conditions of Beck's monadicity theorem (see \cite[Chapter 3, Theorem 3.14]{barrwells}) are satisfied and the adjunction
\[\bin^U:\ring\leftrightarrows \bring:U\]
exhibits $\bring$ as the category of algebras over the associated monad.

Given a set $S$, the ring $\bin^U(\Z[S])$ is the ring of numerical polynomials in $S$ variables. This ring is simply the subring of $\Q[S]$ whose elements are the polynomials $f$ in $S$-variables with rational coefficients such that $f(\Z^S)\subset \Z$. Note that this ring is strictly larger than $\Z[S]$. For example, the polynomial
\[F_p(X)=\frac{X^p-X}{p}\]
with $p$ a prime number is a polynomial whose coefficients are not integers but which is such that $F_p(n)\in\Z$ for any $n\in\Z$.

\begin{nota}
We denote by $\on{Num}[S]$ the ring $\bin^U(\Z[S])$ of numerical polynomials in $S$ variables. When the variables are called $x_1,\ldots,x_n$, we simply write $\on{Num}[x_1,\ldots,x_n]$.
\end{nota}

\section{The cobar construction of $\on{Num}[x]$}

In this section, we compute the cohomology of the cobar construction of the ring $\on{Num}[x]$. This computation will play an important role in the proof of our main theorem.

Let $C$ be a coalgebra over $\Z$ which is flat as an abelian group. Assume that we are further given a coaugmentation of $C$, i.e. a map of coalgebras $\Z\to C$. Then the cobar  construction of $C$ is the cochain complex $\Omega(C)$ associated to the cosimplicial object
\[[n]\mapsto C^{\otimes n}\]
in which the inner cofaces are obtained by applying the diagonal map and the outer cofaces are given by the coaugmentation. 

\begin{rem}
A pointed space can be viewed as a coaugmented coalgebra in the category of spaces (the coalgebra structure is given by the diagonal map and the coaugmentation by the base point). In that context, the totalization of the analogous cosimplicial object is a model for the loop space. This explains the notation for the cobar construction.
\end{rem}

The goal of this section is to compute the cobar construction of $\on{Num}[x]$. On top of being a commutative ring, $\on{Num}[x]$ is a cocommutative coalgebra with coproduct 
\[\Delta:\on{Num}[x]\to \on{Num}[x]\otimes \on{Num}[x]\cong \on{Num}[x,y]\]
given by the formula
\[\Delta(f)(x,y)=f(x+y).\]

\begin{prop}\label{prop : cobar of Num}
The cohomology of $\Omega(\on{Num}[x])$ is free of rank $1$ in cohomological degree $0$ and $1$ and is zero in any other degree.
\end{prop}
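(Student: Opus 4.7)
The plan is to identify $\Omega(\on{Num}[x])$ with the graded $\Z$-linear dual of the bar complex of the polynomial ring $\Z[t]$ augmented by $t\mapsto 0$, and then compute the latter using the Koszul resolution of $\Z$ as a $\Z[t]$-module.

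First I would set up a perfect pairing. By Vandermonde's identity the coproduct satisfies $\Delta\binom{x}{n}=\sum_{i+j=n}\binom{x}{i}\otimes\binom{x}{j}$, so $\on{Num}[x]$ is the divided-power coalgebra on one generator. The pairing $\langle t^m,\binom{x}{n}\rangle=\delta_{m,n}$ identifies $\on{Num}[x]$ with the polynomial-degree-graded dual of $\Z[t]$, and converts the coproduct and counit on $\on{Num}[x]$ into the product and augmentation $t\mapsto 0$ on $\Z[t]$. Using the standard basis $\prod_k\binom{x_k}{i_k}$ of $\on{Num}[x_1,\ldots,x_n]$ from \cite[Lemma~2.2]{elliottbinomial}, one has $\on{Num}[x]^{\otimes n}\cong\on{Num}[x_1,\ldots,x_n]$; the pairing extends to tensor powers, and restricted to any fixed total polynomial degree $d$, yields a perfect pairing of finite-rank free abelian groups.

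Next I would match the differentials. The cobar differential on $\Omega(\on{Num}[x])$ is built from the coproduct and counit, while the bar differential on $B_*(\Z[t])$ is built from the product and augmentation, so the two are adjoint under the pairing. Both preserve total polynomial degree, giving decompositions $\Omega^*(\on{Num}[x])=\bigoplus_d\Omega^*_d$ and $B_*(\Z[t])=\bigoplus_d B_*(\Z[t])_d$ with $\Omega^*_d\cong\Hom_{\Z}(B_*(\Z[t])_d,\Z)$. Since these are complexes of finitely generated free abelian groups, $\Z$-linear duality is exact and $H^i(\Omega^*_d)\cong\Hom_{\Z}(H_i(B_*(\Z[t])_d),\Z)$.

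Finally I would compute the bar homology via the Koszul resolution $0\to\Z[t]\xrightarrow{\cdot\,t}\Z[t]\to\Z\to 0$: tensoring with $\Z$ over $\Z[t]$ gives $\mathrm{Tor}^{\Z[t]}_0(\Z,\Z)=\Z$ concentrated in polynomial degree $0$, $\mathrm{Tor}^{\Z[t]}_1(\Z,\Z)=\Z$ concentrated in polynomial degree $1$, and zero in all other bidegrees. Dualizing recovers $H^0(\Omega)=\Z$, $H^1(\Omega)=\Z$ (represented by the primitive element $x\in\on{Num}[x]$), and $H^n(\Omega)=0$ for $n\geq 2$, as claimed. The only place that requires real care is verifying the adjunction between the cobar and bar differentials, i.e.\ bookkeeping the signs and the outer coaugmentations/augmentations — but this is an immediate check once the duality has been set up.
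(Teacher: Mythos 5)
Your proposal is correct and follows essentially the same route as the paper: grade everything by polynomial degree using the Vandermonde identity, identify the degreewise dual of the cobar complex with the bar complex of $\Z[t]$, and read off $\mathrm{Tor}^{\Z[t]}_*(\Z,\Z)$. The only phrase worth tightening is ``$\Z$-linear duality is exact, hence $H^i(\Omega^*_d)\cong\Hom_{\Z}(H_i(B_*(\Z[t])_d),\Z)$'' --- in general one needs the universal coefficient theorem here, and the $\mathrm{Ext}$ term disappears only because the bar homology is free in each bidegree (which it is, so your conclusion stands).
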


\begin{proof}
The coalgebra $\on{Num}[x]$ is, as an abelian group, isomorphic to the direct sum $\oplus_{n\in\mathbb{N}}\binom{x}{n}$ by \cite[Lemma 2.2]{elliottbinomial}. We have the Vandermonde identity which describes the diagonal in $N$~:
\[\Delta\binom{x}{n}=\binom{x+y}{n}=\sum_{p+q=n}\binom{x}{p}\binom{y}{q}.\]
From this formula, we see that the coalgebra $N$ is graded by declaring $\binom{x}{n}$ to be of degree $n$ (beware that, with this convention, the two maps $\Z[x]\to\on{Num}[x]$ and $\on{Num}[x]\to\Q[x]$ are not maps of graded abelian groups).

It follows that the cohomology of the cobar construction of $\on{Num}[x]$ inherits an extra grading on top of the cohomological grading. We shall show that it is free of rank $1$ in bidegrees $(0,0)$ and $(1,1)$ and is zero otherwise (our convention is cohomological degree then internal degree). Since the cochain complex $\Omega(\on{Num}[x])$ is finitely generated and free in each degree, we may apply the duality functor $(-)^{\vee}=\Hom(-,\Z)$ to it and obtain a chain complex of graded abelian groups. We shall prove that the homology of $\Omega(\on{Num}[x])^{\vee}$ is free of rank $1$ in (homological) bidegrees $(0,0)$ and $(1,1)$. By the universal coefficient theorem, this will imply the desired result.

In order to prove this final claim, it suffices to make two observations. First the dual of $\on{Num}[x]$ (in the graded abelian group category) is the graded algebra $\Z[x]$ thanks to the explicit formula that we have for the diagonal. Second, the chain complex $\Omega(\on{Num}[x])^{\vee}$ can be identified immediately with the bar construction of $\on{Num}[x]^{\vee}\cong\Z[x]$. The homology of the bar construction of $\Z[x]$ is well-known to be of the desired form.
\end{proof}

\section{Cosimplicial binomial rings}

We denote by $\cat{cAb}$ the category of cosimplicial abelian groups. Recall that the Dold-Kan equivalence gives an equivalence of categories between $\cat{cAb}$ and $\cat{Ch}^*(\Z)$ the category of non-negatively graded cochain complexes (or non-positively graded chain complexes). This equivalence is realized by the normalized cochain complex functor
\[N:\cat{cAb}\to\cat{Ch}^*(\Z)\]
that sends a cosimplicial abelian group $A^{\bullet}$ to the cochain complex given in degree $n$ by the intersection of the kernels of all the codegeneracy maps.
\[N(A)^n:=\bigcap_{i=0}^n\mathrm{ker}s^i.\]
The differential on $N(A)$ is given by the alternating sum of coface maps. The inverse of this functor is denoted $\Gamma$. In order to simplify notations, we will often omit $\Gamma$ and allow ourselves to view a cochain complex as a cosimplicial abelian group. We shall also use homological grading. So we write $\Z[-n]$ for the cosimplicial abelian group obtained by applying $\Gamma$ to the chain complex given by $\Z$ in cohomological degree $n$ and zero in any other degree.

We denote by $\cring$ the category of cosimplicial rings.  Similarly, we denote by $\cbring$ the category of cosimplicial binomial rings. We extend the functors $U$, $\bin^U$ ann $\bin_U$ to $\cring$ and $\cbring$ by applying them in each cosimplicial degree. We obtain adjunctions
\[\bin^U:\cring\leftrightarrows \cbring:U\]
and
\[U:\cbring\leftrightarrows \cring:\bin_U.\]

\begin{theo}
There is a simplicial model structure on $\cat{cAb}$ in which the weak equivalences are the quasi-isomorphism and the fibrations are the epimorphisms.
\end{theo}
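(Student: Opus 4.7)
The plan is to use the Dold--Kan correspondence to transfer the problem to non-negatively graded cochain complexes. The normalized functor $N \colon \cat{cAb} \to \cat{Ch}^*(\Z)$ is an equivalence that preserves and reflects both quasi-isomorphisms (by definition) and levelwise epimorphisms (using the Eilenberg--Zilber decomposition of a cosimplicial abelian group into its normalized and degenerate summands, which makes $N$ a direct summand functor in each cosimplicial degree). Hence it suffices to build the analogous model structure on $\cat{Ch}^*(\Z)$ and transport it along $N$.

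On $\cat{Ch}^*(\Z)$ I would set up a projective-type model structure whose weak equivalences are the quasi-isomorphisms and whose fibrations are the degreewise surjections, by the small object argument. Let $D^n$ denote the acyclic cochain complex $\Z \xrightarrow{\id} \Z$ concentrated in degrees $n$ and $n+1$, and let $S^n = \Z[-n]$ be the corresponding sphere. Take as generating cofibrations the set $\{S^{n+1} \to D^n\}_{n \geq 0} \cup \{0 \to S^0\}$, and as generating acyclic cofibrations the set $\{0 \to D^n\}_{n \geq 0}$. A direct $\Hom$-computation using the representability of cochains shows that $p \colon X \to Y$ has the right lifting property against every $0 \to D^n$ if and only if $p^n$ is surjective for all $n \geq 0$, so fibrations are exactly the levelwise surjections. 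The remaining axioms---lifting, retracts, two-out-of-three, functorial factorization---proceed as in the classical projective model structure on non-negatively graded chain complexes; the one twist is that in the cochain setting $D^0$ is a legitimate target of an acyclic cofibration, which gives surjectivity of fibrations in \emph{every} cohomological degree rather than only in positive ones. The cofibrations then turn out to be the monomorphisms whose cokernel is degreewise projective.

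For the simplicial enrichment, $\cat{cAb}$ carries its standard structure: cotensoring by a simplicial set $K$ is $(B^K)^n = \prod_{K_n} B^n$, tensoring is the dual coend, and $\map(A,B)_k = \Hom_{\cat{cAb}}(A \otimes \Delta^k, B)$. The pushout-product axiom (SM7) then reduces, by cofibrant generation, to a check on the generators above, which is easy because tensoring with a simplicial set is an exact operation that carries each $D^n$ and $S^n$ to an iterated sum of copies of itself. The main obstacle, modest as it is, is the LLP bookkeeping for the axioms on $\cat{Ch}^*(\Z)$; no new homotopy-theoretic input is required beyond the classical projective model structure on non-negatively graded chain complexes and the Dold--Kan equivalence.
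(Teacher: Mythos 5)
Your proposal is correct and follows essentially the same route as the paper: transfer the projective model structure on non-negatively graded cochain complexes (weak equivalences the quasi-isomorphisms, fibrations the degreewise surjections in \emph{all} degrees) across the Dold--Kan equivalence $N:\cat{cAb}\to\cat{Ch}^*(\Z)$, the paper simply noting in addition that epimorphisms are preserved and reflected by an equivalence of categories. You supply more detail than the paper does (explicit generating sets, the remark that $0\to D^0$ forces surjectivity in degree $0$, and the reduction of SM7 to generators); the only loose point is the parenthetical claim that tensoring by a simplicial set carries $D^n$ and $S^n$ to sums of copies of themselves, which is not literally true for non-discrete $K$, though the standard verification of the pushout-product axiom for (co)simplicial abelian groups goes through and the paper itself does not spell it out either.
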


\begin{proof}
This is simply the projective model structure on cochain complexes of abelian groups transferred along the Dold-Kan equivalence
\[N:\cat{cAb}\leftrightarrows \cat{Ch}^*(\Z):\Gamma\]
It suffices to check that the fibrations are exactly the epimorphisms. This is the case in the projective model structure of cochain complexes and epimorphisms are preserved and reflected along an equivalence of categories. 

This model structure is simplicial. The cotensoring is given by the following formula
\[A^X:=[p]\mapsto (A^p)^{X_p}:=\prod_{X_p}A^p.\]
where $A$ is a cosimplicial abelian group and $X$ is a simplicial set.
\end{proof}

\begin{rem}
In cochain complexes or cosimplicial abelian groups, the epimorphisms are exactly the degreewise epimorphisms.
\end{rem}

We shall now construct a simplicial model structure on the category $\cring$. The cotensoring of an object of $\cring$ by a simplicial set is the same as in $\cat{cAb}$ while the tensor of $A^\bullet$ by $K\in\cat{sSet}$ is given by the following coend
\[( K_{\bullet}\otimes A^{\bullet})^n=(K\times\Delta^n)_\bullet\otimes_{\Delta}A^{\bullet}.\]
This formula can be found in \cite[Section 2.10]{bousfieldcosimplicial}.

\begin{theo}
There is a simplicial model structure on $\cring$ in which 
\begin{enumerate}
\item The weak equivalences are the quasi-isomorphisms.
\item The fibrations are the maps that are degreewise epimorphisms of abelian groups.
\end{enumerate}
\end{theo}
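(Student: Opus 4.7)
The plan is to obtain the model structure by transferring the one on $\cat{cAb}$ from the previous theorem along the degreewise free-forgetful adjunction
\[\sym : \cat{cAb} \leftrightarrows \cring : U,\]
where $\sym$ applies the free commutative ring functor in each cosimplicial degree. By the general definition of a transferred model structure, a map in $\cring$ will then be a weak equivalence (resp.\ fibration) if and only if its underlying map of cosimplicial abelian groups is one, which matches both conditions in the statement.

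I would verify the hypotheses of Kan's transfer theorem. The source $\cat{cAb}$ is cofibrantly generated by the previous theorem, $\cring$ is complete, cocomplete, and locally presentable, and the forgetful functor $U$ preserves filtered colimits (because cosimplicial colimits are formed degreewise and $\ring \to \cat{Ab}$ preserves filtered colimits), so the small object argument applies to both the transferred generating cofibrations and generating trivial cofibrations. The main obstacle is the acyclicity condition: every pushout of a transferred generating trivial cofibration along an arbitrary map in $\cring$ must be a weak equivalence.

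I would establish this via the path object argument. For any $R \in \cring$ and any simplicial set $X$, the cotensoring formula $R^X := [p] \mapsto \prod_{X_p} R^p$ from the previous theorem yields in each cosimplicial degree a finite product of copies of the commutative ring $R^p$, and its cosimplicial structure maps are ring homomorphisms degreewise; hence $R^X$ naturally lies in $\cring$. Taking $X = \Delta^1$, the two vertex inclusions $\Delta^0 \rightrightarrows \Delta^1$ are trivial cofibrations of simplicial sets, and since every object of $\cat{cAb}$ is fibrant, the induced projections $R^{\Delta^1} \to R$ are trivial fibrations in $\cat{cAb}$. By two-out-of-three, the section $R \to R^{\Delta^1}$ induced by the collapse $\Delta^1 \to \Delta^0$ is then a weak equivalence. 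On the other hand, the map $R^{\Delta^1} \to R \times R \cong R^{\partial \Delta^1}$ is in cosimplicial degree $p$ the product projection along the inclusion $(\partial \Delta^1)_p \hookrightarrow (\Delta^1)_p$, hence a degreewise epimorphism, hence a fibration in $\cring$. The resulting factorization $R \to R^{\Delta^1} \to R \times R$ of the diagonal is the required path object, and the standard path object argument then yields the transferred model structure.

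Finally, the simplicial enrichment is inherited from $\cat{cAb}$: the cotensoring $R^X$ already lies in $\cring$, and Quillen's SM7 axiom follows from the corresponding axiom in $\cat{cAb}$ together with the detection of fibrations and weak equivalences in $\cring$ by $U$.
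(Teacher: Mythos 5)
Your proof is correct and follows essentially the same route as the paper: transfer the model structure from $\cat{cAb}$ along the free--forgetful adjunction using the path object argument, with the functorial path object $A\mapsto A^{\Delta^1}$ given by the cotensoring, whose factorization of the diagonal is checked to be a weak equivalence followed by a fibration because the same holds in $\cat{cAb}$ (where all objects are fibrant). Your write-up merely supplies more of the routine verifications (that the cotensoring lands in $\cring$, smallness, SM7) that the paper leaves implicit.
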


\begin{proof}
This is a standard transfer theorem. This can be found in \cite[Théorème 2.1.2]{toenchamps}. The model structure on $\cat{cAb}$ is cofibrantly generated and all objects are fibrant. Moreover, there is a functorial path object in $\cring$ given by $A\mapsto A^{I}$ where we denote by $I$ the simplicial set represented by the object $[1]$ in $\Delta$. Clearly, in the factorization
\[A\to A^I\to A\times A\]
the first map is a weak equivalence and the second map is a fibration because the same is true in $\cat{cAb}$. Thus the model structure on $\cat{cAb}$ can be transferred thanks to the path object argument \cite[Lemma 2.3]{schwedealgebras}.
\end{proof}

\begin{rem}
It is not the case that an epimorphism of commutative rings is necessarily an epimorphism of the underlying abelian groups. A counter-example is given by the unit map $\Z\to\Q$.
\end{rem}

We shall now show that the forgetful functor preserves geometric realization. This will rely on the following proposition.

\begin{prop}\label{prop: forgetful preserves geometric realization}
Let $M$ be a simplicial category, let $T$ be a simplicial monad on $M$ and 
\[F_T:M\leftrightarrows M^T:U_T\]
be the corresponding adjunction. Assume that $T:M\to M$ preserves geometric realization of simplicial diagrams. Then $U_T$ preserves geometric realization of simplicial diagrams.
\end{prop}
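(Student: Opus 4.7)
The plan is to prove the stronger assertion that $U_T$ creates geometric realizations of simplicial diagrams, from which preservation is immediate. This is a simplicial enrichment of the classical Linton-style fact that the forgetful functor from algebras over a monad creates those colimits which the monad preserves.

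Fix a simplicial object $X_\bullet : \Delta\op \to M^T$, and set $Y_\bullet := U_T X_\bullet$ and $Y := |Y_\bullet|$ in $M$. The $T$-algebra structure maps $\alpha_n : T X_n \to X_n$ commute with every face and degeneracy of $X_\bullet$, since those are algebra morphisms, so they assemble into a simplicial map $T Y_\bullet \to Y_\bullet$ in $M$. Taking geometric realizations and using the hypothesis to identify $|T Y_\bullet|$ with $T Y$, I obtain a canonical map $\alpha : T Y \to Y$. The first task is to check that $(Y,\alpha)$ is a $T$-algebra. The unit axiom follows by applying $|-|$ to the pointwise unit diagram on $Y_\bullet$. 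The associativity axiom requires identifying $|T^2 Y_\bullet|$ with $T^2 Y$; this is obtained by applying the hypothesis twice, once to $Y_\bullet$ and once to the simplicial object $T Y_\bullet$ in $M$. Then applying $|-|$ to the pointwise associativity diagram gives $\alpha \circ T\alpha = \alpha \circ \mu_Y$.

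Next I would verify that $(Y,\alpha)$ is the geometric realization of $X_\bullet$ inside $M^T$. For any $T$-algebra $(Z,\zeta)$, a simplicial map $X_\bullet \to \on{const}(Z,\zeta)$ in $M^T$ is the same as a simplicial map $Y_\bullet \to \on{const}(Z)$ in $M$ that intertwines the two $T$-actions, i.e., equalizes the pair $\map_M(Y_\bullet, Z) \rightrightarrows \map_M(T Y_\bullet, Z)$ given respectively by precomposition with $\alpha_\bullet$ and by postcomposition with $\zeta \circ T(-)$. By the universal property of $Y$ as a realization in $M$, together with that of $T Y = |T Y_\bullet|$ (again by the hypothesis on $T$), such data correspond to a single morphism of algebras $(Y,\alpha) \to (Z,\zeta)$. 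Applying the same argument to mapping simplicial sets yields the simplicially enriched universal property. Hence $|X_\bullet|$ exists in $M^T$ and is canonically isomorphic to $(Y,\alpha)$, so $U_T |X_\bullet| = Y = |U_T X_\bullet|$.

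The one place where the hypothesis is genuinely invoked more than once is the associativity check for $\alpha$: one needs $T$ to preserve the realization of the simplicial object $T Y_\bullet$ in addition to that of $Y_\bullet$, but both instances follow from the single assumption applied to two different simplicial diagrams in $M$. The simplicial enrichment itself introduces no real difficulty: since $T$ is a simplicial monad, the unit, multiplication, and the structure maps $\alpha_n$ are simplicial natural, and the coend presentation of geometric realization in a simplicial category is manifestly functorial with respect to simplicial endofunctors.
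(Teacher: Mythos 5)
Your proof is correct and follows essentially the same route as the paper: endow $|U_TX_\bullet|$ with a $T$-algebra structure via the isomorphism $T|U_TX_\bullet|\cong|TU_TX_\bullet|$, then verify the (enriched) universal property in $M^T$ by writing maps out of a $T$-algebra as an equalizer and commuting $\mathrm{Tot}$ past it. You are in fact slightly more careful than the paper, which leaves the unit and associativity checks (and the second application of the hypothesis to $TY_\bullet$ that associativity requires) implicit.
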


\begin{proof}
Let $A_\bullet$ be a simplicial diagram in $M^T$. Then, by hypothesis, we have an isomorphism
\[T|U_T(A_\bullet)|\cong |T(U_T (A_\bullet))|\] 
This makes the object $|U_T(A_\bullet)|$ into a $T$-algebra via the map
\[T|U_T(A_\bullet)|\cong |T(U_T (A_\bullet))|\to |U_T(A_\bullet)|\]
where the second map uses the $T$-structure of each of the objects $A_n$. We shall show that the object $|U_T(A_\bullet)|$ equipped with this $T$-algebra structure satisfies the universal property for being the geometric realization (in $M^T$) of the simplicial diagram $A_\bullet$. This will imply that the canonical map
\[|U_T(A_\bullet)|\to U_T|A_\bullet|\]
is an isomorphism as desired.

Now, we prove the claim. Let $B$ be an object of $M^T$, we have an equalizer diagram
\[\map_{M^T}(|U_T(A_\bullet)|,B)\to \map_{M}(|U_T(A_\bullet)|,U_TB)\rightrightarrows \map_{M}(T|U_T(A_\bullet)|,U_TB)\]
By our hypothesis, and the universal property of the geometric realization (a particular type of weighted colimit), we can rewrite this equalizer diagram as
\[\map_{M^T}(|U_T(A_\bullet)|,B)\to \mathrm{Tot}\map_{M}(U_T(A_\bullet),U_TB)\rightrightarrows \mathrm{Tot}\map_{M}(TU_T(A_\bullet),U_TB)\]
where $\mathrm{Tot}$ denotes the totalization in simplicial sets. Since $\mathrm{Tot}$ commutes with equalizers, we have
\begin{align*}
\map_{M^T}(|U_T(A_\bullet)|,B)&\cong \mathrm{Tot}[\mathrm{eq}(\map_{M}(U_T(A_\bullet),U_TB)\rightrightarrows \map_{M}(TU_T(A_\bullet),U_TB))]\\
&\cong \mathrm{Tot}[\map_{M^T}(A_\bullet,B)]\\
&\cong \map_{M^T}(|A_\bullet|,B)
\end{align*}
\end{proof}

As a corollary, we obtain the following proposition.

\begin{prop}\label{prop : forget preserves geometric realization}
The forgetful functor
\[\cring\to \cat{cAb}\]
preserves geometric realizations of simplicial diagrams.
\end{prop}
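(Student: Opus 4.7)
The plan is to apply Proposition \ref{prop: forgetful preserves geometric realization} with $M = \cat{cAb}$ and $T$ the free commutative ring monad on $\cat{cAb}$. Beck's monadicity theorem applies: the forgetful functor $\cring \to \cat{cAb}$ is conservative, admits the degreewise free commutative ring as a left adjoint, and creates the requisite coequalizers. This identifies $\cring$ with $M^T$ and the forgetful functor with $U_T$, so the problem reduces to showing that $T$ preserves geometric realization of simplicial diagrams in $\cat{cAb}$.

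The monad $T$ acts degreewise, $(TA)^p = \sym(A^p)$. The essential input is a description of the simplicial realization in $\cat{cAb}$ as a ``diagonal''. A simplicial diagram $A_\bullet$ in $\cat{cAb}$ amounts to a functor $\Delta\op \times \Delta \to \cat{Ab}$, $(n,p) \mapsto A_n^p$, simplicial in $n$ and cosimplicial in $p$. Starting from the coend formula $|A_\bullet| = \int^{[n]} A_n \otimes \Delta^n$, unwinding the simplicial tensoring dual to the given cotensoring $(A^K)^p = (A^p)^{K_p}$, and then applying the co-Yoneda lemma in the $n$ variable for each fixed $p$, one obtains
\[|A_\bullet|^p \cong A_p^p.\]

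Given this identification, preservation by $T$ is essentially automatic. Since $T$ acts degreewise on cosimplicial abelian groups, it commutes with passing to the diagonal: $T(|A_\bullet|)^p \cong T(A_p^p) = (TA_\bullet)_p^p \cong |TA_\bullet|^p$, naturally in $p$ and in $A_\bullet$. Hence $T(|A_\bullet|) \cong |TA_\bullet|$ as cosimplicial abelian groups, and Proposition \ref{prop: forgetful preserves geometric realization} yields the claim. The main technical obstacle I anticipate is the explicit verification of $|A_\bullet|^p \cong A_p^p$ together with its compatibility with the cosimplicial structure, which requires a careful unpacking of the simplicial tensoring on $\cat{cAb}$ from its adjoint definition; once that identification is pinned down, the remainder of the argument is formal.
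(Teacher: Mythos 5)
Your first reduction is exactly the paper's: apply Proposition \ref{prop: forgetful preserves geometric realization} to the free commutative ring monad $T$ on $\cat{cAb}$, so that everything comes down to showing $T$ preserves geometric realizations. The gap is in how you establish that. The identification $|A_\bullet|^p\cong A_p^p$ is false, and in fact the right-hand side does not even define a cosimplicial abelian group: $A_n^p$ is contravariant in $n$ and covariant in $p$, so for $\phi:[p]\to[q]$ the available structure maps are $A_p^p\to A_p^q$ and $A_q^q\to A_p^q$, and there is no natural map $A_p^p\to A_q^q$. The analogy with the diagonal of a bisimplicial set breaks precisely because the tensoring on $\cat{cAb}$ is not degreewise: $(K\otimes A)^p=(K\times\Delta^p)\otimes_\Delta A$, which mixes all cosimplicial degrees of $A$ (a degreewise formula $\Z[K_p]\otimes A^p$ would again have the wrong variance in $p$). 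For a concrete counterexample, take $A_\bullet=\Delta^1_\bullet\cdot B$, the copower of a fixed cosimplicial abelian group $B$ by the simplicial set $\Delta^1$. Then $|A_\bullet|\cong\Delta^1\otimes B$, and since the category of simplices of $\Delta^1\times\Delta^0=\Delta^1$ has a terminal object, $|A_\bullet|^0=(\Delta^1\times\Delta^0)\otimes_\Delta B\cong B^1$, whereas $A_0^0=\Z[\Delta^1_0]\otimes B^0\cong B^0\oplus B^0$. Taking $B=\Z[-1]$ gives $B^0=0$ and $B^1=\Z$, so the two sides disagree.

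Since the co-Yoneda computation you planned to rely on does not go through, the preservation of realizations by $T$ has to be argued differently. The paper's route is to decompose $T(A)=\bigoplus_n (A^{\otimes n})_{\Sigma_n}$ into direct sums, $\Sigma_n$-orbits, and the tensor power functors $A\mapsto A^{\otimes n}$, and to check that each of these three operations commutes with geometric realization (the first two because they are colimits, the last by an Eilenberg--Zilber-type compatibility of the tensoring with $\otimes$). Some argument of this kind is genuinely needed; as it stands, your proof does not establish the key hypothesis of Proposition \ref{prop: forgetful preserves geometric realization}.
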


\begin{proof}
Thanks to the previous proposition, it suffices to prove that the free commutative algebra monad preserves geometric realization. This monad is made using direct sums, orbits and the functors $X\mapsto X^{\otimes n}, n\in\mathbb{N}$. These three operations do preserve geometric realization.
\end{proof}

There is a similar model structure on $\cbring$.

\begin{theo}\label{theo : model structure on binomial rings}
There is a model structure on $\cbring$ in which
\begin{enumerate}
\item The weak equivalences are the quasi-isomorphisms.
\item The fibrations are the maps that are degreewise epimorphisms of abelian groups.
\end{enumerate}
Moreover, this model structure is simplicial and the adjunction
\[\bin^U:\cring\leftrightarrows \cbring:U\]
is a Quillen adjunction.
\end{theo}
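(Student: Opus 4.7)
The plan is to transfer the model structure from $\cat{cAb}$ to $\cbring$ via the path object argument \cite[Lemma 2.3]{schwedealgebras}, exactly as in the proof for $\cring$ above. The adjunction we transfer along is the free-forgetful adjunction between $\cat{cAb}$ and $\cbring$, whose left adjoint is $\bin^U$ composed with the degreewise free commutative ring functor. Since $\cat{cAb}$ is cofibrantly generated and all its objects are fibrant, the task reduces to (a) checking that $\cbring$ is cocomplete and that the forgetful functor $\cbring\to\cat{cAb}$ preserves the colimits required to run the small object argument, and (b) exhibiting a functorial path object in $\cbring$ through which the diagonal factors as a weak equivalence followed by a fibration.

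For (a), the essential input is that $U:\bring\to\ring$ admits both a left adjoint $\bin^U$ and a right adjoint $\bin_U$, hence preserves limits and colimits. Applying this degreewise, $\cbring\to\cring$ preserves limits and colimits, and composing with $\cring\to\cat{cAb}$ (which preserves limits and filtered colimits) shows that the forgetful functor $\cbring\to\cat{cAb}$ does too. In particular $\cbring$ is complete, cocomplete, and the small object argument applies. For (b), I take the same path object as in $\cring$: set $I=\Delta[1]$ and define $A^I$ degreewise by $(A^I)^p=(A^p)^{I_p}$. Because $\bring$ is closed under products inside $\ring$ (products being limits, which $U$ preserves), $A^I$ is a cosimplicial binomial ring. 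The factorization $A\to A^I\to A\times A$ is already a weak equivalence followed by a degreewise epimorphism at the level of $\cat{cAb}$, hence also in $\cbring$ by definition of these classes, and the path object argument produces the desired model structure.

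The simplicial enrichment is inherited via the same cotensoring $A^X=[p]\mapsto(A^p)^{X_p}$, which lies in $\cbring$ for the same closure-under-products reason. The Quillen adjunction $\bin^U\dashv U$ is then immediate: the right adjoint $U:\cbring\to\cring$ tautologically preserves fibrations and weak equivalences, since both classes are detected on the underlying cosimplicial abelian group. The only genuinely new point compared to the $\cring$ case is the closure of $\bring$ under the products and filtered colimits that appear in the path object and in the small object argument, and this is the main (mild) obstacle to watch for; fortunately it falls out for free from the existence of the right adjoint $\bin_U$.
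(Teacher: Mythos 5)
Your proposal is correct and follows essentially the same route as the paper, which simply transfers the model structure via the path object argument with $A\mapsto A^I$ serving as the functorial path object in $\cbring$; the extra details you supply (cocompleteness via the right adjoint $\bin_U$, closure under the relevant products, and the tautological verification of the Quillen adjunction) are exactly the points the paper leaves implicit when it says ``the proof is the same.''
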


\begin{proof}
The proof is the same, using the fact that $A\mapsto A^I$ can also serve as a path object in $\cbring$.
\end{proof}

\begin{prop}\label{prop : U preserves tensoring}
The forgetful functor
\[U:\cbring\to\cring\]
preserves the tensoring by simplicial sets.
\end{prop}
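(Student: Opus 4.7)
The plan is to exhibit $U \dashv \bin_U$ as a simplicial adjunction between $\cbring$ and $\cring$; preservation of the tensoring by a simplicial set then follows automatically by a representability argument.

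I would begin by noting that in both $\cring$ and $\cbring$ the cotensoring by a simplicial set $X$ is given by the same explicit formula $A^X = [p] \mapsto (A^p)^{X_p}$, where $(A^p)^{X_p}$ denotes the $X_p$-fold product (in $\ring$ or in $\bring$ respectively) of the ring $A^p$. Now $U : \bring \to \ring$ admits both a left adjoint $\bin^U$ and a right adjoint $\bin_U$, and in particular preserves products; consequently $U(A^X) = U(A)^X$ for any $A \in \cbring$. The same argument applies to $\bin_U$, which is a right adjoint and therefore preserves products, giving $\bin_U(B^X) = \bin_U(B)^X$ for any $B \in \cring$.

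Next I would upgrade the ordinary adjunction $U \dashv \bin_U$ to a simplicial adjunction. For $A \in \cbring$, $B \in \cring$, and $p \geq 0$, the $p$-simplices of $\map_{\cring}(UA, B)$ are $\Hom_{\cring}(UA, B^{\Delta^p})$, which equal $\Hom_{\cbring}(A, \bin_U(B^{\Delta^p}))$ by the ordinary adjunction and then equal $\Hom_{\cbring}(A, (\bin_U B)^{\Delta^p})$ by the preservation of cotensoring observed above; these are precisely the $p$-simplices of $\map_{\cbring}(A, \bin_U B)$. The identifications are natural in $[p]$, producing an isomorphism $\map_{\cring}(UA, B) \cong \map_{\cbring}(A, \bin_U B)$ of simplicial sets.

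Preservation of the tensoring would then be immediate from the Yoneda lemma: for $X$ a simplicial set, $A \in \cbring$, and $B \in \cring$,
\[\Hom_{\cring}(U(X \otimes A), B) \cong \Hom_{\mathrm{sSet}}(X, \map_{\cbring}(A, \bin_U B)) \cong \Hom_{\mathrm{sSet}}(X, \map_{\cring}(UA, B)) \cong \Hom_{\cring}(X \otimes UA, B),\]
naturally in $B$, so $U(X \otimes A) \cong X \otimes UA$. I do not foresee any serious obstacle: the whole argument is formal and ultimately rests on the fact, recalled at the start of the paper, that $U : \bring \to \ring$ admits adjoints on both sides, which forces both $U$ and $\bin_U$ to preserve the products out of which the cotensoring is built.
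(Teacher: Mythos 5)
Your argument is correct, but it takes a genuinely different route from the paper's. The paper writes the tensoring explicitly as a coend, $(K\otimes A)^n=(K\times\Delta^n)_\bullet\otimes_\Delta A^\bullet$, i.e.\ as a colimit built out of coproducts, and then simply observes that $U$ preserves all colimits because it admits the right adjoint $\bin_U$; this settles the claim in two lines. You instead work on the cotensoring side: since products in $\cbring$ are computed in $\cring$ and $\bin_U$ preserves products, both $U$ and $\bin_U$ commute with the degreewise-product cotensoring, which upgrades $U\dashv\bin_U$ to a simplicial adjunction, and then a Yoneda/representability argument transfers this to preservation of tensors by the left adjoint $U$. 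Both proofs ultimately rest on the same input --- the existence of $\bin_U$ --- but yours buys slightly more (the simplicial enrichment of the adjunction $U\dashv\bin_U$ and the identification $\map_{\cring}(UA,B)\cong\map_{\cbring}(A,\bin_U B)$, which is of independent use), at the cost of being longer and of requiring the small additional check that the isomorphism produced by Yoneda is the canonical comparison map $X\otimes UA\to U(X\otimes A)$; this is standard for simplicial adjunctions, so there is no gap. The paper's proof is the more economical one and avoids discussing the enrichment of the adjunction altogether.
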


\begin{proof}
In both categories, the tensor of $A^\bullet$ by $K\in\cat{sSet}$ is given by the following coend
\[( K_{\bullet}\otimes A^{\bullet})^n=(K\times\Delta^n)_\bullet\otimes_{\Delta}A^{\bullet}.\]
The forgetful functor has a right adjoint so it preserves all colimits and tensoring by sets (as those are simply given by coproducts) so it will preserve the tensoring by simplicial sets.
\end{proof}

\begin{prop}\label{prop: W preserves hocolim}
The forgetful functor
\[W:\cring \to\cat{cAb}\]
preserves and reflects filtered homotopy colimits and homotopy colimits indexed by $\Delta\op$.
\end{prop}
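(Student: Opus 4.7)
The plan is to prove preservation first and then derive reflection from the fact that $W$ detects weak equivalences. Preservation of homotopy colimits indexed by $\Delta\op$ is exactly the content of Proposition \ref{prop : forget preserves geometric realization}, so the new substance lies in the filtered case. For this I would replay the argument of Proposition \ref{prop: forgetful preserves geometric realization} with the indexing category $\Delta$ replaced by (the opposite of) a filtered category $I$. The single nontrivial input to verify is that the free commutative ring monad $T$ on $\cat{cAb}$ preserves filtered homotopy colimits. This monad has the form $T(A) = \bigoplus_{n\geq 0}(A^{\otimes n})_{\Sigma_n}$, built from direct sums, levelwise tensor powers, and $\Sigma_n$-orbits. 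Each of these operations commutes with filtered colimits of cosimplicial abelian groups (direct sums and orbits are themselves colimits, and tensor product of abelian groups commutes with filtered colimits in each variable), and filtered colimits in $\cat{cAb}$ are exact so they compute filtered homotopy colimits on the nose. Hence $T$ preserves filtered homotopy colimits.

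With this in hand the proof of Proposition \ref{prop: forgetful preserves geometric realization} carries over verbatim: the isomorphism $T\,\mathrm{hocolim}_I\,W(A_i)\cong \mathrm{hocolim}_I\,T\,W(A_i)$ endows the underlying hocolim with a canonical $T$-algebra structure, and the universal property is checked by the same equalizer-of-mapping-spaces computation, with $\mathrm{Tot}$ replaced by $\mathrm{holim}_{I\op}$. The latter still commutes with the equalizer because limits commute with limits, so the adapted argument goes through.

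Reflection then follows from a general principle. The functor $W$ is conservative because weak equivalences in $\cring$ are, by definition, quasi-isomorphisms of underlying cosimplicial abelian groups. Given a cone $(A_i \to B)$ in $\cring$ of the relevant shape whose image under $W$ is a homotopy colimit cone, form the homotopy colimit $C$ of $(A_i)$ in $\cring$; the canonical map $C\to B$ becomes an equivalence after applying $W$ by preservation (both $WC$ and $WB$ are now homotopy colimits of the same diagram in $\cat{cAb}$), so conservativity upgrades it to a weak equivalence $C \to B$ in $\cring$.

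The main obstacle I anticipate is the careful adaptation of Proposition \ref{prop: forgetful preserves geometric realization} to the filtered setting, particularly in packaging the ``$T|-|\cong |T(-)|$'' step compatibly with the simplicial enrichment of $\cring$ and in confirming that the levelwise tensor powers $A\mapsto A^{\otimes n}$ on $\cat{cAb}$ are genuinely homotopy-invariant on filtered diagrams. Both issues should resolve because filtered colimits of abelian groups are exact and everything is computed levelwise, but stating this cleanly at the model-categorical level is the only real bookkeeping required.
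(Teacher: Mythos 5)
Your proof is correct and follows essentially the same route as the paper: preservation plus conservativity gives reflection, the $\Delta\op$ case is delegated to Proposition \ref{prop : forget preserves geometric realization}, and the filtered case reduces to the free commutative ring monad being finitary (the paper phrases this more briefly as the forgetful functor preserving ordinary filtered colimits, which are automatically derived in $\cat{cAb}$). The only sentence you still owe in the $\Delta\op$ case is the identification of strict geometric realizations with $\Delta\op$-indexed homotopy colimits, which the paper supplies by observing that every simplicial object of $\cat{cAb}$ is Reedy cofibrant, so geometric realizations there are automatically derived.
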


\begin{proof}
Since this functor is homotopy conservative, it is enough to show that it preserves those homotopy colimits. Filtered colimits in cosimplicial abelian groups are automatically derived and the forgetful functor does preserve ordinary filtered colimits so the first claim is obvious. For homotopy colimits indexed by $\Delta\op$, we use Proposition \ref{prop: forgetful preserves geometric realization} which proves that the forgetful functor preserves geometric realizations. In a model category, geometric realization coincides with homotopy colimits for Reedy cofibrant objects. It turns out that, in cosimplicial abelian groups, geometric realizations are automatically derived since every simplicial diagram of cosimplicial abelian groups is Reedy cofibrant.
\end{proof}

\begin{prop}\label{prop : U preserves hocolim}
The forgetful functor
\[U:\cbring\to\cring\]
preserves and reflects homotopy colimits.
\end{prop}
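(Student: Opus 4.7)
The strategy exploits that $U$ has both a left adjoint $\bin^U$ and a right adjoint $\bin_U$, so at the level of ordinary categories $U$ preserves all colimits. The plan is to upgrade this to a homotopical statement via the Bousfield--Kan formula, and then to deduce reflection from preservation using the fact that $U$ is homotopy conservative.

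For preservation, I would invoke the Bousfield--Kan presentation of a homotopy colimit in the simplicial model category $\cbring$: for a diagram $F : I \to \cbring$, the object $\on{hocolim}_I F$ is the geometric realization of the simplicial bar construction $B(\ast, I, F)_n = \coprod_{i_0 \to \cdots \to i_n} F(i_0)$. It therefore suffices to check that $U$ preserves strict coproducts and strict geometric realizations of simplicial diagrams. Coproducts are preserved since $U$ is a left adjoint to $\bin_U$. Geometric realizations of simplicial diagrams are preserved by applying Proposition~\ref{prop: forgetful preserves geometric realization} to the monad $T = U \circ \bin^U$ on $\cring$, whose category of algebras is $\cbring$: the hypothesis that $T$ preserve geometric realizations holds because both $\bin^U$ (left adjoint to $U$) and $U$ (left adjoint to $\bin_U$) preserve arbitrary colimits. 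To ensure these strict constructions model the derived operations, I would appeal to the observation made in the proof of Proposition~\ref{prop: W preserves hocolim} that every simplicial diagram of cosimplicial abelian groups is Reedy cofibrant; since weak equivalences in $\cring$ and $\cbring$ are defined at the underlying $\cat{cAb}$ level, strict and derived realizations coincide in both categories.

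For reflection, I would use that $U$ reflects weak equivalences, since by definition weak equivalences in $\cbring$ are the quasi-isomorphisms on underlying cosimplicial abelian groups. Given a cocone $F \to X$ in $\cbring$ whose image under $U$ is a homotopy colimit cocone in $\cring$, the canonical comparison map $\on{hocolim}_I F \to X$ becomes, after applying $U$ and using the preservation just established, the corresponding comparison in $\cring$, which is a weak equivalence by hypothesis. Conservativity of $U$ then forces the original comparison in $\cbring$ to be a weak equivalence.

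The main subtlety I expect is the verification that the strict Bousfield--Kan construction genuinely computes the homotopy colimit in $\cbring$ and agrees with its counterpart in $\cring$ under $U$. This is ultimately a Reedy cofibrancy question, which I would reduce to the corresponding automatic statement in $\cat{cAb}$ by composing with the homotopy conservative forgetful functor to cosimplicial abelian groups.
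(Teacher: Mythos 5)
Your overall architecture is sound, and your reduction of reflection to preservation via conservativity of $U$ is exactly the paper's first step. For preservation you take a genuinely different route: you compute an arbitrary homotopy colimit in one stroke via the Bousfield--Kan bar construction, whereas the paper reduces to the generating classes of homotopy colimits (finite coproducts, filtered colimits, and $\Delta\op$-indexed colimits) and treats each separately. Both routes ultimately rest on the same two ingredients: preservation of geometric realizations of simplicial objects, and flatness of binomial rings. On the first, your detour through the monad $T=U\circ\bin^U$ and Proposition \ref{prop: forgetful preserves geometric realization} works but is redundant: since $U$ preserves all colimits and the simplicial tensoring (Proposition \ref{prop : U preserves tensoring}), it preserves geometric realizations directly, which is how the paper argues.

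On the second ingredient your write-up has a gap. For $|B_\bullet(\ast,I,UF)|$ to compute $\on{hocolim}_I UF$ in $\cring$, the levelwise coproducts $\coprod_{i_0\to\cdots\to i_n}UF(i_0)$, which in $\cring$ are (possibly infinite) tensor products over $\Z$, must be \emph{homotopy} coproducts. Your Reedy-cofibrancy argument in $\cat{cAb}$ guarantees only that the realization step of the bar construction is automatically derived; it says nothing about the tensor products, which are not quasi-isomorphism invariant in $\cring$ in general (compare $\F_p\otimes_{\Z}\F_p$ with $\F_p\otimes^{\L}_{\Z}\F_p$), and $U$ is a right Quillen functor, so it need not carry your objectwise cofibrant replacement of $F$ to cofibrant objects of $\cring$. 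The missing sentence is the one the paper supplies in its finite-coproduct step: binomial rings are torsion-free, hence degreewise flat over $\Z$, so their tensor products (and, passing to filtered colimits, arbitrary coproducts of them) are automatically derived. With that added, your argument goes through.
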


\begin{proof}
Since this functor is homotopy conservative, it suffices to prove that it preserves homotopy colimits. For this, it is enough to prove that it preserves finite homotopy coproducts, filtered homotopy colimits and homotopy colimits indexed by $\Delta\op$. The case of filtered colimits is straightforward since these are computed in $\cat{cAb}$. The case of finite coproducts is also easy as these are simply given by the derived tensor product in both categories and in $\cbring$, tensor products are automatically derived since a binomial ring is flat.

For homotopy colimits indexed by $\Delta\op$, using the previous lemma, it is enough to  prove that the forgetful functor
\[V:\cbring\to \cat{cAb}\]
preserves them. Moreover, arguing as in the proof of the previous lemma, it is enough to show that $V$ preserves geometric realizations of simplicial diagrams. We already know that the forgetful functor $\cring\to\cat{cAb}$ preserves geometric realizations. The forgetful functor $U:\cbring\to\cring$ also preserves geometric realizations. Indeed, this functor preserves colimits and the tensoring by simplicial sets thanks to Proposition \ref{prop : U preserves tensoring}. So $V$ preserves geometric realizations as it is the composite of two functors that do so.
\end{proof}

\begin{rem}
Even though the functor $U:\cbring\to\cring$ is fully faithful, the induced functor between the homotopy categories is not. Indeed, it follows from our main theorem (Theorem \ref{theo : main}) that the mapping spaces between cosimplicial binomial rings of the form $\Z^X$ for a reasonable space are weakly equivalent to the mapping spaces between the corresponding spaces. However, this is not the case in $\cring$. This kind of situation happens quite often in homotopy theory (for example, the $1$-category of chain complexes of $\F_p$-vector spaces is a full subcategory of the category of chain complexes of abelian groups but this is no longer true after inverting quasi-isomorphisms). Nevertheless, this forgetful functor is monadic as shown by the following theorem.
\end{rem}

\begin{theo}\label{theo : monadic}
Let us denote by $c\mathcal{BR}ing$ and $c\mathcal{R}ing$ the $\infty$-categories underlying the model categories $\cbring$ and $\cring$. The right adjoint functor 
\[U:c\mathcal{BR}ing\to c\mathcal{R}ing\]
exhibits the $\infty$-category $c\mathcal{BR}ring$ as the $\infty$-category of algebras over the corresponding monad. The same can be said for the right adjoint functor
\[V:c\mathcal{BR}ing\to \mathcal{D}(\Z)_{\leq 0}\]
\end{theo}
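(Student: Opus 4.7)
The plan is to invoke the Barr--Beck--Lurie monadicity theorem. For a right adjoint $G : \mathcal{D} \to \mathcal{C}$ between presentable $\infty$-categories with left adjoint $F$, $G$ is monadic if and only if $G$ is conservative and preserves geometric realizations of $G$-split simplicial objects. Essentially all of the work for both $U$ and $V$ is already contained in the preceding propositions; what remains is to verify these two hypotheses, together with the existence of the left adjoint.

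For the existence of left adjoints, $U$ has left adjoint $\bin^U$ by Theorem \ref{theo : model structure on binomial rings}, and this is a left Quillen functor so it descends to a left adjoint on underlying $\infty$-categories. For $V$, the left adjoint is the composite of $\bin^U$ with the left Quillen functor $\sym : \cat{cAb} \to \cring$ (free cosimplicial commutative ring), after identifying $\cat{cAb}$ with $\mathcal{D}(\Z)_{\leq 0}$ via Dold--Kan. Conservativity follows immediately from the definition of the model structures: weak equivalences on each side are by definition quasi-isomorphisms, and a map of cosimplicial binomial rings is a quasi-isomorphism iff its image in $c\mathcal{R}ing$ (or in $\mathcal{D}(\Z)_{\leq 0}$) is a quasi-isomorphism, so $U$ and $V$ both reflect equivalences.

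The geometric realization hypothesis is exactly the content of Proposition \ref{prop : U preserves hocolim}, which shows that $U$ preserves all homotopy colimits, and in particular geometric realizations of arbitrary (not merely $U$-split) simplicial objects. For $V$, write $V = W \circ U$ where $W : c\mathcal{R}ing \to \mathcal{D}(\Z)_{\leq 0}$ is the further forgetful functor; Proposition \ref{prop: W preserves hocolim} shows $W$ preserves geometric realizations, and composing gives the same property for $V$. Thus both hypotheses of Barr--Beck--Lurie are satisfied and the two stated monadic comparison functors are equivalences. I expect no substantive obstacle here beyond the care required to cite the $\infty$-categorical form of monadicity, since the preservation results doing the real work were established in Propositions \ref{prop: forgetful preserves geometric realization}, \ref{prop : U preserves tensoring}, \ref{prop: W preserves hocolim}, and \ref{prop : U preserves hocolim}.
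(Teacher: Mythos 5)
Your proposal is correct and follows essentially the same route as the paper: both invoke the Barr--Beck--Lurie monadicity theorem and verify conservativity together with preservation of $\Delta^{\mathrm{op}}$-indexed colimits via Propositions \ref{prop: W preserves hocolim} and \ref{prop : U preserves hocolim}. The paper's proof is just a terser version of yours; your added remarks on the left adjoints and the factorization $V = W \circ U$ are consistent with what the paper leaves implicit.
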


\begin{proof}
This is an application of the Barr-Beck-Lurie monadicity theorem. In both cases, the functor is conservative and preseves colimits indexed by $\Delta\op$ thanks to Proposition \ref{prop: W preserves hocolim} and Proposition \ref{prop : U preserves hocolim}.
\end{proof}

\section{Main theorem for Eilenberg-MacLane spaces}

We define a functor $\cat{sSet}\to \cbring\op$ by the formula $X\mapsto \Z^X$. Since $\Z^{X}$ is degreewise a product of copies of $\Z$, it is indeed a cosimplicial binomial ring. This functor is a left adjoint. Its derived right adjoint is denoted
\[A\mapsto \langle A\rangle.\] 
Explicitly, this derived right adjoint is given by
\[A\mapsto \map_{\cbring}(A^c,\Z)\]
where $A^c\xrightarrow{\simeq}A$ is a cofibrant replacement.
Our main theorem, Theorem \ref{theo : main} below, is that the derived unit map 
\[X\to\langle \Z^X\rangle\]
is a weak equivalence for reasonable spaces. In this section we shall focus on the case of Eilenberg-MacLane spaces.

We denote by $K_n$ an Eilenberg-MacLane space of type $(\Z,n)$. An explicit model is given by the inverse simplicial Dold-Kan functor applied to the chain complex $\Z[n]$. The counit of the free-forgetful adjunction between simplicial abelian groups and simplicial sets gives us a map of simplicial abelian groups
\[\Z\langle K_n\rangle \to \Z[n]\]
(where $\Z\langle-\rangle$ is our notation for the free abelian group functor). If we apply the duality functor $M\mapsto \Hom(M,\Z)$ to this map, we obtain a map  of cosimplicial abelian groups
\[\Z[-n]\to \Z^{K_n}.\]
Finally, we can use the free-forgetful adjunction between cosimplicial abelian groups and cosimplicial binomial rings to produce a map of cosimplicial binomial rings
\[\alpha_n:\sym^{bin}(\Z[-n])\to \Z^{K_n},\]
where $\sym^{bin}(A):=\bin^U(\sym(A))$ is our notation for the free binomial ring on an abelian group $A$.

\begin{prop}\label{prop : case n=1}
The map $\alpha_1$ is a weak equivalence.
\end{prop}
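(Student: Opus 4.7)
The strategy is to compute the cohomology of both sides of $\alpha_1$ and compare generators. On the target, the cohomology of the cosimplicial abelian group $\Z^{K_1}$ is by construction the singular cohomology of $K_1 = K(\Z,1)$, hence $\Z$ in degrees $0$ and $1$ and zero otherwise. On the source, I would reduce everything to Proposition \ref{prop : cobar of Num}.

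For this reduction, I would identify $\sym^{bin}(\Z[-1])$ with the cobar construction $\Omega(\on{Num}[x])$ as a cosimplicial abelian group. Unpacking Dold-Kan one finds $\Gamma(\Z[-1])^n \cong \Z^n$ (there are exactly $n$ order-preserving surjections $[n]\twoheadrightarrow[1]$), so applying $\sym^{bin}$ termwise yields
\[\sym^{bin}(\Z[-1])^n \cong \on{Num}[x_1, \ldots, x_n] = \on{Num}[x]^{\otimes n}.\]
A direct bookkeeping check, using that the standard model $B\Z$ of $K(\Z,1)$ has simplicial face maps that sum consecutive entries $(a_1,\ldots,a_{n+1})\mapsto(a_1,\ldots,a_i+a_{i+1},\ldots,a_{n+1})$, shows that the induced cofaces on $\sym^{bin}(\Z[-1])$ match the cobar cofaces built from the Vandermonde coproduct $f(x)\mapsto f(x+y)$ and the coaugmentation $\Z\to\on{Num}[x]$, and similarly the codegeneracies match via the counit $f(x)\mapsto f(0)$. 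Proposition \ref{prop : cobar of Num} then gives $H^*(\sym^{bin}(\Z[-1]))$ equal to $\Z$ in degrees $0$ and $1$ and zero otherwise.

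It then remains to verify that $\alpha_1$ induces an isomorphism on $H^0$ and $H^1$. The $H^0$ claim is immediate because both groups identify with the ring of constants $\Z$. For $H^1$, the variable $x\in\on{Num}[x] = \sym^{bin}(\Z[-1])^1$ is a cocycle (the differential evaluates to $x_2-(x_1+x_2)+x_1=0$) and generates $H^1$ of the source by the cobar computation. Tracing through the construction of $\alpha_1$, the image of $x$ in $(\Z^{K_1})^1 = \Z^{\Z}$ is the identity function $\Z\to\Z$, which classically represents the fundamental cohomology class of $K(\Z,1)$. Hence $\alpha_1$ sends a generator of $H^1$ to a generator of $H^1$, and is thus a quasi-isomorphism.

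The main obstacle is the careful bookkeeping for the identification of $\sym^{bin}(\Z[-1])$ with the cobar construction: Dold-Kan variances, the direction of structure maps, and the compatibility of the group law on $B\Z$ with the coproduct on $\on{Num}[x]$ must all be tracked precisely. Once that identification is clean, Proposition \ref{prop : cobar of Num} does the real work.
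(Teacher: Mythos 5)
Your proposal is correct and follows essentially the same route as the paper: identify $\sym^{bin}(\Z[-1])$ with the cobar construction of $\on{Num}[x]$, invoke Proposition \ref{prop : cobar of Num}, and then check degrees $0$ and $1$ by tracing the class coming from $\Z[-1]\subset\sym^{bin}(\Z[-1])$ to the fundamental class of $K(\Z,1)$. The only cosmetic difference is at the last step: you argue ``generator maps to generator'' (which needs the bigraded refinement from the proof of Proposition \ref{prop : cobar of Num} to see that $x$ generates $H^1$ of the source), whereas the paper argues that $\alpha_1$ is surjective on the rank-one group $H^1$ and hence an isomorphism; both are fine.
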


\begin{proof}
This fact and the next Theorem are equivalent to \cite[Corollaire 3.4]{toenprobleme}. We shall give an independant proof. We first compute $\sym^{bin}(\Z[-1])$. The simplicial abelian group $\Z[1]$ is the usual simplicial bar construction for $\Z$ given by
\[[n]\mapsto \Z^{n}\]
with face maps given by addition of two consecutive factors or the zero map $\Z\to 0$ for the two extreme face maps. The cosimplicial abelian group $\Z[-1]$ is its dual. It is given by
\[[n]\mapsto \Z^{n}\]
with coface maps induced by the diagonal map $\Z\to\Z\times \Z$ or the zero map $0\to\Z$ for the two extreme coface maps. Therefore the cosimplicial binomial ring $\sym^{bin}(\Z[-1])$ is given by
\[[n]\mapsto \on{Num}[x]^{\otimes n}\]
with cofaces induced by the comultiplication map
\[\on{Num}[x]\to\on{Num}[x]\otimes\on{Num}[x]\]
or the unit $\Z\to\on{Num}[x]$ for the two extreme coface maps. This cosimplicial object is simply the cobar construction of the coalgebra $\on{Num}[x]$. Thanks to Proposition \ref{prop : cobar of Num}, we know that the cohomology of this object is abstractly isomorphic to the cohomology of $\Z^{K_1}\simeq\Z^{S^1}$. 

It remains to show that the map $\alpha_1$ induces this isomorphism. For this we have to analyze the map of cosimplicial commutative rings
\[\alpha_1:\sym^{bin}(\Z[-1])\to\Z^{K_1}\]
The cosimplicial ring $\Z^{K_1}$ is given by
\[[n]\mapsto \Hom_{\cat{Set}}(\Z^n,\Z)\]
and the cosimplicial ring $\sym^{bin}(\Z[-1])$ is the cosimplicial subring obtained by restricting to numerical maps in each degree. There is an even smaller cosimplicial abelian group given by taking abelian group homomorphisms in each cosimplicial degree. The latter cosimplicial abelian group is simply $\Z[-1]$. The map from this cosimplicial abelian group to $\Z^{K_1}$ induces an isomorphism in cohomology in degree $1$. It follows that the map
\[\alpha_1:\sym^{bin}(\Z[-1])\to\Z^{K_1}\]
must be surjective on cohomology in degree $1$. Since both the source and target cohomology group are free of rank $1$, the map $\alpha_1$ is an isomorphism in this degree.

The fact that $\alpha_1$ is an isomorphism in degree $0$ is completely straightforward.
\end{proof}

\begin{prop}\label{prop : main for EM}
The map $\alpha_n$ is a weak equivalence of cosimplicial binomial rings.
\end{prop}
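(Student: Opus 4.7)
The plan is to induct on $n$, with the base case $n=1$ being Proposition~\ref{prop : case n=1}. For the inductive step, assume $\alpha_n$ is a weak equivalence. The first move is to use that $\Z[-(n+1)]\simeq B\Z[-n]$ is the suspension of $\Z[-n]$ in $\cat{cAb}$, realized as the geometric realization of the bar simplicial object $[k]\mapsto (\Z[-n])^{\oplus k}$. Applying the left adjoint $\sym^{bin}$, which converts direct sums to tensor products and preserves colimits, gives
\[\sym^{bin}(\Z[-(n+1)])\simeq\bigl|[k]\mapsto\sym^{bin}(\Z[-n])^{\otimes k}\bigr|_{\Delta\op}\]
as a homotopy colimit in $c\mathcal{BR}ing$.

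Analogously, $K_{n+1}\simeq BK_n$ is the realization of the simplicial space $[k]\mapsto K_n^k$, and since $\Z^{(-)}\colon\mathcal{S}\to c\mathcal{BR}ing\op$ is a left adjoint, this yields
\[\Z^{K_{n+1}}\simeq\mathrm{Tot}_\Delta\bigl([k]\mapsto\Z^{K_n^k}\bigr)\simeq\mathrm{Tot}_\Delta\bigl([k]\mapsto(\Z^{K_n})^{\otimes k}\bigr),\]
the last equivalence using the Eilenberg-Zilber comparison, valid at the derived level in $c\mathcal{BR}ing$ because binomial rings are torsion-free. By the inductive hypothesis and flatness, the map $\alpha_n^{\otimes k}$ is a weak equivalence for every $k$, so the two diagrams are termwise equivalent, and $\alpha_{n+1}$ is the induced comparison map from a $\Delta\op$-realization to a $\Delta$-totalization.

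To upgrade this levelwise statement to a genuine weak equivalence, I compare cohomologies via spectral sequences. The hocolim spectral sequence for $\sym^{bin}(\Z[-(n+1)])$ and the classical Eilenberg-Moore spectral sequence associated to the path-loop fibration $K_n\to PK_{n+1}\to K_{n+1}$ both have $E_2$-pages expressible via (co)bar constructions on the Hopf algebra $H^*(K_n;\Z)$, which coincide under the inductive identification of the two sides. Both spectral sequences converge strongly due to the $(n-1)$-connectivity of $K_n$, which controls the connectivity of $\sym^{bin}(\Z[-n])^{\otimes k}$ and $(\Z^{K_n})^{\otimes k}$ uniformly in $k$. By naturality, $\alpha_{n+1}$ induces the identity at $E_2$ and hence an isomorphism on the abutment.

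The main difficulty I expect is justifying the strong convergence of both spectral sequences and carefully matching their $E_2$-pages, which requires reconciling the $\Delta\op$-hocolim presentation on the source with the $\Delta$-totalization on the target. This is essentially the bar-cobar convergence argument underlying Toën's proof of \cite[Corollaire 3.4]{toenprobleme}.
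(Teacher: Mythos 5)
Your induction and base case are right, and the space-side identification $K_{n+1}\simeq BK_n$, hence $\Z^{K_{n+1}}\simeq\mathrm{Tot}_\Delta\bigl([k]\mapsto(\Z^{K_n})^{\otimes k}\bigr)$, is correct (this is the homotopy pushout square $\Z^{K_{n+1}}\to\Z\rightrightarrows\Z\to\Z^{K_n}$ coming from \cite[Theorem A.1]{toenprobleme}). But the algebraic half of your decomposition runs in the wrong direction. In $\cat{cAb}$ the geometric realization of the bar object $[k]\mapsto(\Z[-n])^{\oplus k}$ is the \emph{suspension} of $\Z[-n]$, which shifts homological degree up, i.e.\ cohomological degree \emph{down}: it is $\Z[-(n-1)]$, not $\Z[-(n+1)]$. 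The correct statement is $\Z[-n]\simeq B\Z[-(n+1)]$, so applying $\sym^{bin}$ gives $F_n\simeq B(F_{n+1})=\Z\otimes^{\L}_{F_{n+1}}\Z$, and likewise Eilenberg--Moore gives $\Z^{K_n}\simeq\Z\otimes^{\L}_{\Z^{K_{n+1}}}\Z$. Both natural moves compute the level-$n$ objects \emph{from} the level-$(n+1)$ objects; you cannot express $F_{n+1}$ as a realization of tensor powers of $F_n$, because that would require $\sym^{bin}$ (a left adjoint) to commute with the totalization computing $\Z[-(n+1)]=\Omega\Z[-n]$ --- and that commutation is essentially equivalent to the statement being proved.

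Consequently the inductive step is forced into the opposite shape: knowing that $\alpha_n\simeq B(\alpha_{n+1})$ is a weak equivalence, one must \emph{descend} to conclude that $\alpha_{n+1}$ is one. This requires the conservativity of the bar construction on coconnected augmented dg-algebras over $\Z$, which is the real content of the inductive step (Theorem \ref{conservativity of Bar}); it is not formal, and is proved via the cobar construction, a completeness argument for the length filtration, and an arithmetic-fracture reduction from $\Z$ to fields. Your proposal contains no substitute for this descent, and the difficulty you flag (strong convergence of the two spectral sequences and matching their $E_2$-pages) is not where the problem lies: even granting convergence, a termwise equivalence of a $\Delta\op$-realization against a $\Delta$-totalization of diagrams of opposite variance does not produce or control the comparison map $\alpha_{n+1}$. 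The degree bookkeeping is worth rechecking against Proposition \ref{prop : case n=1}, where $\sym^{bin}(\Z[-1])$ is the \emph{cobar} (cosimplicial) object on $\on{Num}[x]$, not a bar (simplicial) one.
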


\begin{proof}
In order to simplify notations, we denote by $F_n$ the cosimplicial binomial ring $\sym^{bin}(\Z[-n])$.The case $n=1$ is treated in the previous proposition. Assume that the result is known for $n$. We have a homotopy pushout square that we call $S_1$
\[
\xymatrix{
F_{n+1}\ar[r]\ar[d]&\Z\ar[d]\\
\Z\ar[r]&F_{n}
}
\]
obtained by applying the left Quillen functor $\sym^{bin}$ to the following homotopy pushout square in $\cat{cAb}$~:
\[
\xymatrix{
\Z[-(n+1)]\ar[r]\ar[d]&0\ar[d]\\
0\ar[r]&\Z[-n]
}
\]
Similarly, we have a homotopy pullback square
\[\xymatrix{
K_n\ar[r]\ar[d]&\ast\ar[d]\\
\ast\ar[r]&K_{n+1}
}
\]
which induces another homotopy pushout square in $\cbring$ that we call $S_2$. 
\[\xymatrix{
\Z^{K_{n+1}}\ar[r]\ar[d]&\Z\ar[d]\\
\Z\ar[r]&\Z^{K_n}
}
\]
The fact that $S_2$ is a homotopy pushout square is a consequence of the analogous fact in the model category $\cring$ (see \cite[Theorem A.1]{toenprobleme}) and Proposition \ref{prop : U preserves hocolim}.

The maps $\alpha_n$ and $\alpha_{n+1}$ induce a map from $S_1$ to $S_2$. Thanks to Proposition \ref{prop : U preserves hocolim}, homotopy pushouts in $\cbring$ coincide with homotopy pushouts in $\cring$ and are simply given by the relative derived tensor product. In this particular situation, the relative derived tensor product $\Z\otimes^{L}_A\Z$ is the Bar construction of $A$ usually denoted $B(A)$. 

In other words, we would like to prove that the map $\alpha_{n+1}$ is a weak equivalence knowing that $\alpha_n=B(\alpha_{n+1})$ is a weak equivalence. This follows from Theorem \ref{conservativity of Bar}.
\end{proof}

\begin{prop}\label{prop : main theorem for EM spaces} 
The unit map $\eta_n:K_n\to\langle \Z^{K_n}\rangle$ is a weak equivalence.
\end{prop}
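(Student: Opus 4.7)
The strategy is to use the weak equivalence $\alpha_n$ from Proposition \ref{prop : main for EM} as an explicit cofibrant replacement of $\Z^{K_n}$, then compute $\langle \Z^{K_n}\rangle$ via the $(\sym^{bin},U)$-adjunction. Since $\Z[-n]$ is cofibrant in $\cat{cAb}$ (it is a shifted $\Z$ in the projective model structure) and $\sym^{bin}$ is left Quillen by Theorem \ref{theo : model structure on binomial rings}, the object $\sym^{bin}(\Z[-n])$ is cofibrant in $\cbring$. Combined with Proposition \ref{prop : main for EM} this yields
\[\langle \Z^{K_n}\rangle \simeq \map_{\cbring}(\sym^{bin}(\Z[-n]),\Z) \cong \map_{\cat{cAb}}(\Z[-n],\Z),\]
where the second isomorphism is the $(\sym^{bin},U)$-adjunction at the level of simplicial mapping spaces (which is valid since $U$ preserves the cotensor by Proposition \ref{prop : U preserves tensoring}).

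The next step is to identify $\map_{\cat{cAb}}(\Z[-n],\Z)$ with $K_n$. For a simplicial set $L$, the cotensor satisfies $(\Z^L)^p=\Z^{L_p}$ and hence $N\Z^L=C^*(L;\Z)$ under Dold--Kan; moreover, a map of cosimplicial abelian groups out of $\Z[-n]$ is determined by a single element in cohomological degree $n$ of the target, subject to the cocycle condition. Therefore, naturally in $k$,
\[\Hom_{\cat{cAb}}(\Z[-n],\Z^{\Delta^k}) = Z^n(\Delta^k;\Z) = (K_n)_k,\]
where the last equality uses the Kan-complex model $K_n=\Gamma(\Z[n])$ (or equivalently the universal property that $n$-cocycles on a simplicial set classify maps to $K_n$). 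This gives an isomorphism of simplicial sets $\map_{\cat{cAb}}(\Z[-n],\Z)\cong K_n$.

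The final and most delicate step, which I expect to be the main obstacle, is to verify that $\eta_n$ is carried to the identity of $K_n$ by the chain of equivalences just constructed. By the universal property of the adjunction, the derived unit $\eta_n$ is the unique map whose transpose under $\cat{sSet}\leftrightarrows \cbring\op$ is the cofibrant replacement $\alpha_n\colon \sym^{bin}(\Z[-n])\to \Z^{K_n}$. Transporting $\alpha_n$ through the $(\sym^{bin},U)$-adjunction yields its restriction $\Z[-n]\to \Z^{K_n}$, which by the very construction of $\alpha_n$ (as the dualization of the counit $\Z\langle K_n\rangle\to\Z[n]$) is precisely the fundamental $n$-cocycle $\iota_n$ on $K_n$. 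Since the defining isomorphism $\Hom_{\cat{sSet}}(K_n,K_n)\cong Z^n(K_n;\Z)$ sends $\id_{K_n}$ to $\iota_n$, the map $\eta_n$ is identified with $\id_{K_n}$, and in particular is a weak equivalence. The care required here is in checking that the adjunction counits and the Dold--Kan identification are threaded consistently, so that $\alpha_n$, which is built from the fundamental cocycle, goes over to the identity on the nose rather than merely up to some automorphism of $K_n$.
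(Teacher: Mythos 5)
Your proposal is correct, and its first two steps are exactly the paper's: use Proposition \ref{prop : main for EM} to replace $\Z^{K_n}$ by the cofibrant object $F_n=\sym^{bin}(\Z[-n])$, and then use the simplicial $(\sym^{bin},U)$-adjunction to identify $\map_{\cbring}(F_n,\Z)$ with $\map_{\cat{cAb}}(\Z[-n],\Z)$. Where you diverge is in the endgame. The paper deliberately avoids the "delicate" identification you carry out: it only records that $\langle F_n\rangle$ is \emph{some} $K(\Z,n)$, and then detects the weak equivalence cohomologically, using the soft criterion that a map between $K(\Z,n)$'s is an equivalence as soon as $\iota_n$ is hit on $H^n$ (which it checks by passing to the adjoint map $\gamma_n\simeq\alpha_n$). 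You instead compute the simplicial mapping space on the nose, $\map_{\cat{cAb}}(\Z[-n],\Z)_k\cong Z^n(\Delta^k;\Z)\cong (K_n)_k$, and trace the adjunction units to show that $\eta_n$ \emph{is} the identity of $K_n$ under this identification; your tracing is correct, since the adjoint of the composite $K_n\to\map(\Z^{K_n},\Z)\xrightarrow{\alpha_n^*}\map(F_n,\Z)$ is $\alpha_n$, whose restriction along $\Z[-n]\to F_n$ is by construction the fundamental cocycle, and $\sigma\mapsto\sigma^*\iota_n$ is the identity of $Z^n(\Delta^\bullet;\Z)$. Your version gives a sharper conclusion (an explicit isomorphism rather than a mere weak equivalence) at the cost of the bookkeeping you yourself flag; the paper's detection argument trades that precision for robustness, needing only surjectivity onto $\iota_n$ in cohomology. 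Both are complete proofs.
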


\begin{proof}
Let us consider the composite
\[\beta_n:K_n\to\langle \Z^{K_n}\rangle\to \langle F_n\rangle\]
in which the second map is induced by the map $\alpha_n$. By the previous theorem, the second map is a weak equivalence, therefore, it is sufficient to prove that the composite map $\beta_n:K_n\to\langle F_n\rangle$ is a weak equivalence. 

This fact follows immediately from the following three claims.
\begin{enumerate}
\item The space $\langle F_n\rangle$ is an Eilenberg-MacLane of type $(\Z,n)$.
\item The class $\iota_n$ is in the image of $H^n(\beta_n)$, where $\iota_n$ denotes the class in $H^n(K_n;\Z)$ corresponding to $1\in\Z$ through the Hurewicz isomorphism~:
\[H^n(K_n;\Z)\cong\Hom(\pi_n(K_n),\Z)\cong\Z\]
\item A map $f:K_n\to X$ where $X$ is an Eilenberg-MacLane space of type $(\Z,n)$ is a weak equivalence if and only if $\iota_n$ is in the image of $H^n(f)$.
\end{enumerate}

The proof of claim (1) is through the free-forgetful adjunction between cosimplicial abelian groups and cosimplicial binomial rings~:
\[\langle F_n\rangle:=\map_{\cbring}(F_n,\Z)\cong\map_{\cat{cAb}}(\Z[-n],\Z)\]
(note that $F_n$ and $\Z[-n]$ are cofibrant in the respective model categories).

Now, we prove claim (3). Clearly the fact that $\iota_n$ is in the image is a necessary condition. Conversely, if $\iota_n$ is in the image, then the map $H^n(f)$ is an isomorphism (indeed a morphism of abelian groups that are abstractly isomorphic to $\Z$ is an isomorphism if and only if it is surjective). By the universal coefficient theorem, we deduce that $H_n(f)$ is an isomorphism. Finally by Hurewicz theorem, we deduce that $\pi_n(f)$ is an isomorphism. Since all the other homotopy groups are trivial, we conclude that $f$ is a weak equivalence.

Finally in order to prove (2), we consider the composite
\[\gamma_n:F_n\to\Z^{\langle F_n\rangle }\to \Z^{K_n}\]
where the first map is the counit of the adjunction and the second map is induced by $\beta_n$. Applying $H^n$ to this map, we see that it is enough to prove that $\iota_n$ is is the image of $H^n(\gamma_n)$. By definition, the map $\beta_n$ is adjoint to the map $\alpha_n:F_n\to \Z^{K_n}$, it follows that the composite $\gamma_n$ is homotopic to the map $\alpha_n$. But by construction the class $\iota_n$ is in the image of $H^n(\alpha_n)$.
\end{proof}

\section{Main theorem}

\begin{defi}\label{defi : convergent tower}
Let $\{X_n\}_{n\in\mathbb{N}}$ be a tower of spaces. We say that this tower is \emph{convergent} if the connectivity of the map $X_{n+1}\to X_n$ diverges to $+\infty$.
\end{defi}

\begin{lemm}\label{lemm : homology of convergent tower}
Let $R$ be a ring. Let $\{X_n\}_{n\in\mathbb{N}}$ be a convergent tower, then the canonical map
\[H_*(\on{holim}_n X_n;R)\to\on{lim}_nH_*(X_n;R)\]
is an isomorphism. The analogous statement holds for cohomology.
\end{lemm}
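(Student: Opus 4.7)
The plan is to show that, for a convergent tower, in each fixed (co)homological degree the towers $\{H_j(X_n;R)\}_n$ and $\{H^j(X_n;R)\}_n$ are eventually constant with respective values $H_j(\on{holim}_n X_n;R)$ and $H^j(\on{holim}_n X_n;R)$; both comparison maps then become isomorphisms in each fixed degree. The key preliminary is to show that the canonical projection $p_n \colon \on{holim}_m X_m \to X_n$ becomes arbitrarily highly connected as $n \to \infty$.

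For this connectivity claim, I would let $c_m$ denote the connectivity of $X_{m+1}\to X_m$, so that $c_m\to\infty$ by hypothesis. Given $k$, one picks $N$ with $c_m\ge k$ for all $m\ge N$; then the composite $X_m\to X_n$ is $k$-connected for every $m\ge n\ge N$, so for each $j<k$ the tower $\{\pi_j(X_m)\}_m$ is eventually constant. The Milnor short exact sequence
\[
\ast \to \on{lim}^1_m \pi_{j+1}(X_m) \to \pi_j(\on{holim}_m X_m) \to \on{lim}_m \pi_j(X_m) \to \ast
\]
then collapses to $\pi_j(\on{holim}_m X_m) \cong \pi_j(X_n)$ for $j<k$ and $n\ge N$, whence $p_n$ itself is $k$-connected. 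Converting this connectivity statement into (co)homology (via the relative Hurewicz theorem applied to the mapping cylinder of $p_n$, or equivalently via the Serre spectral sequence of its homotopy fibre), one obtains that $H_j(p_n;R)$ and $H^j(p_n;R)$ are isomorphisms for $j<k$ and $n\ge N$, so that the towers $\{H_j(X_n;R)\}_n$ and $\{H^j(X_n;R)\}_n$ are both eventually constant with the claimed values.

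For homology, this eventual constancy is the statement: $\on{lim}_n H_j(X_n;R) \cong H_j(\on{holim}_n X_n;R)$ holds in each fixed degree $j$. For cohomology, the eventual constancy of $\{H^{j-1}(X_n;R)\}_n$ kills its $\on{lim}^1$, so the Milnor short exact sequence
\[
0 \to \on{lim}^1_n H^{j-1}(X_n;R) \to H^j(\on{holim}_n X_n;R) \to \on{lim}_n H^j(X_n;R) \to 0
\]
collapses to the required isomorphism. The main subtlety, and the step requiring care, will be the passage from ``$p_n$ is $k$-connected'' to ``$H_j(p_n;R)$ is an isomorphism for $j<k$'' in the absence of simple connectivity: in full generality this requires the stages $X_n$ to be nilpotent (or otherwise well-behaved), but this hypothesis is present in all intended applications of the lemma.
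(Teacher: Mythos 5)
Your proof is correct and follows essentially the same route as the paper: both arguments show that the projection $\on{holim}_m X_m\to X_n$ is highly connected for $n$ large (your Milnor-sequence step is exactly what the paper compresses into ``the convergence hypothesis''), deduce that the (co)homology towers are eventually constant in each fixed degree, and conclude by a two-out-of-three argument. The only caveat you raise --- that passing from $k$-connectivity of $p_n$ to $H_j(p_n;R)$ being an isomorphism for $j<k$ might require nilpotence --- is in fact unnecessary: a $k$-connected map is, up to homotopy, the inclusion of a subcomplex whose complement has cells only in dimensions $>k$, so it induces isomorphisms on $H_j(-;R)$ for $j<k$ with any coefficients and no hypotheses on $\pi_1$.
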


\begin{proof}
Let us fix a homological degree $i$. For any $n$, the map under consideration fits in a commutative diagram
\[
\xymatrix{
H_i(X_n;R)\ar[d]\ar[dr]& \\
H_i(\on{holim}_nX_n;R)\ar[r]&\lim_nH_i(X_n;R)
}
\]
By the convergence hypothesis, the map $X_n\to \on{holim}_nX_n$ induces an isomorphism in a range of homotopy groups that diverges to +$\infty$ with $n$. Using standard algebraic topology arguments, we deduce that the vertical map is an isomorphism for $n$ large enough. Similarly, the map $H_i(X_{n+1};R)\to H_i(X_n;R)$ is an isomorphism for $n$ large so the diagonal map is an isomorphism for $n$ large. 
\end{proof}

\begin{lemm}\label{lemm : nilpotent spaces}
Let $U\subset\cat{sSet}$ be a full subcategory of the category of simplicial sets satisfying the following conditions
\begin{enumerate}
\item If $X\in U$ and $Y$ is weakly equivalent to $X$, then $Y\in U$.
\item If $X$ and $Y$ are in $U$ so is $X\times Y$.
\item If $X\to Y\to Z$ is a homotopy fiber sequence in which $Z$ is in $U$ and simply connected and $Y$ is in $U$, then $X$ is in $U$. 
\item If $\ldots\to X_n\to X_{n-1}\to \ldots X_0$ is a convergent tower in which all spaces are in $U$ and if the maps $X_n\to X_{n-1}$ are principal fibrations with fiber $K(A,n)$ with $A$ a finitely generated abelian group and $n\geq 1$, then the homotopy limit of the tower is an element of $U$.
\item The spaces $K(\Z,n)$ are in $U$ for any $n\geq 1$.
\end{enumerate}
Then $U$ contains all homotopy types that are nilpotent and of finite type.
\end{lemm}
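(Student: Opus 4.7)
The plan is to prove the conclusion in two stages: first show that $U$ contains every Eilenberg--MacLane space $K(A,n)$ for $A$ a finitely generated abelian group and $n\geq 1$, then show that every nilpotent finite type space is assembled from such pieces by the operations encoded in the hypotheses.

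For the first stage, I would begin by deriving $\ast\in U$ from the fiber sequence $\ast\to K(\Z,2)\xrightarrow{\mathrm{id}} K(\Z,2)$ via (3) and (5) (the base $K(\Z,2)$ is simply connected and lies in $U$). Next, for any $n\geq 1$ and $m>0$, the short exact sequence $0\to\Z\xrightarrow{m}\Z\to\Z/m\to 0$ yields a fiber sequence
\[
K(\Z/m,n)\to K(\Z,n+1)\xrightarrow{\cdot m} K(\Z,n+1)
\]
whose base is simply connected and whose total space is in $U$ by (5); condition (3) therefore forces $K(\Z/m,n)\in U$. Decomposing any finitely generated abelian group as $A\cong\Z^r\oplus\bigoplus_{j=1}^{s}\Z/m_j$, the factorization $K(A,n)\simeq K(\Z,n)^r\times\prod_{j}K(\Z/m_j,n)$ together with (1), (2) then gives $K(A,n)\in U$ for all such $A$ and all $n\geq 1$.

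For the second stage, I would invoke the classical principal Postnikov refinement of nilpotent spaces (Bousfield--Kan, Hilton--Mislin--Roitberg): any nilpotent space $X$ of finite type is weakly equivalent to the homotopy limit of a convergent tower $\ldots\to X_k\to X_{k-1}\to\ldots\to X_0=\ast$ in which each map $X_k\to X_{k-1}$ is a principal fibration with fiber $K(A_k,n_k)$, where $A_k$ is a finitely generated abelian group and $n_k\geq 1$. Principality means that $X_k$ is the homotopy fiber of a classifying map $X_{k-1}\to K(A_k,n_k+1)$; since $n_k+1\geq 2$, this base is simply connected and lies in $U$ by the first stage. An induction on $k$ then yields $X_k\in U$ for every $k$: the base case is $\ast\in U$ and the inductive step is (3) applied to $X_k\to X_{k-1}\to K(A_k,n_k+1)$. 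Finally the tower is convergent by construction, so (4) delivers $X\simeq\on{holim}_k X_k\in U$, and (1) transports the conclusion to every space in the weak equivalence class of $X$.

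The main obstacle is really the invocation of the principal refinement: nilpotence of $X$ is precisely the hypothesis that allows one to refine the ordinary Postnikov tower into principal fibrations with finitely generated abelian fibers, starting from the lower central series of $\pi_1(X)$ at level one and then proceeding up through the filtration of each $\pi_n(X)$ induced by the nilpotent $\pi_1$-action. Once this standard input is granted, the conditions (1)--(5) have been calibrated so that the remainder of the argument proceeds mechanically, with closure under weak equivalence (1) used tacitly throughout to replace spaces by the specific models to which the other conditions apply.
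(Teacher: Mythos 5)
Your proof is correct and follows essentially the same strategy as the paper: first establish $K(A,n)\in U$ for all finitely generated $A$ and $n\geq 1$ using conditions (2), (3), (5), then feed the principal refinement of the Postnikov tower of a nilpotent finite type space into conditions (3) and (4). The only (immaterial) difference is that you reach torsion coefficients via the fiber sequences $K(\Z/m,n)\to K(\Z,n+1)\xrightarrow{\cdot m}K(\Z,n+1)$ plus the structure theorem, whereas the paper uses a general free resolution $0\to F_1\to F_2\to A\to 0$ and the fiber sequence $K(A,n-1)\to K(F_1,n)\to K(F_2,n)$ directly; you also make explicit the base case $\ast\in U$, which the paper leaves tacit.
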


\begin{proof}
Conditions (2) and (5) imply that the spaces $K(A,n)$ are in $U$ for any finitely generated free abelian group $A$ and any integer $n\geq 1$. Now let $A$ be any finitely generated abelian group and $0\to F_1\to F_2\to A\to 0$ be a short exact sequence with $F_1$ and $F_2$ two finitely generated free abelian groups. Then, we have a fiber sequence
\[K(A,n-1)\to K(F_1,n)\to K(F_2,n)\]
which, by condition (3), implies that $K(A,n)$ is in $U$ for any $n\geq 1$. By \cite[Proposition V.6.1]{goerssjardine}, a space that is nilpotent of finite type can be expressed as the limit of a tower
\[\ldots\to X_{n+1}\to X_n\to X_{n-1}\to \ldots\]
in which, for all $n$, the map $X_n\to X_{n-1}$ fits in a fiber sequence
\[X_n\to X_{n-1}\to K(A_n,k_n)\]
where $A_n$ is a finitely generated abelian group and $k_n\geq 2$ grows to infinity with $n$. Therefore, condition (4) immediately gives us the conclusion.
\end{proof}

\begin{theo}\label{theo : main}
The derived unit map
\[X\to\langle \Z^X\rangle\]
is a weak equivalence if $X$ is a simplicial set which is connected nilpotent and of finite type.
\end{theo}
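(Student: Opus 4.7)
The plan is to apply Lemma \ref{lemm : nilpotent spaces} to the full subcategory $U \subset \cat{sSet}$ consisting of spaces $X$ for which the derived unit $X \to \langle \Z^X \rangle$ is a weak equivalence. Condition (1) of the lemma is automatic from the fact that both $X \mapsto X$ and $X \mapsto \langle \Z^X \rangle$ are homotopy invariant, and condition (5) is precisely Proposition \ref{prop : main theorem for EM spaces}. The verification of the remaining three conditions (2), (3), (4) all exploit the same principle: the functor $X \mapsto \Z^X$ is a left Quillen functor to $\cbring\op$, so the derived right adjoint $\langle - \rangle$ carries homotopy colimits of cosimplicial binomial rings to homotopy limits of spaces.

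For condition (2), I will show that for nilpotent finite type $X$ and $Y$ the Künneth map $\Z^X \otimes^{\L}_{\Z} \Z^Y \to \Z^{X \times Y}$ is a quasi-isomorphism. This uses that $\Z^X$ is degreewise torsion free, so the tensor product is already derived, together with the cohomological Künneth formula which applies since $H^*(X)$ is finitely generated in each degree. By Proposition \ref{prop : U preserves hocolim}, this derived tensor product coincides with the derived coproduct in $\cbring$. Applying $\langle - \rangle$, which sends coproducts in $\cbring$ to products in $\cat{sSet}$, identifies $\langle \Z^{X \times Y} \rangle$ with $\langle \Z^X \rangle \times \langle \Z^Y \rangle$, and the unit for $X \times Y$ becomes the product of the units for $X$ and $Y$.

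For condition (3), given a homotopy fiber sequence $X \to Y \to Z$ with $Z$ simply connected and $Y, Z \in U$, I appeal to the cohomological Eilenberg--Moore theorem of Toën \cite[Theorem A.1]{toenprobleme} (already used in the proof of Proposition \ref{prop : main for EM}), which yields an equivalence $\Z^X \simeq \Z \otimes^{\L}_{\Z^Z} \Z^Y$ in $\cring$. Proposition \ref{prop : U preserves hocolim} lifts this to an equivalence in $\cbring$. Applying $\langle - \rangle$ converts this derived pushout into a homotopy pullback $\langle \Z^X \rangle \simeq \ast \times_{\langle \Z^Z \rangle} \langle \Z^Y \rangle$, and the unit map fits into a morphism between this pullback and the original fiber sequence; since the units for $Y$ and $Z$ are equivalences, so is the unit for $X$.

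For condition (4), given a convergent tower with limit $X_\infty$, the cohomological version of Lemma \ref{lemm : homology of convergent tower} shows that $\on{colim}_n \Z^{X_n} \to \Z^{X_\infty}$ is a quasi-isomorphism. Since filtered colimits are automatically derived in $\cat{cAb}$, and in $\cbring$ by Proposition \ref{prop : U preserves hocolim}, this exhibits $\Z^{X_\infty}$ as the homotopy colimit of the $\Z^{X_n}$. Applying $\langle - \rangle$ and using the hypothesis that each $X_n \in U$ then gives $\langle \Z^{X_\infty} \rangle \simeq \on{holim}_n \langle \Z^{X_n} \rangle \simeq \on{holim}_n X_n \simeq X_\infty$. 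I expect the main obstacle to be condition (3), since the Eilenberg--Moore equivalence in $\cring$ is delicate and its validity hinges on the simple connectivity hypothesis that has been built into Lemma \ref{lemm : nilpotent spaces} precisely for this reason.
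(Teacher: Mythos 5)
Your proposal is correct and follows essentially the same route as the paper: reduce to the five conditions of Lemma \ref{lemm : nilpotent spaces}, verify (2) by the K\"unneth equivalence together with Proposition \ref{prop : U preserves hocolim}, (3) by To\"en's Eilenberg--Moore theorem, (4) by the cohomological form of Lemma \ref{lemm : homology of convergent tower}, and (5) by Proposition \ref{prop : main theorem for EM spaces}, using throughout that $\langle-\rangle$ carries homotopy colimits of cosimplicial binomial rings to homotopy limits of spaces. The extra details you supply (torsion-freeness making the tensor product derived, the finite-type hypothesis for K\"unneth) are consistent with the paper's argument.
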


\begin{proof}
Let us denote by $U$ the full subcategory of $\cat{sSet}$ spanned by the simplicial sets satisfying the conditions of the theorem. By the previous lemma, it suffices to prove that $U$ satisfies the five conditions. Clearly, condition (1) is satisfied. Condition (2) follows from the K\"unneth quasi-isomorphism
\[\Z^{X\times Y}\simeq \Z^X\otimes\Z^Y.\]
and the observation that the tensor product in $\cbring$ is the homotopy coproduct (Proposition \ref{prop : U preserves hocolim}). Condition (3) follows from the convergence of the Eilenberg-Moore spectral sequence. Without loss of generality, we may assume that $Y\to Z$ is a fibration and that $X$ is the actual fiber at a given base point $y\in Y$. Then by \cite[Theorem A.1]{toenprobleme}, the natural map
\[\Z^{Y}\otimes^{\mathbb{L}}_{\Z^Z}\Z\to\Z^X\]
is a quasi-isomorphism of cosimplicial rings and cosimplicial binomial rings (Proposition \ref{prop : U preserves hocolim}). Since, the functor $\langle -\rangle$ sends homotopy colimits to homotopy limits, we obtain the desired result.

We prove (4). Let $\ldots\to X_n\to X_{n-1}\to \ldots X_0$ be a convergent tower in which all spaces are in $U$ (the other conditions are not necessary). Let us denote by $Y$ the homotopy limit of the tower. Then the map
\[H^k(X_n;\Z)\to H^k(Y;\Z)\]
is an isomorphism for $n$ big enough. It follows that the map
\[\mathrm{hocolim}_n\Z^{X_n}\to\Z^Y\]
is a weak equivalence. Again, using that the functor $\langle -\rangle$ sends homotopy colimits to homotopy limits, we obtain the desired result.

Finally claim (5) is exactly Proposition \ref{prop : main theorem for EM spaces}.
\end{proof}

\section{Beyond nilpotent spaces}

In this section, we shall prove the following Theorem

\begin{theo}\label{theo : BK}
Let $X$ be a connected simplicial set of finite type, then the obvious map
\[X\to \langle\Z^X\rangle\]
is weakly equivalent to the Bousfield-Kan $\Z$-completion map $X\to \Z_{\infty}(X)$.
\end{theo}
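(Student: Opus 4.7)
The plan is to express both $\Z_{\infty}(X)$ and $\langle\Z^{X}\rangle$ as the homotopy limit of a common cosimplicial family of spaces to which Theorem~\ref{theo : main} already applies. Write $R^{\bullet}X$ for the Bousfield--Kan cosimplicial resolution built from the monad $R=U\Z\langle-\rangle$ on pointed connected simplicial sets, so that $\Z_{\infty}(X)\simeq\on{holim}_{\Delta}R^{\bullet}X$. An induction using the Dold--Thom isomorphism $\pi_{k}R^{n}X\cong\tilde{H}_{k}(R^{n-1}X;\Z)$ shows that each $R^{n}X$ is a connected simplicial abelian group with finitely generated homotopy groups in each degree, hence nilpotent and of finite type. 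Theorem~\ref{theo : main} therefore provides a natural cosimplicial weak equivalence
\[R^{\bullet}X\xrightarrow{\simeq}\langle\Z^{R^{\bullet}X}\rangle.\]

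Taking $\on{holim}_{\Delta}$ on both sides and using that $\langle-\rangle$, being a right adjoint, sends homotopy colimits in $c\mathcal{BR}ing$ to homotopy limits in $\mathcal{S}$, I obtain
\[\Z_{\infty}(X)\simeq\on{holim}_{\Delta}\langle\Z^{R^{\bullet}X}\rangle\simeq\bigl\langle\on{hocolim}_{\Delta\op}\Z^{R^{\bullet}X}\bigr\rangle.\]
The theorem thus reduces to showing that the augmented simplicial object $\Z^{R^{\bullet}X}\to\Z^{X}$ in $c\mathcal{BR}ing$ is a homotopy colimit diagram. I argue this through the underlying cosimplicial abelian groups. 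The coaugmented cosimplicial object $X\to R^{\bullet}X$ in $\cat{sSet}$, after applying $\Z\langle-\rangle$ in each cosimplicial degree, becomes the augmented cosimplicial simplicial abelian group $\Z\langle X\rangle\to\Z\langle R^{\bullet}X\rangle$, which is split: the extra codegeneracies are supplied by the counit of the adjunction $\Z\langle-\rangle\dashv U$ acting at the outermost position. Applying $\Hom_{\cat{Ab}}(-,\Z)$ in each simplicial degree turns this into a split augmented \emph{simplicial} object $\Z^{R^{\bullet}X}\to\Z^{X}$ in $\cat{cAb}$. Split augmented simplicial objects are absolute colimit diagrams, hence in particular homotopy colimit diagrams. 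By the composition of Propositions~\ref{prop: W preserves hocolim} and~\ref{prop : U preserves hocolim}, the forgetful functor $c\mathcal{BR}ing\to\cat{cAb}$ preserves and reflects homotopy colimits indexed by $\Delta\op$, so the augmented simplicial diagram is also a homotopy colimit in $c\mathcal{BR}ing$.

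The main obstacle I expect lies in this last step. The extra degeneracies arising from the dualization are only maps of cosimplicial abelian groups and not of cosimplicial binomial rings, so the splitness cannot be witnessed directly in $c\mathcal{BR}ing$ and one must route the argument through $\cat{cAb}$, leaning crucially on the reflection statement of Proposition~\ref{prop : U preserves hocolim}. A secondary technicality is ensuring that one uses the reduced, pointed form of the Bousfield--Kan construction so that each $R^{n}X$ is genuinely connected, which is required for Theorem~\ref{theo : main} to apply at each stage.
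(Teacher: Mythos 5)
Your proof is correct, but it takes a genuinely different route from the paper. The paper runs the argument through Isaksen's pro-space localizations: it takes a fibrant replacement $X\to\{Y_\alpha\}$ in the $\Z$-cohomological model structure on pro-spaces, invokes Proposition \ref{prop : equivalence} (a nontrivial input showing that for finite type spaces this is also a fibrant replacement for $\Z$-homology), arranges each $Y_\alpha$ to be nilpotent of finite type via Isaksen's structural results, and then compares $\on{holim}_\alpha Y_\alpha$ with the tower $\{P_n\Z_n(X)\}$ to identify it with $\Z_\infty(X)$, while $\langle\Z^X\rangle\simeq\on{holim}_\alpha\langle\Z^{Y_\alpha}\rangle\simeq\on{holim}_\alpha Y_\alpha$ by Theorem \ref{theo : main}. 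You instead work directly with the Bousfield--Kan cosimplicial resolution $R^\bullet X$, observing that each stage is a connected simplicial abelian group of finite type (hence nilpotent of finite type, so Theorem \ref{theo : main} applies levelwise), and reduce everything to the statement that $\Z^{R^\bullet X}\to\Z^X$ is a $\Delta\op$-indexed homotopy colimit diagram in $c\mathcal{BR}ing$, which you get from the standard extra-codegeneracy splitting of $\Z\langle R^\bullet X\rangle$ together with the fact that the forgetful functor to $\cat{cAb}$ preserves and reflects such colimits (Propositions \ref{prop: W preserves hocolim} and \ref{prop : U preserves hocolim}). Your identification of the obstacle is exactly right: the splitting lives only in $\cat{cAb}$, and the reflection statement is what lets you import it. What your approach buys is self-containedness — it avoids pro-homotopy theory and the comparison of cohomological versus homological localizations entirely; what the paper's approach buys is that the same pro-space machinery is reused immediately afterwards for the $R$-localization statements (Theorem \ref{theo : localizations}), where a single functorial resolution by nilpotent finite type spaces is needed for several coefficient rings at once. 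The only points you leave implicit — Reedy fibrancy of $R^\bullet X$ so that $\on{Tot}$ computes the homotopy limit, and the naturality-of-the-unit check identifying the composite equivalence with the completion map — are routine.
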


We shall prove this theorem by using the work of Isaksen. In \cite{isaksencompletions}, for any ring $R$, Isaksen constructs two model structures on the category of pro-simplicial sets. In one of them the weak equivalence are the $R$-cohomology equivalences and  in the other one, they are the $R$-homology equivalences. Recall that given a pro-simplicial set $Y=\{Y_{\alpha}\}_{\alpha\in A}$. Its cohomology is defined by the formula
\[H^*(Y;R):=\on{colim}_{\alpha}H^*(Y_{\alpha};R)\]
and its homology is the pro-$R$-module $\alpha\mapsto H_*(Y_{\alpha};R)$. 

We denote the cohomological (resp. homological) model structure on $\cat{Pro(sSet)}$ by $L^R\cat{Pro(sSet)}$ (resp. $L_R\cat{Pro(sSet)}$). Isaksen observes that a homology equivalence is a cohomology equivalence (by a pro-version of the universal coefficient theorem) but the converse does not hold. The two model structure share the same cofibrations so the cohomological model structure is a localization of the homological one.

\begin{prop}\label{prop : equivalence}
Let $X$ be a connected space of finite type. Let $X\to \{Y_{\alpha}\}$ be a fibrant replacement of $X$ in $L^{\Z}\cat{Pro(sSet)}$. Then the map $X\to \{Y_{\alpha}\}$ is a homology equivalence and so is a fibrant replacement in $L_\Z\cat{Pro(sSet)}$.
\end{prop}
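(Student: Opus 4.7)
The plan is to separately verify that $X \to \{Y_{\alpha}\}$ is a $\Z$-homology equivalence and that the pro-space $\{Y_{\alpha}\}$ is fibrant in the homological model structure, which together give the conclusion.

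The fibrancy statement is immediate. As recalled just before the proposition, homology equivalences imply cohomology equivalences and the two model structures share the same cofibrations, so $L^{\Z}\cat{Pro(sSet)}$ is a left Bousfield localization of $L_{\Z}\cat{Pro(sSet)}$. Consequently every cohomologically fibrant object is automatically fibrant in the homological structure.

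For the homology equivalence, I would lean on Isaksen's explicit description of fibrant objects in $L^{\Z}\cat{Pro(sSet)}$. Since $X$ is connected and of finite type, one should be able to arrange $\{Y_{\alpha}\}$ as a cofiltered diagram of Kan complexes each with finitely generated $\Z$-cohomology (hence also finitely generated $\Z$-homology, by the universal coefficient theorem) in every degree. Within this finite-type world, the cohomology-equivalence condition $\on{colim}_{\alpha} H^n(Y_{\alpha};\Z) \cong H^n(X;\Z)$ can be enhanced, via a pro-version of the universal coefficient theorem, into a pro-isomorphism $H_n(X;\Z) \to \{H_n(Y_{\alpha};\Z)\}$. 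A useful alternative to keep in mind would be to identify $\{Y_{\alpha}\}$ up to pro-equivalence with the Bousfield-Kan tower $\{\Z_s X\}$, whose $\Z$-homology equivalence with $X$ is classical; this would give the statement without reworking the pro-UCT by hand.

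The main obstacle is bridging the gap between a colimit-level cohomology isomorphism and a pro-level homology isomorphism. In general these are not equivalent: a colimit of $H^n$ can collapse to something small while the pro-system of $H_n$ remains non-trivial (as the usual pathological towers of finite abelian groups show). The finite-type assumption on $X$ together with Isaksen's cohomological fibrancy, which forces each level to remain finite type in a controlled way, is what should make the enhancement go through. Pinning down the precise form of Isaksen's fibrant replacement — ideally as a tower of nilpotent finite-type spaces along Postnikov-type principal fibrations — is the technical crux of the argument.
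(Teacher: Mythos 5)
Your handling of the fibrancy claim is correct and is exactly what the paper leaves implicit: $L^{\Z}\cat{Pro(sSet)}$ is a further left Bousfield localization of $L_{\Z}\cat{Pro(sSet)}$, so its fibrant objects are fibrant there too. The problem is that the heart of the proposition --- upgrading the isomorphism $\on{colim}_\alpha H^*(Y_\alpha;\Z)\to H^*(X;\Z)$ to a pro-isomorphism on integral homology --- is precisely the step you label ``the main obstacle'' and ``the technical crux'' and then do not carry out, so as written the argument has a genuine gap. The paper's route is to show that $\on{colim}_\alpha H^i(Y_\alpha;M)\to H^i(X;M)$ is an isomorphism for \emph{every} abelian group $M$, which is what detects the pro-isomorphism on homology. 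This holds for $M=\Z$ by hypothesis, hence for all finitely generated $M$ by a five-lemma argument, and the passage to arbitrary $M$ is a filtered-colimit argument: one compares the two universal-coefficient exact sequences for $X$ and for the pro-system and checks that the four outer terms commute with filtered colimits in the $M$ variable. That last point is where your finite-type intuition enters, but in a specific way: by Isaksen's Theorem 3.3 each $Y_\alpha$ sits atop a finite tower of principal fibrations with fibers $K(\Z,n)$, so each $H_i(Y_\alpha;\Z)$ is finitely generated, and therefore $\on{colim}_\alpha\Hom(H_i(Y_\alpha;\Z),-)$ and $\on{colim}_\alpha\on{Ext}(H_i(Y_\alpha;\Z),-)$ commute with filtered colimits (Lemma \ref{lemm : hom and ext preserve filtered limits}). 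So the bridge is a bootstrap over coefficient groups, not a direct ``pro-UCT'' applied to $\Z$-coefficients alone; with only $M=\Z$ in hand the implication you want is exactly the kind of colimit-versus-pro statement you yourself note can fail.

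Your proposed shortcut --- identifying $\{Y_\alpha\}$ up to pro-equivalence with (a strict fibrant replacement of) the Bousfield--Kan tower $\{\Z_sX\}$ --- is circular. That tower is a fibrant replacement of $X$ in the \emph{homological} structure $L_{\Z}\cat{Pro(sSet)}$, and to conclude that $\{Y_\alpha\}$ is equivalent to it there you would already need to know that $X\to\{Y_\alpha\}$ is a homology equivalence, which is the statement being proved; comparing them only in $L^{\Z}\cat{Pro(sSet)}$ yields nothing stronger than the cohomology equivalence you started with. This comparison is exactly how the paper proceeds in the proof of Theorem \ref{theo : BK}, but only \emph{after} Proposition \ref{prop : equivalence} has been established.
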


\begin{proof}
We wish to show that the obvious map 
\[\on{colim}_\alpha H^i(Y_\alpha;M)\to H^i(X;M)\]
is an isomorphism for any abelian group $M$. By assumption it is an isomorphism for $M=\Z$. From this we deduce easily that it is an isomorphism for $M$ finitely generated. In order to prove it for a general $M$ it suffices that both the source and the target preserve filtered colimits in the $M$ variable.

For this, we consider the following commutative diagram
\[\xymatrix{
\on{colim}_{\alpha}\on{Ext}(H_{i-1}(Y_{\alpha};\Z),M)\ar[r]\ar[d]&\on{colim}_{\alpha}H^i(Y_{\alpha},M)\ar[r]\ar[d]&\on{colim}_{\alpha}\Hom(H_i(Y_{\alpha};\Z),M)\ar[d]\\
\on{Ext}(H_{i-1}(X;\Z),M)\ar[r]&H^i(X,M)\ar[r]&\Hom(H_i(X;\Z),M)\\
}
\]
in which both rows are the short exact sequences given by the universal coefficient theorem. The map of interest to us is the middle vertical map. The exactness of these rows implies that it is enough to prove that the four external nodes in this diagram preserve filtered colimits in the $M$ variable. By Lemma \ref{lemm : hom and ext preserve filtered limits} below, it suffices to prove that, for each $i$, $H_i(Y_{\alpha};\Z)$ is isomorphic to a pro-finitely generated abelian group. By \cite[Theorem 3.3]{isaksencompletions}, we may assume that $Y_{\alpha}$ is at the top of a finite tower of fibrations with fibers of the form $K(\Z,n)$ with $n\geq 1$. It is well-known that the homology of Eilenberg-MacLane spaces of type $(\Z,n)$ is finitely generated in each degrees. Thus, an easy inductive argument using the Serre spectral sequence shows that this is also the case for the homology of the spaces $Y_{\alpha}$.
\end{proof}

\begin{rem}
It is clear that the proof above holds when $\Z$ is replaced by a principal ideal domain. The case of a field coefficient is treated in \cite[Proposition 3.11]{hoyoisetale}. This Proposition gives a partial answer to \cite[Question 11.2]{isaksencompletions}
\end{rem}

\begin{lemm}\label{lemm : hom and ext preserve filtered limits}
Let $R$ be any ring. Let $\{A_{\alpha}\}$ be a pro-$R$-module group such that $A_{\alpha}$ is finitely generated for each $\alpha$. Then the functor
\[M\mapsto \Hom_{\cat{Pro(Mod}_R)}(\{A_{\alpha}\},M)\]
from $\cat{Mod}_R$ to $\cat{Mod}_R$ preserves filtered colimits. The same statement holds for the functor
\[M\mapsto \on{Ext}_{\cat{Pro(Mod}_R)}(\{A_{\alpha}\},M)\]
\end{lemm}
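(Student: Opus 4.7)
The plan is to reduce both assertions to the well-known pointwise statement that $\Hom_R(A,-)$ and $\Ext^1_R(A,-)$ commute with filtered colimits when $A$ has sufficient finiteness, and then combine this with the ``colim--out--of--pro'' formula.

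First I would establish, for constant target $M$, the identifications
\[\Hom_{\cat{Pro(Mod}_R)}(\{A_\alpha\}, M)\cong\on{colim}_\alpha\Hom_R(A_\alpha,M)\]
and
\[\on{Ext}^1_{\cat{Pro(Mod}_R)}(\{A_\alpha\}, M)\cong\on{colim}_\alpha\on{Ext}^1_R(A_\alpha,M).\]
The first is the defining formula for hom-sets out of a pro-object into a constant one. For the second, I would choose a functorial projective resolution $P^\alpha_\bullet\to A_\alpha$ (for instance the bar resolution) in $\cat{Mod}_R$ for each $\alpha$, so that $\{P^\alpha_\bullet\}_\alpha$ is a levelwise projective resolution of $\{A_\alpha\}$ in the pro-category. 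Applying the Hom formula level by level and exchanging $H^1$ with the filtered colimit in $\alpha$ (using that filtered colimits of abelian groups are exact) yields the Ext identification.

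Given a filtered system $\{M_i\}_{i\in I}$, I would then interchange the two filtered colimits:
\[\on{colim}_i\on{colim}_\alpha\Hom_R(A_\alpha,M_i)\cong\on{colim}_\alpha\on{colim}_i\Hom_R(A_\alpha,M_i),\]
and similarly for $\on{Ext}^1$. This reduces the whole lemma to the single-module statements
\[\on{colim}_i\Hom_R(A_\alpha,M_i)\cong\Hom_R(A_\alpha,\on{colim}_iM_i),\quad \on{colim}_i\on{Ext}^1_R(A_\alpha,M_i)\cong\on{Ext}^1_R(A_\alpha,\on{colim}_iM_i),\]
which I would verify for each $\alpha$ separately.

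The hard part is this last pointwise step: for $\Hom_R(A_\alpha,-)$ to commute with filtered colimits one needs $A_\alpha$ to be compact, i.e.\ finitely presented, and for $\Ext^1_R(A_\alpha,-)$ to do the same one needs $A_\alpha$ to admit a resolution whose zeroth and first terms are finitely generated projectives (property $\mathrm{FP}_2$). Thus the ``finitely generated'' hypothesis of the lemma must be read as carrying this additional level of finite presentation; this is automatic whenever $R$ is coherent, and in particular whenever $R$ is (left-)Noetherian, in which case every finitely generated module is $\mathrm{FP}_\infty$ and admits resolutions by finitely generated projectives in every degree. The case of interest in \cref{prop : equivalence}, namely $R=\Z$, lies in this Noetherian regime, so both pointwise statements hold and combining them with the colimit interchange above finishes the argument.
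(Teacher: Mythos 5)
Your overall strategy coincides with the paper's: identify $\Hom_{\cat{Pro(Mod}_R)}(\{A_\alpha\},M)$ and $\on{Ext}_{\cat{Pro(Mod}_R)}(\{A_\alpha\},M)$ with $\on{colim}_\alpha\Hom_R(A_\alpha,M)$ and $\on{colim}_\alpha\on{Ext}_R(A_\alpha,M)$, and then interchange the two filtered colimits, reducing to the pointwise statements for each $A_\alpha$. The one place you genuinely diverge is the second identification: the paper simply quotes \cite[Lemma 4.2]{isaksencompletions}, whereas you propose to derive it from a levelwise functorial projective resolution $\{P^\alpha_\bullet\}$. That derivation has a gap as written: you need to know that a levelwise projective resolution computes $\on{Ext}$ in the abelian category $\cat{Pro(Mod}_R)$, and levelwise projective pro-objects are not obviously projective there; the agreement of this ``strict'' Ext with the categorical one is essentially the content of the Isaksen lemma the paper invokes, so you should either cite it or supply the missing comparison. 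On the other hand, your caveat about finite generation versus finite presentation is well taken and is actually a point the paper glosses over: for an arbitrary ring $R$, $\Hom_R(A,-)$ preserves filtered colimits only when $A$ is finitely presented (and $\on{Ext}^1_R(A,-)$ needs $A$ of type $\mathrm{FP}_2$), so the hypothesis ``finitely generated over any ring'' is too weak in general; since the only application is $R=\Z$, which is Noetherian, your reading of the hypothesis is the correct repair and nothing is lost.
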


\begin{proof}
This functor is the filtered colimit of the functors
\[M\mapsto \Hom_{\cat{Mod}_R}(A_{\alpha},M)\]
which all preserve filtered colimits. The case of $\on{Ext}$ is similar since by \cite[Lemma 4.2]{isaksencompletions}, we have an isomorphism
\[\on{Ext}_{\cat{Pro(Mod}_R)}(\{A_{\alpha}\},M)\cong \on{colim}_{\alpha}\on{Ext}_{\cat{Mod}_R}(A_{\alpha},M).\]
\end{proof}

\begin{proof}[Proof of Theorem \ref{theo : BK}]

Let $X\to Y=\{Y_{\alpha}\}_{\alpha\in A}$ be a fibrant replacement of $X$ in $L^\Z\cat{Pro(sSet)}$. By the previous proposition, this is also a fibrant replacement in $L_{\Z}\cat{Pro(sSet)}$. By \cite[Theorem 3.3]{isaksencompletions}, we may assume that each space $Y_\alpha$ is nilpotent in the sense of \cite[Definition 3.1]{isaksencompletions}. Such spaces are not necessarily connected but since $X$ is connected, we may restrict to the connected component $Z_\alpha$ of each $Y_\alpha$ that is hit by the map from $X$. By \cite[Lemma 3.6]{isaksencompletions}, this component is still nilpotent in the sense of \cite[Definition 3.1]{isaksencompletions} and by \cite[Paragraph after question 10.4]{isaksencompletions}, they are nilpotent in the usual sense. Moreover the map $X\to\{Z_\alpha\}$ is clearly still a homology equivalence.

By \cite[Proposition 7.3]{isaksencompletions}, another fibrant replacement of $X$ is given by a strict fibrant replacement of the tower $n\mapsto P_n\Z_n(X)$ with $P_n$ the $n$-th Postnikov section. In other words, the two pro-spaces $\{Z_{\alpha}\}$ and $\{P_n\Z_n(X)\}$ are weakly equivalent in the strict model structure on pro-spaces, so we have a weak equivalence
\[\on{holim}Z_{\alpha}\simeq \on{holim}P_n\Z_n(X)\simeq \Z_{\infty}(X)\]
where the last weak equivalence follows from Milnor's short exact sequence. On the other hand, since $X\to\{Z_{\alpha}\}$ is a $\Z$-cohomology equivalence, we have a weak equivalence of cosimplicial binomial rings $\on{hocolim}\Z^{Z_{\alpha}}\to\Z^X$ and so we obtain a chain of weak equivalences
\[\langle \Z^X\rangle\simeq \langle\on{hocolim}\Z^{Z_{\alpha}}\rangle\simeq \on{holim}Z_{\alpha}\simeq \Z_{\infty}(X).\]
where the second equivalence comes from the fact that the spaces $Z_{\alpha}$ are nilpotent of finite type and Theorem \ref{theo : main}.
\end{proof}

\section{Other localizations}

In this section, for $R$ a commutative ring, we denote by $X\to L_R(X)$ the  localization of a space $X$ with respect to $R$-homology isomorphism. 

\begin{prop}\label{prop : localization}
Let $R$ be a subring of $\Q$ or the ring  $\F_p$ for $p$ a prime number. Let $F:\cat{sSet}\to\cat{sSet}$ be a functor with a natural transformation $\id\to F$ satisfying the following properties.
\begin{enumerate}
\item It preserves weak equivalences.
\item It preserves products up to weak equivalences, i.e. the canonical map
\[F(X\times Y)\to F(X)\times F(Y)\]
is a weak equivalence for all spaces $X$ and $Y$.
\item It sends homotopy fiber sequences with simply connected base to homotopy fiber sequences.
\item It preserves homotopy limit of convergent towers of principal fibration with fibers $K(A,n)$ with $n\geq 1$  and $A$ a finitely generated abelian group.
\item The map $K(\Z,n)\to F(K(\Z,n))$ is an $R$-localization for any $n\geq 1$.
\end{enumerate}
Then, the map $X\to F(X)$ is an $R$-localization for any finite type nilpotent space. 
\end{prop}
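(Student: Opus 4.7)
The plan is to apply Lemma \ref{lemm : nilpotent spaces} to the full subcategory $U \subset \cat{sSet}$ consisting of those $X$ for which the unit $X \to F(X)$ is an $R$-localization, i.e.\ $F(X)$ is $R$-local and the map is an $R$-homology equivalence. In the two cases of interest this notion is well-understood: for $R \subset \Q$ it is the usual localization inverting primes not in $R$, and for $R = \F_p$ on nilpotent finite-type spaces it coincides with Bousfield--Kan $p$-completion. The five conditions of the lemma will then be checked using the corresponding hypotheses (1)--(5) on $F$, together with standard closure properties of $R$-local nilpotent spaces.

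Conditions (1) and (5) are immediate from hypotheses (1) and (5). For (2), given $X, Y \in U$, hypothesis (2) gives $F(X \times Y) \simeq F(X) \times F(Y)$, a product of $R$-local nilpotent spaces and hence $R$-local. The K\"unneth theorem (applicable since $R$ is a PID or a field) together with the $R$-homology equivalences $X \to F(X)$ and $Y \to F(Y)$ then yields the $R$-homology equivalence $X \times Y \to F(X \times Y)$.

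For (3), let $X \to Y \to Z$ be a homotopy fibration with $Z$ simply connected and $Y, Z \in U$. Hypothesis (3) gives a homotopy fibration $F(X) \to F(Y) \to F(Z)$ whose base $F(Z)$ is simply connected and $R$-local. The long exact sequence on homotopy groups, combined with closure of $R$-local abelian groups under kernels and extensions, shows that $F(X)$ is $R$-local and nilpotent. I would then compare the Eilenberg--Moore spectral sequences
\begin{equation*}
E_2^{*,*} = \mathrm{Tor}^{H^*(Z;R)}_{*,*}\bigl(H^*(Y;R),R\bigr) \Rightarrow H^*(X;R)
\end{equation*}
of the two fibrations, which converge because the bases are simply connected. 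The $R$-cohomology equivalences $Y \to F(Y)$ and $Z \to F(Z)$ induce an isomorphism on $E_2$-pages, hence on the abutments, and the resulting $R$-cohomology equivalence $X \to F(X)$ translates to an $R$-homology equivalence by the universal coefficient theorem.

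For (4), given a convergent tower $\{X_n\}$ of principal fibrations with $X_n \in U$, hypothesis (4) gives $F(\on{holim}_n X_n) \simeq \on{holim}_n F(X_n)$; since $R$-localization preserves the connectivity of the Eilenberg--MacLane fibers, this is again a convergent tower of $R$-local nilpotent spaces with $R$-local nilpotent limit. Applying Lemma \ref{lemm : homology of convergent tower} to both towers, the $R$-homology of the limit is the limit of the $R$-homologies, so the $R$-homology equivalences at finite levels assemble to the desired equivalence on the limit. The main obstacle I foresee is the Eilenberg--Moore comparison in (3): both ensuring convergence and propagating $R$-locality to the fiber rely essentially on the simply-connected-base condition built into Lemma \ref{lemm : nilpotent spaces}.
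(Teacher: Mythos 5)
Your proof is correct and follows essentially the same route as the paper: both apply Lemma \ref{lemm : nilpotent spaces} to the class $U$ of spaces where $X\to F(X)$ is an $R$-localization, and both rest on the K\"unneth isomorphism, the Eilenberg--Moore spectral sequence over a simply connected base, and Lemma \ref{lemm : homology of convergent tower}. The only (cosmetic) difference is that the paper first observes that $L_R$ itself satisfies hypotheses (1)--(5) and then compares $F$ with $L_R$ property by property, whereas you verify the closure properties of $U$ directly; the underlying homological inputs are identical.
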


\begin{proof}
First, we check that $R$-localization satisfy the five properties above. Properties (1), (2) and (5) are straightforward. Property (3) follows from the fact that $R$-localization preserves simply connected spaces and the Eilenberg-Moore spectral sequence for fiber sequences (which converges when the base is simply connected).  It remains to prove property (4). Let $\{X_n\}$ be a tower satisfying the hypothesis. Because of property (3), the localized tower $\{L_R(X_n)\}$ is still a convergent tower. The canonical map
\[\on{holim}_nX_n\to\on{holim}_nL_R(X_n)\]
induces an isomorphism in homology because of Lemma \ref{lemm : homology of convergent tower} and the target is $R$-local because $R$-local spaces are stable under homotopy limits. It follows that the target must be the $R$-localization of the source as desired.

Now, we prove the proposition. Because of the previous paragraph, the class of spaces for which the map $X\to F(X)$ is an $R$-localization satisfies the five conditions of Proposition \ref{prop : equivalence}. So it must contain all of the nilpotent finite type spaces.
\end{proof}

\begin{prop}
Let $X$ be a finite type connected space. Let $X\to\{Y_\alpha\}$ be a fibrant replacement of $X$ in $L^\Z(\cat{Pro(sSet))}$. Then, the map
\[X\to\on{holim}_{\alpha} L_R(Y_\alpha)\]
is equivalent to Bousfield-Kan $R$-completion.
\end{prop}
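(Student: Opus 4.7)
The plan is to mirror the argument of Theorem \ref{theo : BK}, but working in the $R$-cohomological model structure $L^R\cat{Pro(sSet)}$ instead of $L^\Z\cat{Pro(sSet)}$. First I would repeat the reduction already carried out in the proof of Theorem \ref{theo : BK}: applying \cite[Theorem 3.3]{isaksencompletions} together with \cite[Lemma 3.6]{isaksencompletions} and the discussion after \cite[Question 10.4]{isaksencompletions}, I may assume that each $Y_\alpha$ is connected, nilpotent in the classical sense, and of finite type.

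The second step is to show that $X \to \{L_R(Y_\alpha)\}$ is a fibrant replacement of $X$ in $L^R\cat{Pro(sSet)}$, where the left-hand side is the evident composite of the given map and the levelwise $R$-localization. For the weak-equivalence part, the given map $X \to \{Y_\alpha\}$ is a $\Z$-cohomology equivalence and it upgrades to an $R$-cohomology equivalence via the pro-universal-coefficients argument already used in Proposition \ref{prop : equivalence}, relying on the finite generation of the integral cohomology of each $Y_\alpha$. The levelwise map $\{Y_\alpha\} \to \{L_R Y_\alpha\}$ is, by Proposition \ref{prop : localization}, the $R$-localization of each nilpotent finite type $Y_\alpha$, and is therefore an $R$-homology (hence $R$-cohomology) equivalence, which persists after passing to pro-cohomology. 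For the fibrancy part, each $L_R(Y_\alpha)$ is nilpotent $R$-local of finite type, and so — after a strict fibrant replacement in $\cat{Pro(sSet)}$ if needed — the pro-object $\{L_R(Y_\alpha)\}$ has the form characterizing $L^R$-fibrant objects in \cite[Theorem 3.3]{isaksencompletions}, namely a tower of principal fibrations with Eilenberg-MacLane fibers whose homotopy groups are finitely generated $R$-modules.

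The final step is to identify the resulting homotopy limit with $R_\infty(X)$. By the $R$-analogue of \cite[Proposition 7.3]{isaksencompletions}, the tower $\{P_n R_n(X)\}$ is also an $L^R$-fibrant replacement of $X$, and its homotopy limit is identified with $R_\infty(X)$ via Milnor's short exact sequence, compatibly with the natural map from $X$. Since both $\{L_R(Y_\alpha)\}$ and $\{P_n R_n(X)\}$ are $L^R$-fibrant replacements of $X$, they become isomorphic in the homotopy category of the strict model structure on $\cat{Pro(sSet)}$, and therefore their homotopy limits are canonically equivalent under $X$, yielding the chain
\[X \to \on{holim}_\alpha L_R(Y_\alpha) \simeq \on{holim}_n P_n R_n(X) \simeq R_\infty(X).\]

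The main obstacle is the $L^R$-fibrancy verification in the second step: showing that $\{L_R Y_\alpha\}$ is not merely a target of an $R$-cohomology equivalence from $X$ but actually an $L^R$-fibrant replacement. One could bypass this by proving instead that both pro-spaces $\{L_R Y_\alpha\}$ and $\{P_n R_n X\}$ become equivalent in the $L^R$-localization and then showing that for pro-spaces that are levelwise nilpotent of finite type (and $R$-local in the first case) the $L^R$-equivalence can be refined to an equivalence in the strict model structure, which is enough to pass to homotopy limits. Either way, the crux is ensuring compatibility of the naturally constructed map $X \to \on{holim}_\alpha L_R(Y_\alpha)$ with the BK completion map, which is automatic once both are identified as homotopy limits of fibrant replacements in $L^R\cat{Pro(sSet)}$.
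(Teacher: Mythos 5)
Your outline follows the paper's general strategy, but the step you yourself flag as the main obstacle is a genuine gap, and neither of your proposed resolutions closes it. Being levelwise nilpotent, $R$-local and of finite type does not make the pro-space $\{L_R(Y_\alpha)\}$ fibrant in the localized model structure: Isaksen's characterization of fibrant objects (\cite[Theorem 3.3]{isaksencompletions}) is a \emph{global} structure on the cofiltered diagram — each stage must be built from later stages by finitely many principal fibrations with Eilenberg--MacLane fibers — and a strict fibrant replacement of a levelwise-nice pro-object does not produce this structure (already a constant pro-object at a nilpotent $R$-local space with infinitely many nontrivial homotopy groups fails it). Your fallback, refining an $L^R$-equivalence between levelwise nilpotent finite type pro-spaces to a strict equivalence, is exactly the kind of statement that is only available for comparisons between \emph{fibrant} replacements, so it presupposes what you are trying to avoid.

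The paper circumvents this by never claiming $\{L(Y_\alpha)\}$ is fibrant. Instead it applies \cite[Proposition 7.3]{isaksencompletions} to produce a fibrant replacement of $\{L(Y_\alpha)\}$ in the \emph{homological} structure $L_R\cat{Pro(sSet)}$, namely a strict fibrant replacement of the doubly indexed pro-space $\{P_nR_nL(Y_\alpha)\}$; comparing this with the fibrant replacement $\{P_nR_nX\}^f$ of $X$ gives a strict equivalence and hence $R_\infty(X)\simeq\on{holim}_{\alpha,n}P_nR_nL(Y_\alpha)$. The identification with $\on{holim}_\alpha L(Y_\alpha)$ then comes from the separate fact that each $L(Y_\alpha)$, being nilpotent and $R$-local, satisfies $\on{holim}_nP_nR_nL(Y_\alpha)\simeq L(Y_\alpha)$. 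Note also that the paper deliberately passes from the cohomological to the homological localization via Proposition \ref{prop : equivalence}, since Isaksen's Proposition 7.3 (the link to Bousfield--Kan completion) lives on the homological side; your argument, staying in $L^R$, would need an additional justification for the ``$R$-analogue of Proposition 7.3'' you invoke.
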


\begin{proof}
In order to simplify notations, we write $L$ instead of $L_R$. Arguing as in the Proof of Theorem \ref{theo : BK}, we may assume that each of the spaces $Y_\alpha$ is connected. By Proposition \ref{prop : equivalence}, the map
\[X\to\{Y_\alpha\}\]
is a $\Z$-homology equivalence and thus an $R$-homology equivalence, and the map
\[\{Y_\alpha\}\to \{L(Y_\alpha)\}\]
is an $R$-homology equivalence. By \cite[Propsition 7.3]{isaksencompletions}, we can construct a fibrant replacement of $\{L(Y_\alpha)\}$ in $L_R\cat{Pro(sSet)}$ by taking a strictly fibrant replacement of the pro-space $\{P_nR_nL(Y_\alpha)\}$. We denote by by $\{P_nR_nL(Y_\alpha)\}^f$ this strict fibrant replacement. It follows from the discussion above that the composite
\[X\to \{Y_\alpha\}\to \{L(Y_\alpha)\}\to \{P_nR_nL(Y_\alpha)\}\to \{P_nR_nL(Y_\alpha)\}^f\]
is a fibrant replacement of $X$ in $L_R\cat{Pro(sSet)}$. By \cite[Proposition 7.3]{isaksencompletions}, another fibrant replacement of $X$ is simply given by
\[X\to \{P_nR_nX\}^f\]
In particular, there is a weak equivalence in the strict model structure on pro-spaces
\[\{P_nR_nX\}\simeq \{P_nR_nL(Y_\alpha)\}.\]
It follows that the two pro-spaces have the same homotopy limit so we have
\[R_\infty(X)\simeq \on{holim}_{\alpha,n}P_nR_nL(Y_\alpha).\]

Each of the spaces $L(Y_\alpha)$ is a nilpotent $R$-local space so we have, for each $\alpha$ a weak equivalence
\[\on{holim}_nP_nR_nL(Y_\alpha)\simeq L(Y_\alpha).\]
Putting together these two weak equivalences, we find
\[R_\infty(X)\simeq \on{lim}_\alpha L(Y_\alpha)\]
as desired.
\end{proof}

We are now equipped to prove the main theorem of this section.

\begin{theo}\label{theo : localizations}
Let $R$ be a subring of $\Q$ (resp. the ring $\Z_p$). Let $X$ be a connected space of finite type. Then the canonical map
\[X\to\mathbb{R}\map_{\cbring}(\Z^X,R)\]
is equivalent to Bousfield-Kan $R$-completion (resp. $\F_p$-completion).
\end{theo}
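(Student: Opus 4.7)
The plan is to first establish the theorem for finite type nilpotent spaces by verifying the axiomatic criterion of Proposition \ref{prop : localization}, and then to bootstrap to all finite type connected spaces using the proposition immediately preceding the theorem. Define the functor
\[F_R(X) := \mathbb{R}\map_{\cbring}(\Z^X, R)\]
equipped with the natural transformation $X \to F_R(X)$ coming from the unit of the adjunction $X \mapsto \Z^X$. For $R$ a subring of $\Q$ we will invoke Proposition \ref{prop : localization} with the same $R$; for $R = \Z_p$ we will invoke it with $\F_p$, exploiting the fact that $\F_p$-localization of $K(\Z,n)$ is $K(\Z_p,n)$.

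Axioms (1)--(4) of Proposition \ref{prop : localization} follow from bookkeeping already assembled in the paper. Axiom (1) is immediate since $\Z^{(-)}$ preserves weak equivalences. Axiom (2) follows from the Künneth equivalence $\Z^{X\times Y}\simeq \Z^X\otimes^{\mathbb{L}}\Z^Y$ together with the fact that tensor product in $\cbring$ is the homotopy coproduct (Proposition \ref{prop : U preserves hocolim}), so that $\mathbb{R}\map_{\cbring}(-,R)$ turns it into a product. Axiom (3) follows from \cite[Theorem A.1]{toenprobleme} and Proposition \ref{prop : U preserves hocolim}, which together imply $\Z^X\simeq \Z^Y\otimes^{\mathbb{L}}_{\Z^Z}\Z$ for a fibration $X\to Y\to Z$ with simply connected base, so that $\mathbb{R}\map(-,R)$ sends this pushout to a pullback. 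Axiom (4) follows from Lemma \ref{lemm : homology of convergent tower}, which gives $\Z^{\holim_n X_n}\simeq \hocolim_n \Z^{X_n}$, combined again with the fact that $\mathbb{R}\map_{\cbring}(-,R)$ sends homotopy colimits to homotopy limits.

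The heart of the argument is axiom (5). By Proposition \ref{prop : main for EM}, $\Z^{K_n}\simeq \sym^{bin}(\Z[-n])$, and the free-forgetful adjunction $\bin^U\dashv U$ (passing through $\cat{cAb}$) computes
\[\mathbb{R}\map_{\cbring}(\Z^{K_n}, R)\simeq \mathbb{R}\map_{\cat{cAb}}(\Z[-n], R)\simeq K(R,n).\]
The unit map $K(\Z,n)\to K(R,n)$ is the usual $R$-localization for $R$ a subring of $\Q$, and the map $K(\Z,n)\to K(\Z_p,n)$ coincides with the $p$-completion of $K(\Z,n)$, which is exactly the $\F_p$-homology localization. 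Thus Proposition \ref{prop : localization} applies and yields the theorem for finite type nilpotent $X$.

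For a general finite type connected space $X$, let $X\to \{Y_\alpha\}$ be a fibrant replacement in $L^{\Z}\cat{Pro(sSet)}$, which by \cite[Theorem 3.3]{isaksencompletions} and the reductions from the proof of Theorem \ref{theo : BK} may be taken with each $Y_\alpha$ nilpotent of finite type. Since this map is a $\Z$-cohomology equivalence, $\hocolim_\alpha \Z^{Y_\alpha}\to \Z^X$ is a weak equivalence in $\cbring$, hence
\[\mathbb{R}\map_{\cbring}(\Z^X, R)\simeq \holim_\alpha \mathbb{R}\map_{\cbring}(\Z^{Y_\alpha}, R)\simeq \holim_\alpha L_{R'}(Y_\alpha),\]
where $R' = R$ for $R\subset \Q$ and $R' = \F_p$ for $R = \Z_p$, and the last equivalence uses the nilpotent case. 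The preceding proposition identifies this homotopy limit with the Bousfield--Kan $R'$-completion of $X$, which is exactly the statement of the theorem. The one place demanding care is the identification of the target in axiom (5) when $R = \Z_p$, but this amounts to the standard computation of the $p$-completion of Eilenberg--MacLane spaces, so no real obstacle arises.
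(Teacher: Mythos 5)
Your proposal is correct and follows essentially the same route as the paper: reduce to the nilpotent finite type case via the pro-space fibrant replacement $X\to\{Y_\alpha\}$ and the proposition preceding the theorem, then verify the five axioms of Proposition \ref{prop : localization} for $F_R(X)=\mathbb{R}\map_{\cbring}(\Z^X,R)$. You usefully spell out the one point the paper leaves implicit in its phrase ``proved exactly as Theorem \ref{theo : main}'', namely that axiom (5) requires the computation $\mathbb{R}\map_{\cbring}(\Z^{K_n},R)\simeq K(R,n)$ via Proposition \ref{prop : main for EM} and the free--forgetful adjunction, together with the identification of the unit map with the standard map $K(\Z,n)\to K(R,n)$.
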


\begin{proof}
Let $X\to\{Y_\alpha\}$ be a fibrant replacement in $L^\Z\cat{Pro(sSet)}$. Then, we have the following sequence of weak equivalences
\[\R\map(\Z^X,R)\simeq \R\map(\on{hocolim}_\alpha \Z^{Y_\alpha},R)\simeq\on{holim}_\alpha\R\map(\Z^{Y_\alpha},R).\]
so according to the previous proposition, it suffices to show that the map 
\[X\to\R\map(\Z^X,R)\]
is an $R$-localization (resp. $\F_p$-localization) for nilpotent finite type spaces. For this, it suffices to show that the functor
\[X\mapsto \R\map(\Z^X,R)\] 
satisfies the five conditions of Proposition \ref{prop : localization}. But this is proved exactly as Theorem \ref{theo : main}.
\end{proof}

\section{Binomial affine homotopy type}\label{section : binomial homotopy type}

We denote by $\on{Spec}$ the functor from $\bring\op$ to $\Fun(\bring,\cat{Set})$ given by
\[\on{Spec}(A)(R)=\Hom_{\bring}(A,R).\]
More generally, let us consider the category $\Fun(\bring,\cat{sSet})$ of covariant functors from $\bring$ to $\cat{sSet}$. For $A$ an object in $\cbring$, we also denote by $\on{Spec}$ the functor from $\cbring\op\to\Fun(\bring,\cat{sSet})$ given by
\[\on{Spec}(A)(R)=\map_{\cbring}(A,R).\]
It is clear that there is no conflict of notations since $\on{Spec}(A)(R)$ is a constant simplicial set when $A$ is a constant cosimplicial binomial ring.
We denote by $\R\on{Spec}(A)$ the functor
\[R\mapsto \map(A^c,R)\]
for $A^c$ a cofibrant replacement of $A$. 

\begin{prop}\label{prop : ff 1}
The functor $A\mapsto \R\on{Spec}(A)$ from $c\mathcal{BR}ing\op$ to $\Fun(\bring,\mathcal{S})$ is homotopically fully faithful.
\end{prop}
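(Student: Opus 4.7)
The plan is to view $\R\on{Spec}$ as the composite of the $\infty$-Yoneda embedding $y:c\mathcal{BR}ing\op\to\Fun(c\mathcal{BR}ing,\mathcal{S})$, $A\mapsto \map_{c\mathcal{BR}ing}(A,-)$, with the restriction $\iota^*:\Fun(c\mathcal{BR}ing,\mathcal{S})\to\Fun(\bring,\mathcal{S})$ along the inclusion $\iota:\bring\hookrightarrow c\mathcal{BR}ing$ of binomial rings as constant cosimplicial binomial rings. Since $y$ is fully faithful by the $\infty$-Yoneda lemma, it suffices to show that $\iota^*$ is fully faithful on the essential image of $y$. By adjunction with the right Kan extension $\iota_*=\on{Ran}_\iota$, this amounts to the codensity assertion that for every $X\in c\mathcal{BR}ing$ the canonical comparison map
\[X\longrightarrow \lim_{(R,\,X\to R)\in X/\bring} R\]
is an equivalence in $c\mathcal{BR}ing$, where $X/\bring$ denotes the $\infty$-categorical under-category and the limit is the $\infty$-categorical homotopy limit.

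Assuming codensity, full faithfulness follows formally: the Yoneda lemma handles the constant case $A=R$ directly, yielding $\map_\Fun(\R\on{Spec}(R),\R\on{Spec}(B))\simeq \R\on{Spec}(B)(R)=\map_{c\mathcal{BR}ing}(B,R)$ for $R\in\bring$, and the general case is obtained by decomposing $\R\on{Spec}(A)$ as a colimit of representables indexed by $A/\bring$ (co-Yoneda in $\Fun(\bring,\mathcal{S})$) and converting both sides of the comparison map into limits over this same indexing category. To establish codensity, I would use the monadic presentation of Theorem \ref{theo : monadic} to resolve $X$ by a simplicial diagram of free binomial rings $\sym^{bin}(M_\bullet)$, and reduce to the case of the building blocks $\sym^{bin}(\Z[-n])$. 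For $n=0$, $\on{Num}[x]$ is constant and codensity is immediate; for $n\geq 1$, one exploits the identification $\sym^{bin}(\Z[-n])\simeq \Z^{K_n}$ from Proposition \ref{prop : main for EM} together with the explicit description of the derived mapping spaces $\R\map_{c\mathcal{BR}ing}(\Z^{K_n},R)$ furnished by the techniques behind Theorem \ref{theo : main}.

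The principal obstacle is the codensity claim itself. In the naive 1-categorical sense, the limit of constant quotients of a cosimplicial binomial ring $A$ recovers only $\on{eq}(A^0\rightrightarrows A^1)$, which is generally very far from $A$. The crucial point is that $X/\bring$ must be interpreted $\infty$-categorically: its objects are points of the derived mapping spaces $\R\map_{c\mathcal{BR}ing}(X,R)$ and its higher morphisms encode their homotopy coherences, and it is exactly this enriched structure that enables the homotopy limit to reassemble $X$. Tracking these higher coherences through the monadic resolution is the technical heart of the argument.
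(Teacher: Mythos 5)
Your formal reduction is sound, and it is essentially the same skeleton as the paper's argument: the paper constructs an explicit left adjoint $\mathcal{O}$ to $\on{Spec}$ (using the binomial ring structure on $\tG=\on{Spec}(\on{Num}[x])$ in $\Fun(\bring,\cat{Set})$), so that full faithfulness becomes the statement that the derived unit $A\to\L\mathcal{O}\on{Spec}(A)$ is an equivalence — which is exactly your codensity claim $A\xrightarrow{\ \sim\ }\lim_{A/\bring}R$, with the large limit made computable as a derived functor. The genuine gap is in your proposed proof of codensity. The codensity monad is a limit construction (a right Kan extension), so there is no formal reason for it to commute with the $\Delta\op$-indexed homotopy colimits appearing in a monadic bar resolution: resolving $X$ by a simplicial diagram of free binomial rings $\sym^{bin}(M_\bullet)$ and ``reducing to the building blocks'' presupposes that $X\mapsto\lim_{X/\bring}R$ preserves geometric realizations, an unjustified step at least as hard as the statement you are proving. (Decomposing by colimits is legitimate in the \emph{other} variable $B$ of the full-faithfulness statement, but codensity lives in the variable $A$, where it is not.) The same objection applies to the further reduction from $\sym^{bin}(M)$ to $\sym^{bin}(\Z[-n])$. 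And even granting both reductions, the final step does not close: Theorem \ref{theo : main} computes $\R\map_{\cbring}(\Z^{K_n},\Z)$, i.e.\ the $\Z$-points of $\R\on{Spec}(\Z^{K_n})$, which says nothing about the limit $\lim_{\Z^{K_n}/\bring}R$ over maps into \emph{all} binomial rings; it also would at best reach finite-type objects, whereas the proposition concerns all of $c\mathcal{BR}ing$. So the ``technical heart'' you flag at the end is genuinely missing, and the route you sketch for it would fail.

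The idea you are missing is that the right decomposition of $A$ is a \emph{limit} decomposition into constant binomial rings, and it is available tautologically: for $A$ cofibrant the canonical map $A\to\on{holim}_{[n]\in\Delta}A^n$, where each level $A^n$ is viewed as a constant cosimplicial binomial ring, is a weak equivalence, and correspondingly $\on{Spec}(A)\simeq\on{hocolim}_{\Delta\op}\on{Spec}(A^n)$ is a homotopy colimit of representables in $\Fun(\bring,\cat{sSet})$. The left Quillen functor $\mathcal{O}$ sends this colimit to $\on{holim}_{[n]\in\Delta}\mathcal{O}\on{Spec}(A^n)$ (no derivation needed since each $\on{Spec}(A^n)$ is representable, hence cofibrant), and for a constant binomial ring the unit $A^n\to\mathcal{O}\on{Spec}(A^n)$ is an isomorphism by the ordinary Yoneda lemma. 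No bar resolution, no Eilenberg--MacLane computation, and no finite-type hypothesis is needed.
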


\begin{proof}
This is very similar to \cite[Corollaire 2.2.3]{toenchamps}.  We define a \emph{binomial ring structure} on an object $A$ of a category $\cat{C}$ with finite products to be the data of a lift of the functor
\[B\mapsto \Hom_{\cat{C}}(B,A)\]
from $\Fun(\cat{C}\op,\cat{Set})$ to $\Fun(\cat{C}\op,\bring)$. 

Tautologically, the functor
\[A\mapsto \Hom_{\bring}(\on{Num}[x],A)\]
can be lifted from sets to binomial rings. This means that the binomial ring $\on{Num}[x]$ has a binomial ring structure in the category $\bring\op$. The functor $\on{Spec}$ being a product preserving functor from $\bring\op$ to $\Fun(\cbring,\cat{sSet})$, we deduce that $\tG:=\on{Spec}(\on{Num}[x])$ has a canonical binomial ring structure in $\Fun(\bring,\cat{Set})$

This implies that the functor
\[\mathcal{O}:X\mapsto ([n]\mapsto \Hom_{\Fun(\bring,\cat{Set})}(X_n,\tG))\]
lands in $\cbring$ and clearly, this functor is left adjoint to $\on{Spec}$. Now, we observe that the adjunction
\[\mathcal{O}:\Fun(\bring,\cat{sSet})\leftrightarrows \cbring\op:\on{Spec}\]
is a Quillen adjunction when we give $\Fun(\bring,\cat{sSet})$ the projective model structure. So we wish to prove that the unit map
\[A\to \mathbb{L}\mathcal{O}\on{Spec}(A)\]
is a weak equivalence for any cofibrant $A$. For this, we first observe that the canonical map
\[A\to \on{holim}_{[n]\in\Delta}A^n\]
is a weak equivalence. It follows that $\on{Spec}(A)\simeq \on{hocolim}\R\on{Spec}(A_n)$. Now, since $\on{Spec}(A_n)$ is representable, it is in particular cofibrant. It follows that 
\[\mathbb{L}\mathcal{O}\on{Spec}(A)\simeq \on{holim}_{[n]\in\Delta} \mathcal{O}\on{Spec} (A_n)\]
Thus, in order to conclude the proof, it suffices to observe that the map
\[R\to \mathcal{O}\on{Spec}(R).\]
is an isomorphism for $R$ an object of $\bring$. This is simply Yoneda's lemma.
\end{proof}

\begin{rem}\label{rem : tilde}
The object $\tG\in\Fun(\bring,\cat{Grp})$ is simply the binomial version of the additive group~:
\[R\mapsto (R,+).\]
We are reluctant to denote this object by $\mathbb{G}_a$ as the representing object is not the same as the representing object of $\mathbb{G}_a:\cat{Ring}\to\cat{Grp}$. 

More precisely, we may think of binomial rings as affine geometric objects over a deeper base than $\on{Spec}(\Z)$, that we denote by $\on{Spec}(\Z^{bin})$. Then the forgetful functor
\[U:\bring\to\ring\]
should be thought of as the base change $R\mapsto \Z\otimes_{\Z^{bin}}R$. Note that this base change notation does not have any mathematical content, it is simply a rewriting of the forgetful functor. In any case, the object $\mathbb{G}_a$ is not the base change of $\tG$ from $\on{Spec}(\Z^{bin})$ to $\on{Spec}(\Z)$. The latter base change is what To\"en calls the Hilbert additive group in \cite{toenprobleme}.
\end{rem}

We can go one step further and extend $X^{bin}$ to all commutative rings. We denote by $U_!$ the left Kan extension functor along the forgetful functor~:
\[U_!:\Fun(\bring,\cat{sSet})\to\Fun(\ring,\cat{sSet})\]

\begin{prop}\label{prop : ff2}
The functor $U_!$ preserves weak equivalences and the induced functor
\[U_!:\Fun(\bring,\mathcal{S})\to\Fun(\ring,\mathcal{S})\]
is fully faithful.
\end{prop}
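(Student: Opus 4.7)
The plan is to obtain a pointwise formula for $U_!$ using the right adjoint $\bin_U$ of $U$ constructed earlier in the paper. By the colimit formula for a left Kan extension, $U_!(F)(R)$ is the colimit of $F$ over the comma category $(U \downarrow R)$. Using the adjunction $U \dashv \bin_U$, this comma category is canonically isomorphic to the slice $\bring / \bin_U(R)$, which has $\mathrm{id}_{\bin_U(R)}$ as a terminal object. Since a colimit indexed by a category with a terminal object is the value of the diagram at that terminal object, I conclude
\[U_!(F)(R) \;\cong\; F(\bin_U(R)),\]
naturally in $R$ and $F$.

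The first claim of the proposition is then immediate: any levelwise weak equivalence $F \to G$ induces a levelwise weak equivalence $U_!(F)(R) = F(\bin_U(R)) \to G(\bin_U(R)) = U_!(G)(R)$. Moreover the same formula already computes the homotopy left Kan extension, because the inclusion of the terminal object into the indexing category is cofinal and therefore the strict colimit agrees with the homotopy colimit. So no separate derivation of $U_!$ is needed.

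For fully faithfulness, I will invoke the ordinary adjunction $U_! \dashv U^*$, where $U^*$ is restriction along $U$ and visibly preserves all levelwise weak equivalences. Consequently this adjunction descends to one between the underlying $\infty$-categories. Applying the pointwise formula at $U(A)$ gives
\[U^* U_!(F)(A) \;=\; F\bigl(\bin_U(U(A))\bigr),\]
and since $U$ is fully faithful the unit of $U \dashv \bin_U$ is an isomorphism $A \xrightarrow{\cong} \bin_U(U(A))$. Hence the unit of $U_! \dashv U^*$ is a natural equivalence, which is equivalent to the left adjoint $U_!$ being fully faithful.

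The main (and essentially only) step is the pointwise identification above; once it is obtained, both statements of the proposition follow formally from the fact that $U$ is fully faithful with right adjoint $\bin_U$.
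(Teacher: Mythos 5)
Your proof is correct and follows essentially the same route as the paper: identifying $U_!$ with precomposition by $\bin_U$ (you derive this via the comma-category/terminal-object argument, which is exactly the ``standard categorical nonsense'' the paper invokes), deducing preservation of weak equivalences, and reducing full faithfulness of the derived functor to the invertibility of the unit, which follows from $U$ being fully faithful.
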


\begin{proof}
By standard categorical nonsense, this left Kan extension functor is simply given by precomposition with $\bin_U$ the right adjoint functor to $U$ so it obviously preserves weak equivalences. It follows that the adjunction
\[U_!:\Fun(\bring,\cat{sSet})\leftrightarrows \Fun(\ring,\cat{sSet}):U^*\]
is a Quillen adjunction in which both functors preserve weak equivalences. Thus, in order to prove that the derived left adjoint is fully faithful, it suffices to prove that $U_!$ is fully faithful at the $1$-categorical level. This follows immediatly from the fact that $U$ itself is fully faithful.
\end{proof}

\begin{nota}
For $X$ a simplicial set, we write $X^{bin}$ for $\R\on{Spec}(\Z^X)\in\Fun(\bring,\cat{sSet})$. The assignment $X\mapsto X^{bin}$ descends to a functor
\[\mathcal{S}\to\Fun(\bring,\mathcal{S})\]
By Theorem \ref{theo : main} and Proposition \ref{prop : ff 1}, this functor is fully faithful when restricted to spaces that are nilpotent and of finite type. We also denote by $X^{bin}$ the functor $U_!X^{bin}\in\Fun(\ring,\cat{sSet})$. This is not a conflict of notation since, for $R$ a binomial ring, we have $U_!X^{bin}(R)\cong X^{bin}(R)$. Moreover, by Proposition \ref{prop : ff2}, the assignment $X\mapsto X^{bin}$ from $\mathcal{S}$ to $\Fun(\ring,\mathcal{S})$ is also fully faithful when restricted to spaces that are nilpotent and of finite type.
\end{nota}

\begin{rem}
The functor $X^{bin}:\bring\to\cat{sSet}$ is represented by the cosimplicial algebra $\Z^X$. That is, it is given by 
\[R\mapsto \map_{\cbring}(A,R)\]
for $A\to\Z^X$ a cofibrant replacement in $\cbring$. Hence, following To\"en \cite{toenchamps}, we may call it a ``binomial affine stack''. 

The extension of $X^{bin}$ to $\ring$ is given by
\[R\mapsto \map_{\cbring}(A,\bin_U(R))\cong\map_{\cring}(UA,R)\]
but it is almost never an affine stack in the sense of To\"en. This is because the algebra $UA$ is not cofibrant in $\cring$ in general. In fact, if we replace $UA$ be a cofibrant algebra, the resulting functor will be precisely what To\"en calls the ``affinisation'' of the homotopy type $X$ which is denoted by $(X\otimes\Z)^{uni}$ in \cite{toenprobleme}. By \cite[Corollaire 5.1]{toenprobleme}, the only space that is simply connected and of finite type for which $X^{bin}$ coincides with $(X\otimes\Z)^{uni}$ is the point.
\end{rem}

\begin{prop}\label{prop : points of binomial homotopy type}
Let $X$ be a connected space of finite type.
\begin{enumerate}
\item If $R$ is a subring of $\Q$, then the obvious map $X\to X^{bin}(R)$ coincides with Bousfield-Kan $R$-completion.
\item The obvious map $X\to X^{bin}(\Z_p)$ coincides with Bousfield-Kan $p$-completion.
\item If $k$ is a finite field of characteristic $p$, then the map $X\to X^{bin}(k)$ coincides with Bousfield-Kan $p$-completion.
\end{enumerate}

\end{prop}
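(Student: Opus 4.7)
The plan is to deduce parts (1) and (2) directly from Theorem \ref{theo : localizations}, and then to reduce part (3) to part (2) by identifying $\bin_U(k)$ with $\Z_p$.

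For (1), any subring $R \subset \Q$ is torsion-free, and the divisibility axiom defining binomial rings holds automatically in $\Q$ (hence in $R$); the same reasoning shows that $\Z_p \subset \Q_p$ is binomial. Since $U$ is fully faithful, $\bin_U(R) \cong R$ for such $R$, so
\[X^{bin}(R) = X^{bin}(\bin_U(R)) = \R\map_{c\mathcal{BR}ing}(\Z^X, R),\]
and Theorem \ref{theo : localizations} identifies this with Bousfield-Kan $R$-completion. Part (2) is the same argument with $R = \Z_p$, since Bousfield-Kan $\F_p$-completion is exactly $p$-completion.

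For part (3) the key intermediate statement I would establish is the isomorphism of binomial rings $\bin_U(k) \cong \Z_p$ for any finite field $k$ of characteristic $p$. Granting this, $X^{bin}(k) = X^{bin}(\bin_U(k)) = X^{bin}(\Z_p)$, and part (2) concludes. By the $U \dashv \bin_U$ adjunction and Yoneda, the isomorphism $\bin_U(k) \cong \Z_p$ is equivalent to producing a natural bijection
\[\Hom_{\ring}(UB, k) \cong \Hom_{\ring}(UB, \Z_p)\]
in $B \in \bring$, induced by the ring map $\Z_p \twoheadrightarrow \F_p \hookrightarrow k$.

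The first step is to show that every ring map $\varphi: UB \to k$ factors through $\F_p \subset k$: in a binomial ring $B$ the identity is a Frobenius lift at $p$, since the polynomial identity $p\binom{b}{p} = b(b-1)\cdots(b-p+1) \equiv b^p - b \pmod{p\Z[b]}$ forces $b^p \equiv b \pmod{pB}$ for every $b \in B$, whence $\varphi(b)^p = \varphi(b)$ in $k$ and thus $\varphi(b) \in \F_p$. This reduces the claim to $\Hom_{\ring}(UB, \F_p) \cong \Hom_{\ring}(UB, \Z_p)$. Since $U$ preserves colimits, both Hom-functors transform colimits in $\bring$ into limits of sets, so it suffices to check the bijection on free binomial rings $\on{Num}[x_1, \ldots, x_n]$. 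The hard part will be the case $n = 1$: here I would invoke Mahler's theorem, which identifies $\on{Num}[x] \otimes \Z_p$ with the ring $C(\Z_p, \Z_p)$ of continuous $\Z_p$-valued functions on $\Z_p$; reducing mod $p$ gives $\on{Num}[x] \otimes \F_p \cong C(\Z_p, \F_p)$, whose ring maps to $\F_p$ are exactly the evaluations at points of $\Z_p$, matching the evaluation description of $\Hom_{\bring}(\on{Num}[x], \Z_p) = \Z_p$. Taking products in the variables handles the case of more variables, and the main obstacle is the Mahler-theoretic input in this last step.
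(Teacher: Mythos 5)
Your proposal matches the paper's proof: parts (1) and (2) are quoted directly from Theorem \ref{theo : localizations}, and part (3) is reduced to the identification $\bin_U(k)\cong\Z_p$. The only difference is that the paper simply asserts $\bin_U(k)\cong\Z_p$ as a known fact (it follows from the Witt-vector description of $\bin_U$ recalled from \cite{elliottbinomial}), whereas you supply a correct self-contained proof via the Frobenius-fixed-point observation $b^p\equiv b \pmod{pB}$, reduction to free binomial rings, and Mahler's theorem.
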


\begin{proof}
The first two claims are the content of Theorem \ref{theo : localizations}

The proof of (3) simply comes from the fact that for any finite field of characteristic $p$ we have $\bin_U(k)=\Z_p$.
\end{proof}

\section{A binomial Grothendieck-Teichmüller group}

We denote by $\cat{BStack}$ the category $\Fun(\bring,\cat{sSet})$. This category is enriched over itself by the formula
\[\ehom(X,Y)(R)=\map_{\Fun(\bring,\cat{sSet})}(X\times\on{Spec}(R),Y).\]
If $X=Y$ is cofibrant and fibrant, then the object $\End(X)=\ehom(X,X)$ is a functor from $\bring$ to fibrant simplicial monoids. We may apply the homotopy unit functor objectwise and we obtain a functor from $\bring$ to grouplike simplicial monoids that we call $\Aut(X)$. More generally, if $X$ is not cofibrant-fibrant, we denote by $\R\Aut(X)$ the result of applying $\Aut$ to a cofibrant-fibrant replacement of $X$.

\begin{exam}\label{exam : automorphisms of torus}
Recall that $\tG\in\Fun(\bring,\cat{Grp})$ denotes the binomial additive group :
\[R\mapsto (R,+)\] 
Take $X=T^n$ the $n$-dimensional torus. Then, we have $X^{bin}\simeq \on{Spec}(\sym^{bin}(\Z^n[-1]))$. so we have $X^{bin}=K(\tG^n,1)$. For $R$ a binomial ring, we have the following sequence of weak equivalences
\[\pi_0\R\Aut(X^{bin})(R)\cong \pi_0\map_{\cbring}(\sym^{bin}(\Z^n[-1]),\sym^{bin}(\Z^n[-1])\otimes R)^{\times}\cong \mathrm{GL}_n(R) \]
\end{exam}

This construction extends to diagrams $I\to\cat{BStack}$. The category of such diagram is enriched over $\cat{BStack}$ by the following equalizer diagram
\[\ehom(F,G)\to\prod_{i\in I}\ehom(F(i),G(i))\rightrightarrows\prod_{f:i\to j}\ehom(F(i),G(j))\]
In particular, if $F:I\to\cat{BStack}$ is a functor, we may form $\R\Aut(F)$ which is a functor from $\bring$ to grouplike simplicial monoids.

\begin{prop}\label{prop : R-points of RAut}
Let $F:I\to\cat{sSet}$ be a functor taking values in spaces that are connected of finite type. Let $F^{bin}:I\to\cat{BStack}$ the functor obtained by applying the construction $X\mapsto X^{bin}$ objectwise. Then
\begin{enumerate}
\item For $R$ a subring of $\Q$, we have a weak equivalence of simplicial monoids
\[\R\Aut(F^{bin})(R)\simeq \R\Aut(R_\infty(F))\]
\item For $p$ a prime, we have a weak equivalence of simplicial monoids
\[\R\Aut(F^{bin})(\Z_p)\simeq \R\Aut((\F_{p})_{\infty}(F))\]
\end{enumerate}
\end{prop}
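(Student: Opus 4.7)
I focus on (1); part (2) proceeds identically with $\Z_p$ replacing $R$ and Theorem \ref{theo : localizations}(2) identifying the points $X^{bin}(\Z_p)$ with $(\F_p)_\infty X$. The plan is to use the full faithfulness of $\R\on{Spec}$ from Proposition \ref{prop : ff 1} to express $\R\End(F^{bin})(R)$ as a derived mapping space of diagrams of cosimplicial binomial $R$-algebras, and then to apply an $R$-linear variant of Theorem \ref{theo : main} to identify this with $\R\End(R_\infty F)$.

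For the first step, the cartesian closedness of $\cat{BStack}$ together with the fact that $\R\on{Spec}$ carries coproducts in $c\mathcal{BR}ing$ to products yields $X^{bin} \times \on{Spec}(R) \simeq \R\on{Spec}(\Z^X \otimes R)$. Assembling this pointwise over $I$ and invoking the full faithfulness of $\R\on{Spec}$, together with the standard adjunction exchanging $c\mathcal{BR}ing$-morphisms into an $R$-algebra for $R$-algebra morphisms out of a base-change, produces a natural equivalence
\[\R\End(F^{bin})(R) \simeq \R\End_{\Fun(I\op, c\mathcal{BR}ing_{R/})}(\Z^F \otimes R),\]
where $c\mathcal{BR}ing_{R/}$ denotes the $\infty$-category of cosimplicial binomial $R$-algebras.

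The second step is the heart of the proof: one establishes an $R$-linear variant of Theorem \ref{theo : main}, asserting that the left Quillen functor $X \mapsto \Z^X \otimes R$ from $\cat{sSet}$ to $(c\mathcal{BR}ing_{R/})\op$ has, for connected nilpotent finite type spaces $X$, derived unit equivalent to the $R$-localization $X \to R_\infty X$. Combining this adjunction with the fact that $R_\infty Y$ is $R$-local (so that maps into it from $X$ and from $R_\infty X$ coincide) yields, for such $X$ and $Y$,
\[\map_{c\mathcal{BR}ing_{R/}}(\Z^Y \otimes R, \Z^X \otimes R) \simeq \map_{\cat{sSet}}(R_\infty X, R_\infty Y).\]
The proof of this $R$-linear theorem follows the exact blueprint of the paper: the cobar computation of Proposition \ref{prop : cobar of Num} is preserved under tensoring with the flat ring $R$, the Eilenberg-MacLane analysis of Proposition \ref{prop : main for EM} goes through with $\sym^{bin}$ replaced by its $R$-linear analog, and the Postnikov tower induction in the proof of Theorem \ref{theo : main} transports verbatim.

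Combining both steps,
\[\R\End(F^{bin})(R) \simeq \R\End_{\Fun(I, \cat{sSet})}(R_\infty F) = \R\End(R_\infty F),\]
and restricting to connected components of homotopy equivalences gives the desired equivalence of simplicial monoids. The chief obstacle is the second step: although each ingredient of the original proof adapts without surprises thanks to the flatness of $R$, recording the $R$-linear model structure on $c\mathcal{BR}ing_{R/}$ and the $R$-linear cobar and Postnikov computations does require some bookkeeping.
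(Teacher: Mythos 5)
Your argument is correct and, at bottom, rests on the same input as the paper's one-line proof, namely the identification of $X^{bin}(R)$ with Bousfield--Kan $R$-completion (Proposition \ref{prop : points of binomial homotopy type}, i.e.\ Theorem \ref{theo : localizations}); your first step, passing through the full faithfulness of $\R\on{Spec}$ and the base-change adjunction to rewrite $\R\End(F^{bin})(R)$ as an end of mapping spaces $\R\map_{\cbring}(\Z^Y,\Z^X\otimes R)$, is a reasonable way of making the paper's ``immediate consequence'' precise. The genuine difference is your second step: you propose, as ``the heart of the proof'', to re-run the cobar/Eilenberg--MacLane/Postnikov machine over $R$ to obtain an $R$-linear analogue of Theorem \ref{theo : main}. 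That detour is avoidable. For $X$ of finite type and $R$ flat over $\Z$ with finitely generated cohomology in play, the natural map $\Z^X\otimes R\to R^X$ is a quasi-isomorphism, and $R^X$ is the simplicial cotensor of the constant object $R$ by $X$, so the cotensoring axiom of the simplicial model structure gives directly
\[\R\map_{\cbring}(\Z^Y,\Z^X\otimes R)\simeq\R\map_{\cbring}(\Z^Y,R^X)\simeq\map_{\cat{sSet}}\bigl(X,\R\map_{\cbring}(\Z^Y,R)\bigr)\simeq\map_{\cat{sSet}}(X,R_\infty(Y)),\]
the last equivalence being exactly Theorem \ref{theo : localizations}; no new model structure on $R$-algebras and no new computations are needed. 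One further point deserves care in either version: the proposition is stated for connected finite type spaces, not nilpotent ones, so the final identification $\map(X,R_\infty(Y))\simeq\map(R_\infty(X),R_\infty(Y))$ cannot be justified by ``$R_\infty(Y)$ is $R$-local and $X\to R_\infty(X)$ is an $R$-homology equivalence'' (which you invoke and which holds in the nilpotent case); in the general connected case one should route through the pro-space fibrant replacement $X\to\{Y_\alpha\}$ by nilpotent finite type spaces as in the proofs of Theorems \ref{theo : BK} and \ref{theo : localizations}. With these adjustments your proof is a correct expansion of the paper's argument.
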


\begin{proof}
This is an immediate consequence of Proposition \ref{prop : points of binomial homotopy type}.
\end{proof}

\begin{cons}
Let $D_2$ be the little $2$-disks operad. We view $D_2$ as a dendroidal space, i.e. a diagram $\Omega\op\to\cat{sSet}$. Alternatively, we may view $D_2$ as a weak operad in the sense of \cite[Definition 2.8]{horelprofinite} (i.e. a functor from $\Psi\op$ to $\cat{sSet}$ satisfying a Segal condition for $\Psi$ the algebraic theory of single-colored operads). For $R$ a binomial ring, we define
\[\mathrm{GT}(R)=\pi_0(\R\Aut_{\Fun(\Omega\op,\cat{BStack})}(D_2^{bin})(R))\]
This is our version of the Grothendieck-Teichmüller group. Evaluation in arity $2$ induces a natural transformation
\[\R\Aut_{\Fun(\Omega\op,\cat{BStack})}(D_2^{bin})\to\R\Aut_{\cat{BStack}}((S^1)^{bin}).\]
Taking $\pi_0$ and using Example \ref{exam : automorphisms of torus}, this induces a natural transformation of functors from $\bring$ to groups~:
\[\chi : \mathrm{GT}\to\tilde{\mathbb{G}}_m\] 
where $\tilde{\mathbb{G}}_m(R)$ is the binomial multiplicative group given by $R\mapsto (R^{\times},.)$ (see Remark \ref{rem : tilde} for the notation).
\end{cons}

\begin{theo}
The group $\mathrm{GT}$ is related to the pro-$p$ Grothendieck-Teichmüller group $\hat{\mathrm{GT}}_p$ and rational Grothendieck-Teichmüller group $\hat{\mathrm{GT}}_{\Q}$ as follows.
\begin{enumerate}
\item We have $\mathrm{GT}(\Z_p)\cong\hat{\mathrm{GT}}_p$.
\item We have $\mathrm{GT}(\Q)\cong \hat{\mathrm{GT}}_{\Q}$.
\end{enumerate}
Moreover, in case (1) the map $\chi$ induces the pro-$p$ cyclotomic character $\hat{\on{GT}}_p\to\Z_p^\times$ and in case (2) the map $\chi$ induces the rational cyclotomic character $\hat{\on{GT}}_\Q\to\Q^\times$.
\end{theo}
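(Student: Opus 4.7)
The plan is to apply Proposition \ref{prop : R-points of RAut} to the diagram $F = D_2$ viewed as a dendroidal space (equivalently a weak operad), that is, a functor $\Omega\op \to \cat{sSet}$. Since each $D_2(n)$ is homotopy equivalent to the configuration space of $n$ ordered points in the plane, it is a connected space of finite type, so the hypotheses of Proposition \ref{prop : R-points of RAut} are satisfied levelwise (and its diagrammatic version over $\Omega\op$ also holds, since the relevant model-categorical comparisons extend from simplicial sets to diagrams of simplicial sets). This yields weak equivalences of simplicial monoids
\[\R\Aut_{\Fun(\Omega\op,\cat{BStack})}(D_2^{bin})(\Q) \simeq \R\Aut(\Q_\infty D_2), \qquad \R\Aut_{\Fun(\Omega\op,\cat{BStack})}(D_2^{bin})(\Z_p) \simeq \R\Aut((\F_p)_\infty D_2).\]
Taking $\pi_0$, the first two assertions of the theorem reduce to identifying the right-hand sides with $\hat{\mathrm{GT}}_\Q$ and $\hat{\mathrm{GT}}_p$ respectively.

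Both identifications are classical descriptions of the Grothendieck-Teichmüller groups as homotopy automorphism groups of appropriate completions of the little $2$-disks operad. The rational case is a theorem of Fresse: the group of homotopy automorphisms of $D_2 \otimes \Q$ as a (dendroidal/weak) operad is precisely $\hat{\mathrm{GT}}_\Q$. The pro-$p$ case is established in \cite{horelprofinite} using the $p$-profinite completion of $D_2$; since each configuration space is nilpotent and of finite type, this $p$-profinite completion agrees with the Bousfield-Kan $\F_p$-completion used here. A routine check is needed to match the various model-categorical conventions (dendroidal spaces, weak operads, simplicial operads) used in those references with the one in the construction of $\mathrm{GT}$.

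For the cyclotomic character, note that restriction to arity $2$ is induced by the inclusion in $\Omega$ of the $2$-corolla, and $D_2(2) \simeq S^1$. Combining this with Example \ref{exam : automorphisms of torus} in the case $n=1$, which gives $\pi_0\R\Aut((S^1)^{bin})(R) \cong R^\times$, we see that $\chi$ is arity-$2$ evaluation followed by this identification. Under the equivalences of the first paragraph, $\chi(\Z_p)$ and $\chi(\Q)$ correspond to the map that sends an automorphism of the completed operad to the induced automorphism of the fundamental group of $D_2(2)$, which is precisely how the pro-$p$ and rational cyclotomic characters are defined in the respective frameworks.

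The main obstacle is not a new computation but the careful matching of frameworks: one must verify that $\R\Aut_{\Fun(\Omega\op,\cat{BStack})}(D_2^{bin})(R)$ really computes the simplicial monoid of derived operadic automorphisms of the $R$-completion of $D_2$, and that this identification is compatible with the arity-$2$ evaluation used to define $\chi$. Once this matching is in place, both isomorphisms of the theorem and the identification of the cyclotomic character follow formally from the existing operadic descriptions of $\hat{\mathrm{GT}}_p$ and $\hat{\mathrm{GT}}_\Q$.
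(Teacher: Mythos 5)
Your proposal follows essentially the same route as the paper: apply Proposition \ref{prop : R-points of RAut} to $D_2$ viewed as a diagram over $\Omega\op$ to identify $\mathrm{GT}(\Z_p)$ with $\pi_0\R\Aut((\F_p)_\infty D_2)$ and $\mathrm{GT}(\Q)$ with $\pi_0\R\Aut(\Q_\infty D_2)$, then invoke the known operadic descriptions of $\hat{\mathrm{GT}}_p$ (Horel) and $\hat{\mathrm{GT}}_\Q$ (Fresse). Your treatment of the cyclotomic character via arity-$2$ evaluation and Example \ref{exam : automorphisms of torus} is also the intended argument, and in fact spells out a point the paper's proof leaves implicit.
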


\begin{proof}
By Proposition \ref{prop : R-points of RAut}, we find that
\[\on{GT}(\Z_p)\simeq \pi_0\R\Aut((\F_p)_\infty D_2)\]
which according to \cite[Theorem 5.7]{horelautomorphisms} is indeed isomorphic to $\hat{\on{GT}}_p$. Similarly, we have an isomorphism
\[\on{GT}(\Q)\cong \hat{\on{GT}}_\Q\]
by the main theorem of \cite{fressehomotopy}.
\end{proof}

\begin{rem}
We could do a similar construction with the framed little $2$-disks operad. The resulting group will have the same $\Z_p$ points by the main result of \cite{boavidaoperads} (see also \cite[Remark 5.8]{horelautomorphisms}). The $\Q$-points of this group will also be isomorphic to $\hat{\on{GT}}_{\Q}$ although we do not know of a reference for this fact.
\end{rem}

\appendix

\section{Conservativity of the bar construction}\label{section : conservativity bar}

Let $A$ be an augmented differential graded algebra over a commutative ring $R$. We assume that $A$ is flat over $R$. We can form the derived tensor product $R\otimes^{\mathbb{L}}_AR$. This can be explicitly modelled by the bar complex
\[B(A)=T^{c}(s\overline{A},d_A+d')\]
where $\overline{A}$ denotes the augmentation ideal of $A$, $T^c$ denotes the cofree conilpotent coalgebra comonad, $d_A$ is induced by the differential of $A$ and $d'$ is the bar differential induced by the algebra structure. The goal of this section is to study the conservativity of the bar construction functor. We are grateful to Dan Petersen for suggesting the proof of Theorem \ref{conservativity of Bar}.

\begin{prop}
\begin{enumerate}
\item Let $C$ be a chain complex of abelian groups, assume that for all prime $p$, the chain complex $C\otimes_{\Z}^{\mathbb{L}}\F_p$ is acyclic and that $C\otimes_{\Z}^{\mathbb{L}}\Q$ is acyclic. Then $C$ is acyclic.
\item Let $f:C\to D$ be a map of chain complexes of abelian groups. Assume that for all prime $p$, the  map $f\otimes_{\Z}^{\mathbb{L}}\F_p$ is a quasi-isomorphism and the map $f\otimes_{\Z}^{\mathbb{L}}\Q$ is a quasi-isomorphism, then $f$ is a quasi-isomorphism.
\end{enumerate}

\end{prop}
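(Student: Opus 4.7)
The plan is to deduce both statements from the universal coefficient theorem, reducing them to a computation of the homology groups $H_n(C)$ directly.

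For part (1), I would begin by extracting information from each hypothesis separately at the level of $H_n(C)$. Since $\Q$ is flat over $\Z$, the acyclicity of $C\otimes^{\mathbb{L}}_\Z\Q$ gives $H_n(C)\otimes\Q=0$ for every $n$. Next, since $\F_p$ admits the two-term free resolution $0\to\Z\xrightarrow{p}\Z\to\F_p\to 0$, the universal coefficient short exact sequence
\[0\to H_n(C)\otimes\F_p\to H_n(C\otimes^{\mathbb{L}}_\Z\F_p)\to \on{Tor}^{\Z}_1(H_{n-1}(C),\F_p)\to 0\]
combined with the acyclicity of $C\otimes^{\mathbb{L}}_\Z\F_p$ yields $H_n(C)\otimes\F_p=0$ and $H_n(C)[p]=\on{Tor}^{\Z}_1(H_n(C),\F_p)=0$ for every prime $p$ and every $n$.

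Then I would argue as follows. The vanishing of $H_n(C)[p]$ for all primes $p$ implies that $H_n(C)$ is torsion-free: if $x\neq 0$ had finite order $m$ and $q$ were a prime divisor of $m$, then $(m/q)x$ would be a nonzero element killed by $q$, contradicting $H_n(C)[q]=0$. Being torsion-free, $H_n(C)$ embeds in $H_n(C)\otimes\Q=0$, hence $H_n(C)=0$ for all $n$, as desired.

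For part (2), I would form the mapping cone $\on{cone}(f)$ and observe that $\on{cone}(f)\otimes^{\mathbb{L}}_\Z M$ is quasi-isomorphic to $\on{cone}(f\otimes^{\mathbb{L}}_\Z M)$, since both cone and derived tensor product commute in the derived category. Applying this to $M=\F_p$ and $M=\Q$ and invoking the hypotheses, $\on{cone}(f)$ satisfies the hypotheses of (1); hence it is acyclic, and therefore $f$ is a quasi-isomorphism.

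The only mildly delicate point is the passage from "no $p$-torsion for any $p$" to "torsion-free"; everything else is a direct application of the UCT. No boundedness on $C$ is required since the UCT argument is degreewise.
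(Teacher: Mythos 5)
Your proof is correct, but it takes a genuinely different route from the paper's for part (1). You work at the level of homology groups: the universal coefficient theorem converts the two acyclicity hypotheses into $H_n(C)\otimes\Q=0$, $H_n(C)\otimes\F_p=0$ and $\on{Tor}_1^{\Z}(H_n(C),\F_p)=H_n(C)[p]=0$ for all $n$ and $p$, and an elementary argument about abelian groups (no $p$-torsion for every prime $p$ gives torsion-freeness, and a torsion-free group embeds into its rationalization) finishes the job. The paper instead never looks at individual homology groups: it bootstraps from $\F_p=\Z/p$ to $\Z/p^n$ via the extensions $0\to\Z/p^n\to\Z/p^{n+1}\to\Z/p\to 0$, then to $\Z/N$ for all $N$, then to $\Q/\Z$ as a filtered colimit of finite cyclic groups, and finally concludes with the arithmetic fracture sequence $0\to\Z\to\Q\to\Q/\Z\to 0$. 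Your argument is more elementary and self-contained over $\Z$, though it quietly relies on $\Z$ being hereditary so that the universal coefficient sequence is valid for the \emph{derived} tensor product of an arbitrary complex (replace $C$ by a complex of free abelian groups first); the paper's argument has the advantage of transporting verbatim to settings where no such coefficient sequence is available. Part (2) is handled identically in both proofs, by passing to the cone and using that the derived tensor product commutes with cofibers.
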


\begin{proof}
In this proof, we drop the $\mathbb{L}$ superscript from the notation, all tensor products are derived. First, we observe that part (2) of the proposition follows immediately from part (1). Indeed, a map of chain complexes is a quasi-isomorphism if and only if its cofiber is acyclic and taking cofiber commutes with derived tensor product.

We now prove part (1). First, using the short exact sequences
\[0\to\Z/p^n\to\Z/p^{n+1}\to\Z/p\to 0\]
and an obvious inductive argument, we can conclude, that $C\otimes\Z/p^n$ is acyclic for all $n$. It follows that $C\otimes\Z/N$ is acyclic for all integer $N$. Since the group $\Q/\Z$ is a filtered colimit of finite cyclic groups, we deduce that $C\otimes \Q/\Z$ is also acyclic. Finally, the short exact sequence
\[0\to\Z\to\Q\to\Q/\Z\to 0\]
lets us conclude that $C=C\otimes\Z$ is also acyclic.
\end{proof}

We switch from augmented (co)algebras to non-unital (co)algebras (recall that the functor that sends an augmented algebra to its augmentation ideal is an equivalence of categories and similarly for coalgebras). In the rest of this proof, algebra and coalgebras are non-unital and non-counital.

The bar construction functor lands in coalgebras~:
\[B:\cat{dgAlg}\to\cat{dgCoalg}\]
It has a left adjoint called the cobar construction~:
\[\Omega : \cat{dgCoalg}\to\cat{dgAlg}\]

\begin{prop}
The cobar construction sends quasi-isomorphisms between simply coconnected coalgebras (i.e. such that $C_i=0$ for $i\geq -1$) to quasi-isomorphisms.
\end{prop}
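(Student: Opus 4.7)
My plan is to use the tensor-length filtration on the cobar construction. Write $\Omega(C) = \bigoplus_{n \ge 1}(s^{-1}C)^{\otimes n}$ and set $F^p\Omega(C) := \bigoplus_{n \ge p}(s^{-1}C)^{\otimes n}$: this is a decreasing filtration preserved by both pieces of the cobar differential, since the internal part induced by $d_C$ preserves tensor length and the part induced by the coproduct strictly raises it by one. The associated graded at level $p$ is $(s^{-1}C)^{\otimes p}$ equipped only with the internal differential $d_C^{\otimes p}$. The simple coconnectedness assumption $C_i = 0$ for $i \ge -1$ forces $s^{-1}C$ to sit in degrees $\le -3$, hence $(s^{-1}C)^{\otimes p}$ to sit in degrees $\le -3p$. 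In any fixed total degree only finitely many filtration levels contribute, so the filtration is bounded in each degree and the associated spectral sequence converges strongly to $H_*(\Omega(C))$.

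A quasi-isomorphism $f : C \to D$ between simply coconnected coalgebras gives a morphism of filtered complexes, and hence of spectral sequences, which on the $E_0$-page restricts on each piece to $f^{\otimes p}: (s^{-1}C)^{\otimes p} \to (s^{-1}D)^{\otimes p}$. By the spectral sequence comparison theorem combined with strong convergence, it is therefore enough to check that each $f^{\otimes p}$ is a quasi-isomorphism of chain complexes of abelian groups for $p \ge 1$.

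For this Künneth-type step I would invoke the first proposition of the appendix: $f^{\otimes p}$ is a quasi-isomorphism as soon as $f^{\otimes p} \otimes^{\mathbb{L}} \F_\ell$ is a quasi-isomorphism for every prime $\ell$ and $f^{\otimes p} \otimes^{\mathbb{L}} \Q$ is a quasi-isomorphism. After derived base change to a field $k$, this derived tensor power identifies with the $p$-fold tensor product over $k$ of the quasi-isomorphism $f \otimes^{\mathbb{L}} k$, and over a field tensor powers of quasi-isomorphisms are quasi-isomorphisms by the classical Künneth formula. Since $f$ is a quasi-isomorphism over $\Z$, so is $f \otimes^{\mathbb{L}} k$, and the desired conclusion follows.

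The main obstacle I expect is precisely this last step, since over $\Z$ a tensor power of a quasi-isomorphism of chain complexes need not be a quasi-isomorphism in the absence of flatness; the conservativity proposition of the appendix is exactly the tool designed to sidestep this difficulty by reducing to field coefficients. Once the $E_1$-isomorphism is in hand, strong convergence of the spectral sequence delivers that $\Omega(f)$ is itself a quasi-isomorphism.
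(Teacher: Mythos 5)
Your argument is essentially the paper's: the same tensor-length filtration on $\Omega(C)$, the same use of simple coconnectedness to control it (you observe $(s^{-1}C)^{\otimes p}$ lives in degrees $\le -3p$, the paper observes $F_n$ lives in degrees $<-n$), and the same reduction to the associated graded pieces $f^{\otimes p}$. The only packaging difference is that you run the spectral sequence of the filtration with strong convergence from degreewise boundedness, whereas the paper proves that $\Omega(C)\to\lim_n\Omega(C)/F_n\Omega(C)$ is a quasi-isomorphism (via Milnor's sequence and Mittag--Leffler) and then inducts on the finite quotients; these are interchangeable. One wrinkle in your last step: the proposed appeal to the arithmetic fracture proposition does not quite do what you want, because for non-flat complexes the derived base change $(C^{\otimes_{\Z} p})\otimes^{\L}_{\Z}k$ does not identify with the $p$-fold tensor power over $k$ of $C\otimes^{\L}_{\Z}k$ (already $C=\Z/2$, $p=2$, $k=\F_2$ is a counterexample), so the fracture argument does not actually sidestep the flatness issue. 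In the situation where the proposition is used, however, this is moot: the coalgebras arising are either over a field (after the reduction in the conservativity theorem) or of the form $B(A)$ with $A$ flat, hence flat, and then $f^{\otimes p}$ is a quasi-isomorphism directly by the K\"unneth argument, with no fracture needed. The paper leaves this point equally implicit.
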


\begin{proof}
The cobar construction is given by 
\[\Omega(C)= (T(s^{-1}C),d_C+d')\]
where $d'$ is constructed using the coalgebra structure. There is an obvious decreasing filtration on $\Omega(C)$ given by length of tensors
\[F_n(C)=\oplus_{i\geq  n}(s^{-1}C)^{\otimes n}\subset T(s^{-1}C)\]
We claim that if $C$ is simply coconnected, the map
\begin{equation}\label{completeness}
\Omega(C)\to \mathrm{lim}_n\Omega(C)/F_n(\Omega(C))
\end{equation}
is a quasi-isomorphism. For this it suffices to observe that
\begin{enumerate}
\item This limit is in fact a homotopy limit.
\item The obvious map
\[H_i(\mathrm{holim}_n\Omega(C)/F_n(\Omega(C)))\to\mathrm{lim}_nH_i(\Omega(C)/F_n(\Omega(C)))\]
is an isomorphism.
\item For each $i$, the composed map
\[H_i(\Omega(C))\to \mathrm{lim}_nH_i(\Omega(C)/F_n(\Omega(C)))\]
is an isomorphism.
\end{enumerate}

The first claim follows from the fact that the transition maps are all epimorphisms in this tower. The second claim uses the Milnor short exact sequence and the fact that the $\mathrm{lim}^1$-term vanishes by the Mittag-Leffler criterion. For the last claim, we observe that $F_n(\Omega(C))$ is concentrated in homological degrees less than $-n$ by the coconnectedness assumption. It follows that for any $i$, the map
\[H_i(\Omega(C))\to H_i(\Omega(C)/F_n(\Omega(C)))\]
is an isomorphism for $n$ large enough.

Now that we know that the map \ref{completeness} is a quasi-isomorphism, the proposition is easy to prove. Let $f:C\to D$ be a quasi-isomorphism between simply-coconnected coalgebras. It suffices to consider the following commutative square
\[\xymatrix{
\Omega(C)\ar[r]\ar[d]_{\Omega(f)}& \mathrm{lim}_n\Omega(C)/F_n(\Omega(C))\ar[d]\\
\Omega(D)\ar[r]& \mathrm{lim}_n\Omega(C)/F_n(\Omega(D))
}
\]
in which both vertical maps are induced by $f$. All we have to do is prove that the right vertical map is a quasi-isomorphism. Since we know that both limits are in fact homotopy limits, it suffices to prove that for each $n$ the map
\[\Omega(C)/F_n(\Omega(C))\to \Omega(D)/F_n(\Omega(D))\]
is a quasi-isomorphism. By an inductive argument it is enough to show that for all $n$, the map induced by $f$~:
\[F_{n-1}(\Omega(C))/F_n(\Omega(C))\to F_{n-1}(\Omega(C))/F_n(\Omega(C))\]
is a quasi-isomorphism which follows immediately from the fact that $f$ is a quasi-isomorphism.
\end{proof}

\begin{theo}\label{conservativity of Bar}
Let $\mathbb{K}$ denote the ring $\Z$ or a field.
Let $f:A\to A'$ be a map of augmented $0$-coconnected dg-algebras over $\mathbb{K}$. Assume that
\[B(f):B(A)\to B(A')\]
is a quasi-isomorphism. Then $f$ is a quasi-isomorphism.
\end{theo}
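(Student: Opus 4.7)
The plan is to reduce to the case where $\mathbb{K}$ is a field, and then to use the cobar-bar adjunction $\Omega \dashv B$ together with the previous proposition.

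When $\mathbb{K}=\Z$, the first proposition of this appendix reduces the claim to verifying that $f\otimes_{\Z}^{\mathbb{L}}\F_p$ and $f\otimes_{\Z}^{\mathbb{L}}\Q$ are quasi-isomorphisms. Since $A$ and $A'$ are flat over $\Z$, so are all tensor powers of their augmentation ideals; hence the bar complex is flat, and for any commutative ring $\mathbb{K}'$ one has a natural identification
\[
B(A)\otimes_{\Z}^{\mathbb{L}}\mathbb{K}' \simeq B(A\otimes_{\Z}^{\mathbb{L}}\mathbb{K}').
\]
The hypothesis on $B(f)$ therefore descends to the assertion that $B(f\otimes_{\Z}^{\mathbb{L}}\mathbb{K}')$ is a quasi-isomorphism, so once the field case is known the first proposition concludes.

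For the field case, the $0$-coconnectedness of $A$ and $A'$, together with the suspension built into the formula $B(A) = T^{c}(s\overline{A},\,d_A+d')$, implies that $B(A)$ and $B(A')$ are simply coconnected coalgebras in the sense of the previous proposition. Applying that proposition to the quasi-isomorphism $B(f)$ yields that $\Omega(B(f))$ is a quasi-isomorphism. The conclusion then follows from the naturality square
\[
\xymatrix{
\Omega B(A)\ar[r]^-{\Omega B(f)}\ar[d]_{\epsilon_A} & \Omega B(A')\ar[d]^{\epsilon_{A'}}\\
A\ar[r]_-{f} & A'
}
\]
built from the counits of the adjunction: classical Koszul duality over a field provides that $\epsilon_A$ and $\epsilon_{A'}$ are quasi-isomorphisms in the $0$-coconnected setting, proved by a tensor-length filtration and completeness argument on $\Omega B(A)$ directly analogous to the one already carried out for $\Omega(C)$ in the previous proposition. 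Two-out-of-three then forces $f$ to be a quasi-isomorphism.

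The main technical point, and the place where one must be most careful, is verifying that $\epsilon_A$ is a quasi-isomorphism in the coconnective conventions used here. While this is classical Koszul duality, the degree bookkeeping for the filtration and its convergence must mirror the analysis already performed for $\Omega$ in the previous proposition; the reduction step and the two-out-of-three step are by comparison straightforward.
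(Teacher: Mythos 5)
Your proposal is correct and follows essentially the same route as the paper: reduce from $\Z$ to a field via the arithmetic fracture proposition (using flatness so that the bar construction commutes with base change), then apply the cobar proposition to $B(f)$ and conclude via the counit $\Omega B(A)\to A$. The only cosmetic difference is that the paper simply cites Loday--Vallette (Corollary 2.3.4) for the counit being a quasi-isomorphism over a field, where you sketch the classical filtration argument instead.
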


\begin{proof}
Thanks to the first proposition, we may reduce the case of $\Z$ to the case of a field so from now on, we assume that $\mathbb{K}$ is a field.

Thanks to the second proposition, we know that $\Omega (B(f)):\Omega B(A)\to \Omega B(A')$ is a quasi-isomorphism. Finally, the canonical maps $\Omega B(A)\to A$ and $\Omega B(A')\to A'$ are quasi-isomorphisms by \cite[Corollary 2.3.4]{lodayvallette} which concludes the proof.
\end{proof}

\bibliographystyle{abbrv}
\bibliography{bib}

\end{document}